\DeclareRobustCommand{\rchi}{{\mathpalette\irchi\relax}}
\newcommand{\irchi}[2]{\raisebox{\depth}{$#1\chi$}} % inner command, used by \rchi
\newcommand{\mc}{\mathscr}
\newcommand{\f}{\mathbb}
\newcommand{\vf}{\varphi}
\newcommand{\ol}{\overline}
\newcommand{\cu}{\subseteq}
\newcommand{\wt}{\widetilde}
\newcommand{\serie}[1]{\{#1_{n}\}_n}
\newcommand{\ms}[1]{\{#1_{\nn}\}_n}
\newcommand{\mso}[1]{\{#1_{\nn}^\om\}_n}
\newcommand{\GLT}{\sim_{GLT}}
\newcommand{\dacs}[2]{d_{\rm a.c.s.}\left(#1,#2\right)}
\newcommand{\acs}{\xrightarrow{a.c.s.}}
\newcommand{\ve}{\varepsilon}
\newcommand{\om}{\Omega}
\newcommand{\nn}{{\bm n}}
\newcommand{\ii}{{\bm i}}
\newcommand{\jj}{{\bm j}}
\newcommand{\hh}{{\bm h}}
\newcommand{\kk}{{\bm k}}
\newcommand{\uu}{{\bm 1}}
\newcommand{\GLTom}{\sim_{GLT}^\om}
\newcommand{\size}[1]{#1\times #1}
\newcommand{\dno}{d_n^\om}
\newcommand{\nnn}{N(\nn)}
\newcommand{\sdno}{\size{\dno}}
\newcommand{\snnn}{\size{\nnn}}
\newcommand{\es}{\sim_\lambda}
\newcommand{\svs}{\sim_\sigma}
\newcommand{\btheta}{\boldsymbol\theta}
\DeclareMathOperator{\diag}{diag}
\DeclareMathOperator{\ess}{ess}
\DeclareMathOperator{\rk}{rk}
                \newcommand{\bfx}{\mathbf x}
\theoremstyle{definition}
\theoremstyle{plain}
\newtheorem{theorem}{Theorem}[section]
\newtheorem{lemma}{Lemma}[section]
\newtheorem{corollary}{Corollary}[section]
\newtheorem{remark}{Remark}[section]
\title{A systematic approach to reduced GLT}
\author{Giovanni Barbarino}
\begin{document}

\maketitle

\begin{abstract}
	This paper concerns the spectral analysis of matrix-sequences that are generated by the discretization and numerical approximation of partial differential equations (PDEs), in case  the domain is a generic Peano-Jordan measurable set. It is observed that such matrix-sequences often present a spectral symbol, that is a measurable function describing the asymptotic behaviour of the eigenvalues. When the domain is a hypercube, the analysis can be conducted using the theory of generalized locally Toeplitz (GLT) sequences, but in case of generic domain, a new kind of matrix-sequences and theory has to be formalized. We thus introduce the Reduced GLT sequences and symbols, developing in full detail its theory, and presenting some application to finite differences and finite elements discretization for convection-diffusion-reaction differential equations. 
\end{abstract}
%
%\newpage
%\tableofcontents
%\newpage

Partial differential equations (PDEs) are extensively used in physics, engineering and applied sciences in order to model real-world problems. A closed form for the analytical solution of such PDEs is normally not available. It is therefore of fundamental importance to approximate the solution $u$ of a PDE by means of some numerical method.

Despite the differences that allow one to distinguish among the various numerical methods, the principle on which most of them are based is essentially the same: they first discretize the continuous DE by introducing a mesh, characterized by some discretization parameter $n$, and then they compute the corresponding numerical solution 
$u_n$, which will (hopefully) converge in some topology to the solution $u$ of the DE as $n\to\infty$, i.e., as the mesh is progressively refined.
If we consider a linear PDE
\[
\mc Lu = f,
\]
and a linear numerical method, then the actual computation of the numerical solution reduces to solving a linear system
\[
A_nu_n =f_n
\]
whose size $d_n$ diverges with $n$. 

Solving high-dimensional linear   systems in an efficient way   is fundamental to compute accurate solutions in a reasonable time. In   this   direction, it is known that the convergence properties of mainstream iterative solvers, such as multigrid and preconditioned Krylov methods, strongly depend on the spectral features of the matrices to which they are applied.  
The knowledge of the asymptotic distribution of the sequence $\serie A$ is   therefore a useful tool we can use to choose or to design the best solver and method of discretization.

The discretization methods for linear PDEs often lead to sequences of linear systems admitting a so-called  \textit{Spectral Symbol}. It is an entity associated with a matrix-sequence  of increasing size, and it represents the asymptotic distribution of the spectra of the matrices.  
More specifically, given a matrix-sequence $ \serie A $, where $ A_n\in \mathbb C^{d_n\times d_n} $, with $d_n\xrightarrow{n\to\infty}\infty$,  
then we say that $ \serie A $ possesses a spectral symbol $f:D\cu \mathbb R^q\to \mathbb C$, $q\ge 1$, when it satisfies 
\[
\lim_{n\to\infty} \frac{1}{d_n} \sum_{i=1}^{d_n} F(\lambda_i(A_n)) = \frac{1}{l(D)}\int_D F(f(x)) dx
\]
for every continuous function $F:\mathbb C\to \mathbb C$ with compact support. Here $D$ is a measurable set with finite Lebesgue measure $l(D)>0$ and $\lambda_i(A_n)$ are the eigenvalues of $A_n$. In this case we write 
\[ \serie A\sim_\lambda f. \]
In case of Toeplitz or Toeplitz-like matrices, this topic has been the subject of several studies and research, starting from Szegő \cite{szego}, Avram \cite{avram}, Parter \cite{parter},   and   followed by various other authors \cite{tau,BG0,BG,BS,BS2,Tilli1,Tyrty1,Tyrty2,wid,Tyrty4}.
Asymptotic distributions also naturally arise in the theory of orthogonal polynomials, in which case the
zeros of the polynomials are seen as   the   eigenvalues of   appropriate Jacobi matrices \cite{orth1,orth2,orth3,orth4,Tyrty3}.

A powerful set of tools to compute and analyse the symbols comes from 
the theory of generalized locally Toeplitz (GLT) sequences.
It stems from Tilli's work on locally Toeplitz (LT) sequences \cite{Tilli} and from the spectral theory of Toeplitz matrices.
Nowadays, the main and most comprehensive sources in the literature for theory and applications of GLT spaces are the books \cite{GLT-book,GLT-bookII,GLT-bookIII,GLT-bookIV}, in which we can find a careful and complete description of GLT sequences, block GLT sequences, and their respective multivariate versions. In short, the GLT theory enables us to derive  the symbol for large families of matrix-sequences, from simple components. Since the relation linking  the sequences to the so-built symbols turns out to be an isomorphism of spaces, we can denominate the chosen symbol for $\serie A$ as its GLT symbol, and we write
 we write 
\[ \serie A\GLT f. \]

When dealing with Linear PDE such as
\[
\begin{cases}
\mc L(u)(x) = f(x) & x\in [0,1]^d\\
B.C. & x\in \partial([0,1]^d) 
\end{cases}
\]
the discretization methods often lead to sequences of linear systems admitting a GLT symbol with domain $([0,1]\times [-\pi,\pi])^d$, that we can exploit to gain spectral information on our systems and predict the speed rates of several iterative solving algorithms.  Interestingly enough, it has been observed that when the domain $\om$ of $u$ is regular enough, a similar analysis can be conducted.

We already know that there are  well-known  cases of linear PDE on non-rectangular domains. For example, in the context of Finite Elements methods with constant coefficients, the domains of the basis functions can be arbitrary since the main focus is on the values of the bilinear form evaluated on couples of basis functions, so the resulting symbols have domain $[-\pi,\pi]^d$. The case of FE or collocation methods with variable coefficients has been studied on the condition that the physical domain $\om$ can be described by a global geometry function $G : [0, 1] \to \om$, which is invertible and satisfies $G(\partial ([0, 1]^d)) = \partial\om$.

Now we want to explore a more general case, starting from a domain $\om$ with few properties. We consider a bounded $\om$, so that we can operate an affine invertible transformation $l:\f R^d\to \f R^d$ , consisting in the composition of a  translation and a dilation, such that $l^{-1}(\om)=\om'\cu [0,1]^d$. Notice that if $v = u\circ l:\om'\to\f C$, then
\[
\begin{cases}
\mc L(u)(x) = f(x) & x\in \om\\
B.C. & x\in \partial\om
\end{cases}
\to 
\begin{cases}
\mc L(u)(l(y)) = f(l(y)) & y\in \om'\\
\wt{B.C.} & y\in \partial\om' 
\end{cases}
\to 
\begin{cases}
\wt{\mc L}(v)(y) = g(y) & y\in \om'\\
\wt{B.C.} & y\in \partial\om' 
\end{cases}
\]
so we can solve the problem on $\om'$ for $v$, and then compute $u = v\circ l^{-1}$. From now on we will only consider  domains $\om$ contained in $[0,1]^d$, and we work in the restricted euclidean topology and Lebesgue measure $\mu$ of $[0,1]^d$, unless specified differently.  

The analysis will lead us to introduce a variation of the classical GLT sequences, that we call \textit{Reduced GLT} sequences. 
The existence of such a class of sequence has previously been hinted by some authors (\cite{Beck,tyr,glt_1,stesso,tablino}), but has never been fully explored and formalized. In a recent paper \cite{Bianchi}, the authors computed the symbols for  classes of graph sequences having a grid geometry with a uniform local structure in a domain $\Omega\cu [0,1]^d$, actually following a similar reasoning of the ones in this document, but they never fully formalized the class of sequences in use.

The paper is organized as follows. 
In Section \ref{Review} we recall first the multidimensional notation we will be using throughout all the document, and then we report the main concepts and results already present in previous literature, that we will need to develop our new theory. In particular, we remind the concepts of symbol, approximating classes of sequences and multilevel GLT sequences.
Section \ref{characteristic_sequences} is devoted to discussions on the domain $\Omega$ and the grids we use to discretize our problems.
In Section \ref{Res_Exp} we introduce the restriction and expansion operators that we need to generate the Reduced GLT sequences from classical GLT sequences.
 Thanks to the properties of these operators, we will be able to derive a number of preliminary results, that will lead in Section \ref{RedGLT} to the full formalization of the theory of Reduced GLT sequences. 
 The following two Section \ref{SW} and \ref{P1} show how the theory of Reduced GLT can be applied to discretisation of linear PDEs on a generic domain $\Omega$, in case of, respectively, a finite differences discretization, and a finite elements discretization. 
 In the final Section \ref{FW}, we report  further studies that are currently been conducted on other applications for the Reduced GLT sequences, and also a possible generalization. 

\section{Generalized Locally Toeplitz Sequences}\label{Review}

Here we recall the basic notions, results and concepts of multilevel GLT sequences and linked subjects, without going too much into technical details. All the results we report in this section can be found in \cite{GLT-bookII}, altogether with an extensive and complete discussion about the GLT sequences. 

\subsection{Multidimensional Notation}

When dealing with multilevel sequences, matrices and vectors, we will use the multi-index notation.
A multi-index $\ii\in\mathbb N^d$, also called a $d$-index, is simply a vector in $\mathbb N^d$; its components are denoted by $i_1,\ldots,i_d$.

\begin{itemize}
	
	\item $\bm 0,\,\uu,\,\mathbf2,\,\ldots$ are the vectors of all zeros, all ones, all twos, $\ldots$ (their size will be clear from the context).
	
	\item For any $d$-index $\nn$, $N(\nn)=\prod_{j=1}^dn_j$ and $\nn\to\infty$ means that $\min(\nn)=\min_{j=1,\ldots,d}n_j\to\infty$.
	
	\item If $\bm h,\bm k$ are $d$-indices, $\bm h\le\bm k$ means that $h_r\le k_r$ for all $r=1,\ldots,d$, while $\bm h\not\le\bm k$ means that $h_r>k_r$ for at least one $r\in\{1,\ldots,d\}$.
	
	\item If $\bm h,\bm k\in\mathbb N^d$ are multi-indices, $\bm h\preceq\bm k$ means that $\bm h$ precedes (or equals) $\bm k$ in the lexicographic ordering (which is a total ordering on $\mathbb N^d$). 
	\item The multi-index range $\bm h,\ldots,\bm k$ is the set $\{\jj\in\mathbb Z^d:\,\bm h\le\jj\le \bm k\}$. 
	We assume for the multi-index range $\bm h,\ldots,\bm k$ the standard lexicographic ordering:
	\begin{equation}\label{ordering}
	\left[\ \ldots\ \left[\ \left[\ (j_1,\ldots,j_d)\ \right]_{j_d=h_d,\ldots,k_d}\ \right]_{j_{d-1}=h_{d-1},\ldots,k_{d-1}}\ \ldots\ \right]_{j_1=h_1,\ldots,k_1}.
	\end{equation}
	For instance, in the case $d=2$ the ordering is
	\begin{align*}
	&(h_1,h_2),\,(h_1,h_2+1),\,\ldots,\,(h_1,k_2),\,(h_1+1,h_2),\,(h_1+1,h_2+1),\,\ldots,\,(h_1+1,k_2),\\
	&\ldots\,\ldots,\,(k_1,h_2),\,(k_1,h_2+1),\,\ldots,\,(k_1,k_2).
	\end{align*}
	
	\item When a $d$-index $\jj$ varies over a multi-index range $\bm h,\ldots,\bm k$ (this is sometimes written as $\jj=\bm h,\ldots,\bm k$), it is understood that $\jj$ varies from $\bm h$ to $\bm k$ following the specific ordering \eqref{ordering}. For instance, if $\nn\in\mathbb N^d$ and if we write $\bm x=[x_\ii]_{\ii=\uu}^\nn$, then $\bm x$ is a vector of size $N(\nn)$ whose components $x_\ii,\ \ii=\uu,\ldots,\nn$, are ordered in accordance with \eqref{ordering}: the first component is $x_\uu=x_{(1,\ldots,1,1)}$, the second component is $x_{(1,\ldots,1,2)}$, and so on until the last component, which is $x_\nn=x_{(m_1,\ldots,m_d)}$. Similarly, if  $X=[x_{\ii\jj}]_{\ii,\jj=\uu}^{\nn}$, then $X$ is a $N(\nn)\times N(\nn)$ matrix whose components are indexed by two $d$-indices $\ii,\jj$, both varying from $\uu$ to $\nn$ according to the lexicographic ordering \eqref{ordering}.
	\item Given $\bm h,\bm k\in\mathbb N^d$ with $\bm h\le\bm k$, the notation $\sum_{\jj=\bm h}^{\bm k}$ indicates the summation over all $\jj$ in $\bm h,\ldots,\bm k$.
	\item If $\bm h$ is a $d$-index in the range $\uu,\dots,\nn$, 
	then 
	\begin{align*}
	|\bm h| :&= h_d + n_d(h_{d-1} -1 + n_{d-1}( h_{d-2}-1
	+ \dots +n_3(h_2-1 + n_2(h_1-1)))\dots)
	\\ 
	& = h_d +  \sum_{i=1}^{d-1} \left( (h_i-1)\prod_{j=i+1}^{d} n_j   \right)
	\end{align*}
	maps the $d$-indices to the set $\{1,2,\dots,N(\nn)\}$, and the map is increasing with respect to the lexicographic ordering, since $\bm h\succeq \bm k\iff |\bm h|\ge |\bm k|$. 
	\item Operations involving $d$-indices that have no meaning in the vector space $\mathbb Z^d$ must always be interpreted in the componentwise sense. For instance,  $\ii\jj=(i_1j_1,\ldots,i_dj_d)$, $\ii/\jj=(i_1/j_1,\ldots,i_d/j_d)$, etc.
\end{itemize}

In this context, by a sequence of matrices (or matrix-sequence) we mean a sequence of the form $\{A_\nn\}_n$, where  $\nn=(n_1,\dots,n_d)$ depends on $n$ and $\nn\to\infty$ as $n\to\infty$. In many cases, it is natural to assume that $\nn+\uu = n{\bm \nu}$, where $\bm \nu$ is a vector of rational constants and $n$ diverges to infinity. It is always understood that a matrix $A_\nn$ parameterized by a $d$-index $\nn$ has dimension $N(\nn) = n_1\cdot\dots\cdot n_d$; its entries will be indexed by two $d$-indices $\ii,\jj$.

\subsection{Singular Values Symbol and Approximating Classes of Sequences}

Along with the concept of spectral symbol already introduced, we need to recall the notion of \textit{Singular Values Symbol}, that is, a measurable function describing the asymptotic distribution of the singular values of a matrix-sequence. Given a multilevel sequence $\ms A$, a singular value symbol associated with $\ms A$ is a measurable function $f:D\cu \mathbb R^q\to \mathbb C$ satisfying 
\[
\lim_{n\to\infty} \frac{1}{N(\nn)} \sum_{i=1}^{N(\nn)} F(\sigma_i(A_\nn)) = \frac{1}{l(D)}\int_D F(|f(x)|)dx
\]
for every continuous function $F:\mathbb R\to \mathbb C$ with compact support, where $D$ is a measurable set with finite Lebesgue measure $l(D)>0$ and $\sigma_i(A_\nn)$ are the singular values of $A_n$. In this case we write 
\[ \ms A\sim_\sigma f. \]

A sequence of matrices $\{Z_\nn\}_n$ such that $\{Z_\nn\}_n\sim_\sigma0$ is referred to as a zero-distributed sequence. In other words, $\{Z_\nn\}_n$ is zero-distributed iff
\[ \lim_{n\to\infty}\frac1{N(\nn)}\sum_{i=1}^{N(\nn)}F(\sigma_i(Z_\nn))=F(0),\qquad\forall\,F\in C_c(\mathbb R), \]
where $N(\nn)$ is the size of $Z_\nn$.
Given a sequence of matrices $\{Z_\nn\}_n$, with $Z_\nn$ of size $N(\nn)$, the following properties hold. In what follows, we use the natural convention $C/\infty=0$ for all numbers $C$.
\begin{enumerate}[leftmargin=23pt]
	\item[\textbf{Z\,1.}] $\{Z_\nn\}_n\sim_\sigma0$ iff $Z_\nn=R_\nn+N_\nn$ with $\displaystyle\lim_{n\to\infty}(N(\nn))^{-1}{\rm rank}(R_\nn)=\lim_{n\to\infty}\|N_\nn\|=0$.
	\item[\textbf{Z\,2.}] $\{Z_\nn\}_n\sim_\sigma0$ if there exists a $p\in[1,\infty]$ such that $\displaystyle\lim_{n\to\infty}(N(\nn))^{-1/p}\|Z_\nn\|_p=0$.
\end{enumerate}

The space of matrix-sequences also presents  a metric structure, induced by a distance inspired from the concept of \textit{Approximating Class
of Sequences} (a.c.s.). In fact, Given a sequence of matrix-sequences $\{B_{\nn,m}\}_n$, it is said to be an a.c.s. for $\ms A$ if there exist $\{N_{\nn,m}\}_n$ and $\{R_{\nn,m}\}_n$ such that for every $m$ there exists $n_m$ with
\[
A_\nn = B_{\nn,m}  + N_{\nn,m} + R_{\nn,m}, \qquad \|N_{\nn,m}\|\le \omega(m), \qquad \rk(R_{\nn,m})\le N(\nn)c(m)
\]
for every $n>n_m$, and
\[
\omega(m)\xrightarrow{m\to \infty} 0,\qquad c(m)\xrightarrow{m\to \infty} 0.
\]
In this case, we say that $\{B_{\nn,m}\}_n$ is a.c.s. convergent to $\ms A$, and we use the notation $\{B_{\nn,m}\}_n\acs \ms A$.
In other words, $\{B_{\nn,m}\}_n$ converges to $\ms A$ if the difference $\{A_\nn -B_{\nn,m}\}_n$ can be decomposed into $\{N_{\nn,m}\}_n$ of 'small norm' and $\{R_{\nn,m}\}_n$ of 'small rank'. 

We say that  a sequence $\ms A$ is sparsely unbounded (s.u.), whenever the rate of diverging singular values goes to zero. This happens, for example, whenever the sequence admits a singular value symbol. Using this notion, we can enunciate the property of the a.c.s. we will need in the document. 
\begin{enumerate}[leftmargin=39pt]
	\item[\textbf{ACS\,1.}] $\{A_\nn\}_n\sim_\sigma f$ iff there exist sequences of matrices $\{B_{\nn,m}\}_n\sim_\sigma f_m$ such that $\{B_{\nn,m}\}_n\stackrel{\rm a.c.s.}{\longrightarrow}\{A_\nn\}_n$ and $f_m\to f$ in measure.
	\item[\textbf{ACS\,2.}] Suppose each $A_\nn$ is Hermitian. Then, $\{A_\nn\}_n\sim_\lambda f$ iff there exist sequences of Hermitian matrices $\{B_{\nn,m}\}_n\sim_\lambda f_m$ such that $\{B_{\nn,m}\}_n\stackrel{\rm a.c.s.}{\longrightarrow}\{A_\nn\}_n$ and $f_m\to f$ in measure.
	\item[\textbf{ACS\,3.}] If $\{B_{\nn,m}\}_n\stackrel{\rm a.c.s.}{\longrightarrow}\{A_\nn\}_n$ and $\{B_{\nn,m}'\}_n\stackrel{\rm a.c.s.}{\longrightarrow}\{A_\nn'\}_n$, with $A_\nn$ and $A_\nn'$ of the same size $N(\nn)$, then
	\begin{itemize}[leftmargin=*]
		\item $\{B_{\nn,m}^*\}_n\stackrel{\rm a.c.s.}{\longrightarrow}\{A_\nn^*\}_n$,
		\item $\{\alpha B_{\nn,m}+\beta B_{\nn,m}'\}_n\stackrel{\rm a.c.s.}{\longrightarrow}\{\alpha A_\nn+\beta A_\nn'\}_n$ for all $\alpha,\beta\in\mathbb C$,
		\item $\{B_{\nn,m}B_{\nn,m}'\}_n\stackrel{\rm a.c.s.}{\longrightarrow}\{A_\nn A_\nn'\}_n$ whenever $\{A_\nn\}_n,\{A_\nn'\}_n$ are s.u.,
		\item $\{B_{\nn,m}C_\nn\}_n\stackrel{\rm a.c.s.}\longrightarrow\{A_\nn C_\nn\}_n$ whenever $\{C_\nn\}_n$ is s.u.
	\end{itemize}
\item[\textbf{ACS\,4.}] Let $p\in[1,\infty]$ and assume for each $m$ there is $n_m$ such that, for $n\ge n_m$, $\|A_\nn-B_{\nn,m}\|_p\le\varepsilon(m,n)(N(\nn))^{1/p}$, where $\lim_{m\to\infty}\limsup_{n\to\infty}\varepsilon(m,n)=0$.
Then $\{B_{\nn,m}\}_n\stackrel{\rm a.c.s.}{\longrightarrow}\{A_\nn\}_n$.
\end{enumerate}

It turns out that the notion of a.c.s.\ begets a metric structure on the space of sequences $\mathscr E$. 
The distance
\[
\dacs{\ms{A}}{\ms{B}} = \limsup_{n\to \infty} p(A_\nn-B_\nn), \qquad
p(C_\nn)= \min_{i=1,\dots,N(\nn)+1}\left\{ \frac{i-1}{N(\nn)} + \sigma_i(C_\nn) \right\}
\]
where, by convention, $\sigma_{N(\nn)+1}(C_\nn)=0$, 
has been proved to induce the a.c.s. convergence between sequences.
Moreover,  $d_{\rm a.c.s.}(\{A_\nn \}_n,\{B_\nn \}_n)=0$ iff $\{A_\nn -B_\nn \}_n$ is zero-distributed, and $d_{\rm a.c.s.}$ turns $\,\mathscr E$ into a complete pseudometric space $(\mathscr E,d_{\rm a.c.s.} )$ where the statement ``$\{\{B_{\nn,m}\}_n\}_m$ converges to $\{A_\nn \}_n$'' is equivalent to ``$\{\{B_{\nn,m}\}_n\}_m$ is an a.c.s.\ for $\{A_\nn \}_n$''. In particular, we can reformulate the definition of a.c.s.\ in the following way: {\em a sequence of sequences of matrices $\{\{B_{\nn,m}\}_n\}_m$ is said to be an a.c.s.\ for $\{A_\nn \}_n$ if $\{B_{\nn,m}\}_n$ converges to $\{A_\nn \}_n$ in $(\mathscr E,d_{\rm a.c.s.} )$ as $m\to\infty$, i.e., if $d_{\rm a.c.s.} (\{B_{\nn,m}\}_n,\{A_\nn \}_n)\to0$ as $m\to\infty$.} The theory of a.c.s.\ may then be interpreted as an approximation theory for sequences of matrices, and for this reason we will use the convergence notation $\{B_{\nn,m}\}_n\stackrel{\rm a.c.s.}{\longrightarrow}\{A_\nn \}_n$ to indicate that $\{\{B_{\nn,m}\}_n\}_m$ is an a.c.s.\ for $\{A_\nn \}_n$. 

In view of what follows, let $D\subset\mathbb R^k$ be a measurable set such that $0<\mu_k(D)<\infty$ and define $\mc M_D$ the space of measurable functions over $D$. If
	\[
p_{\rm mea}(f) := \inf_{L\ge 0} \left\{ \frac{\ell_d\set{x\in D | |f|> L}}{\ell_d(D)} + L  \right\},
\]
	\[
d_{\rm mea}(f,g) = p_{\rm mea}(f-g),
\]
then $d_{\rm mea} $ is a distance on $\mc M_D$ such that $d_{\rm mea} (f,g)=0$ iff $f=g$ a.e.; moreover, $d_{\rm mea} $ turns $\mc M_D$ into a complete pseudometric space $(\mc M_D,d_{\rm mea} )$ where the statement ``$f_m$ converges to $f$'' is equivalent to ``$f_m$ converges to $f$ in measure''.

\subsection{Multilevel GLT}

We now recall the theory of the multilevel generalized locally Toeplitz (GLT) sequences and symbols. 
A $d$-level GLT sequence $\{A_\nn\}_n$ is a special $d$-level matrix-sequence equipped with a measurable function $\kappa:[0,1]^d\times[-\pi,\pi]^d\to\mathbb C$, the so-called GLT symbol. Unless otherwise specified, the notation $\{A_\nn\}_n\GLT\kappa$ means that $\{A_\nn\}_n$ is a $d$-level GLT sequence with symbol $\kappa$.
The symbol of a $d$-level GLT sequence is unique in the sense that if $\{A_\nn\}_n\GLT\kappa$ and $\{A_\nn\}_n\GLT\xi$ then $\kappa=\xi$ a.e.\ in $[0,1]^d\times[-\pi,\pi]^d$; conversely, if $\{A_\nn\}_n\GLT\kappa$ and $\kappa=\xi$ a.e.\ in $[0,1]^d\times[-\pi,\pi]^d$ then $\{A_\nn\}_n\GLT\xi$.
We report all the main properties of the GLT space summarized in 9 points.
\begin{enumerate} 
	\item[\textbf{GLT 1.}] If $\{A_\nn\}_n\sim_{\rm GLT}\kappa$ then $\{A_\nn\}_n\sim_{\sigma}\kappa$. If $\{A_\nn\}_n\sim_{\rm GLT}\kappa$ and each $A_\nn$ is normal, then $\{A_\nn\}_n\sim_{\lambda}\kappa$.
	\item[\textbf{GLT 2.}] If $\{A_\nn\}_n\sim_{\rm GLT}\kappa$ and $A_\nn=X_\nn+Y_\nn$, where
	\begin{itemize}
		\item every $X_\nn$ is Hermitian,
		\item $N(\nn)^{-1/2}\|Y_\nn\|_2\to 0$,
	\end{itemize}
	then $\{A_\nn\}_n\sim_{\lambda}\kappa$.
	\item[\textbf{GLT 3.}] Here we list three important examples of GLT sequences. 
	\begin{itemize}
		\item Given a function $f$ in $L^1([-\pi,\pi]^q)$, its associated Toeplitz sequence is $\{T_\nn(f)\}_n$, where the elements are multidimensional Fourier coefficients of $f$:
		\[ T_\nn( f ) = [ f_{\ii-\jj} ]^\nn_{\ii, \jj={\bf 1}}, \qquad f_\kk = \frac{1}{(2\pi)^q} \int_{-\pi}^{\pi} f(\btheta) e^{-\text i \kk\cdot\btheta} {{\rm d}}\theta. \]
		$\{T_\nn(f)\}_n$ is a GLT sequence with symbol $\kappa(\bfx,\btheta)=f(\btheta)$.
		\item Given an almost everywhere continuous function, $a:[0,1]^q\to\mathbb C$, its associated diagonal sampling sequence $\{D_\nn(a)\}_n$ is defined as
		\[ D_\nn(a) = \text{diag}\left(\left\{a\left(\frac \ii\nn\right) \right\}_{\ii=1}^\nn\right). \]
		$\{D_\nn(a)\}_n$  is a GLT sequence with symbol $\kappa(\bfx,\btheta)=a(\bfx)$.
		\item Any zero-distributed sequence $\{Z_\nn\}_n\svs0$ is a GLT sequence with symbol $\kappa(\bfx,\btheta)=0$ .
	\end{itemize}
	\item[\textbf{GLT 4.}] If $\{A_\nn\}_n\sim_{\rm GLT}\kappa$ and $\{B_\nn\}_n\sim_{\rm GLT}\xi$, then
	\begin{itemize}
		\item $\{A_\nn^H\}_n\sim_{\rm GLT}\overline\kappa$, where $A_\nn^H$ is the conjugate transpose of $A_\nn$,
		\item $\{\alpha A_\nn+\beta B_\nn\}_n\sim_{\rm GLT}\alpha\kappa+\beta\xi$ for all $\alpha,\beta\in\mathbb C$,
		\item $\{A_\nn B_\nn\}_n\sim_{\rm GLT}\kappa\xi$.
	\end{itemize}
	\item[\textbf{GLT 5.}] If $\{A_\nn\}_n\sim_{\rm GLT}\kappa$ and $\kappa\ne0$ a.e., then $\{A_\nn^\dag\}_n\sim_{\rm GLT}\kappa^{-1}$, where $A_\nn^\dag$ is the Moore--Penrose pseudoinverse of $A_\nn$. 
	\item[\textbf{GLT 6.}] If $\{A_\nn\}_n\sim_{\rm GLT}\kappa$ and each $A_\nn$ is normal, then $\{f(A_\nn)\}_n\sim_{\rm GLT}f(\kappa)$ for all continuous functions $f:\mathbb C\to\mathbb C$.
	\item[\textbf{GLT 7.}] $\{A_\nn\}_n\sim_{\rm GLT}\kappa$ if and only if there exist GLT sequences $\{B_{\nn,m}\}_n\sim_{\rm GLT}\kappa_m$ such that $\kappa_m$ converges to $\kappa$ in measure and $\{B_{\nn,m}\}_n\xrightarrow{\text{a.c.s.}}\{A_\nn\}_n$ as $m\to\infty$.
\item[\textbf{GLT\,8.}] If $\{A_\nn\}_n\sim_{\rm GLT}\kappa$ and $\{B_\nn\}_n\sim_{\rm GLT}\xi$ then $d_{\rm a.c.s.}(\{A_\nn\}_n,\{B_\nn\}_n)=d_{\rm mea}(\kappa,\xi)$.
	\item[\textbf{GLT 9.}]  
	For any measurable function $\kappa:[0,1]^d\times[-\pi,\pi]^d\to\mathbb C$ there exists a $d$-level sequence $\ms A$ 
	and functions  $a_{i,m},f_{i,m},\ i=1,\ldots,N_m$, such~that
	\begin{itemize}
		\item $\ms A\GLT \kappa$,
		\item $a_{i,m}\in C^\infty([0,1]^q)$ and $f_{i,m}$ is a trigonometric polynomial in $q$ variables,
		\item $\sum_{i=1}^{N_m}a_{i,m}(\bfx)f_{i,m}(\btheta)$ converges to $\kappa(\bfx,\btheta)$ a.e.,
		\item  $\bigl\{\sum_{i=1}^{N_m}D_{\nn}(a_{i,m})T_{\nn}(f_{i,m})\bigr\}_n\xrightarrow{\rm a.c.s.}\{A_\nn\}_n$ as $m\to\infty$. 
	\end{itemize}
\end{enumerate}

$ $\\
\noindent A similar scheme can be found in \cite{GLT-bookII}, where all the points are the same, except for \textbf{GLT1}, \textbf{GLT2} and \textbf{GLT6}, that can be deduced from the results in \cite{normal} and \cite{perturbation}. Moreover,
\textbf{GLT8} has been substituted with its more powerful version from \cite{GLT-bookIV}  and \textbf{GLT9} has been expanded to include the fact that every measurable function is a GLT symbol for some sequence. 

In the applications, one usually identifies the matrix-sequence at hand as a combination or limit of the simpler sequences in \textbf{GLT3}, for which a symbol is already known. Using the algebraic properties of \textbf{GLT4}, \textbf{GLT5} and \textbf{GLT6}, or the metric property of \textbf{GLT7}, one can compute the GLT symbol of the sequence, that is automatically a singular value symbol by \textbf{GLT1}. Eventually, using the perturbation result in \textbf{GLT2}, one can prove that the GLT symbol is also a spectral symbol.

\section{Characteristic Sequences}\label{characteristic_sequences}

We know by \textbf{GLT9} that every measurable function with support in $([0,1]\times[-\pi,\pi])^d$ is a GLT symbol for a sequence of matrices.
Using this connection, we can associate to each $\om$ a diagonal sequence $\ms{D}$ such that $\ms{D}\GLT \rchi_\om$.

An important remark to be noted here is that we do not have a single choice of domain, functions and sequence. In fact two measurable sets $\om,\om'$ are identified whenever they differ for a negligible set, and it happens if and only if $\rchi_\om$ and $\rchi_{\om'}$ differ on the same negligible set. Moreover, two sequences have the same GLT symbol if and only if they differ by a zero-distributed sequence by \textbf{GLT3} and \textbf{GLT4}. 

In the case of characteristic function, though, it is always possible to choose  $\ms{D}$ to be diagonal sequences with binary entries. This is easy to see in the case the characteristic function  $\rchi_\om$ is continuous almost everywhere, since we know from \textbf{GLT3}  that
\[
\{ D_\nn(\rchi_\om)  \}_n\GLT \rchi_\om.
\]
In the remaining cases, one can obtain $\rchi_\om$ as limit of characteristic functions of regular domains, so it is possible to reach the same conclusion using a diagonal argument.

Let us focus on the case $\rchi_\om$ is continuous a.e.,  that is a condition common to almost every domain used in linear PDE.  Given a  set $\om$, the following assertions are equivalent:
\begin{itemize}
	\item the function $\rchi_\om$ is continuous a.e.,
	\item the function $\rchi_\om$ is Riemann integrable,
	\item $\mu(\partial \om) =0$,
	\item the set $\om$ is Peano-Jordan measurable.
\end{itemize}
Moreover,  every measurable set $\om$ respecting the condition, is equal, up to a negligible set, to its interior $\om^\circ$ and to its closure $\ol\om$. 
The matrices $D_{\nn}(\rchi_\om)$ give us a natural way to link its rows and columns to the points of  type $z_{\ii}:=\frac{\bm i}{\bm{n}}$ with $\bm 1\le \bm i\le \nn$ inside and outside of $\om$.
A Peano-Jordan measurable set $\om$ is also well described by the diagonal matrices $D_\nn(\rchi_\om)$, and consequently by the points $z_\ii$, in the sense described by the following result.
\begin{lemma}\label{dimension}
	If $\om$ is a Peano-Jordan measurable set, then
	\[
	\lim_{n\to\infty} \frac{\rk(D_{\nn}(\rchi_\om)) }{N(\nn)} = \mu(\om).
	\]	
\end{lemma}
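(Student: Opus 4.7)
The matrix $D_\nn(\rchi_\om)$ is diagonal with entries $\rchi_\om(\ii/\nn)\in\{0,1\}$, so its rank equals the number of entries equal to $1$:
\[
\rk(D_\nn(\rchi_\om)) = \#\{\ii : \uu\le \ii\le \nn,\ \ii/\nn\in\om\} = \sum_{\ii=\uu}^{\nn} \rchi_\om(\ii/\nn).
\]
Dividing by $N(\nn)$, the quantity we need to study is recognized as a Riemann sum of $\rchi_\om$ on $[0,1]^d$ taken over the equispaced tensor grid $\{\ii/\nn\}$. The plan is thus to identify the limit of this Riemann sum with $\int_{[0,1]^d}\rchi_\om(x)\,dx=\mu(\om)$.

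Since $\om$ is Peano--Jordan measurable, $\rchi_\om$ is Riemann integrable on $[0,1]^d$, i.e., for every $\ve>0$ there exist two step functions $g_\ve\le \rchi_\om\le h_\ve$, constant on a finite collection of dyadic boxes, with $\int(h_\ve-g_\ve)<\ve$. First I would prove the lemma for such a step function $s$ constant on boxes of the form $\prod_{j=1}^d(a_j,b_j]$ with rational endpoints: on each such box, the fraction of sample points $\ii/\nn$ lying in it is $\prod_j\bigl(\lfloor n_jb_j\rfloor-\lfloor n_ja_j\rfloor\bigr)/N(\nn)$, which converges to $\prod_j(b_j-a_j)$ as $\min(\nn)\to\infty$ (since each $n_j\to\infty$). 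Summing over the finitely many boxes gives
\[
\frac{1}{N(\nn)}\sum_{\ii=\uu}^\nn s(\ii/\nn) \xrightarrow{n\to\infty} \int_{[0,1]^d} s(x)\,dx.
\]

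Applying this to $g_\ve$ and $h_\ve$ and sandwiching gives
\[
\int g_\ve \le \liminf_{n\to\infty} \frac{\rk(D_\nn(\rchi_\om))}{N(\nn)} \le \limsup_{n\to\infty} \frac{\rk(D_\nn(\rchi_\om))}{N(\nn)} \le \int h_\ve,
\]
and the two outer quantities differ by less than $\ve$. Letting $\ve\to 0$ forces the $\liminf$ and $\limsup$ to coincide with $\int_{[0,1]^d}\rchi_\om=\mu(\om)$, concluding the proof.

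The only mildly delicate point is that $\nn\to\infty$ in the multi-index sense means $\min(\nn)\to\infty$, with the components possibly going to infinity at different rates; but this does not affect the step-function argument above, since a box with rational endpoints always contains the expected asymptotic fraction of grid points as long as each $n_j\to\infty$. For boxes with irrational endpoints one can simply approximate from the inside/outside by rational boxes, absorbing the error into $\ve$. No other obstacle is expected.
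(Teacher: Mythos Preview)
Your proof is correct and takes a genuinely different route from the paper. The paper's proof invokes the GLT machinery: since $\rchi_\om$ is a.e.\ continuous, \textbf{GLT\,3} gives $\{D_\nn(\rchi_\om)\}_n\sim_{\rm GLT}\rchi_\om$, hence $\{D_\nn(\rchi_\om)\}_n\sim_\sigma\rchi_\om$ by \textbf{GLT\,1}, and the limit is then read off by testing against a continuous compactly supported $F$ with $F(0)=0$, $F(1)=1$. Your approach bypasses all of this and goes straight to the Riemann-sum interpretation of $\rk(D_\nn(\rchi_\om))/N(\nn)$, using only the equivalence between Peano--Jordan measurability and Riemann integrability of $\rchi_\om$. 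What the paper's argument buys is brevity once the GLT framework is already in place; what your argument buys is a self-contained proof that does not depend on any of the GLT axioms, so it could serve as a building block rather than a corollary of the theory. As a minor remark, your restriction to rational box endpoints is unnecessary: $(\lfloor n_j b_j\rfloor-\lfloor n_j a_j\rfloor)/n_j\to b_j-a_j$ holds for arbitrary $a_j<b_j$ as $n_j\to\infty$.
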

\begin{proof}
	We know from \textbf{GLT1} that \[
	\{ D_{\nn}(\rchi_\om)  \}_n\sim_\sigma \rchi_\om
	\]
	so in particular, if we consider a continuous function $F:\f R\to \f C$  with compact support and such that $F(1)=1$, $F(0)=0$, then
	\[
	\lim_{n\to\infty} \frac{\rk(D_{\nn}(\rchi_\om))}{N(\nn)} = \lim_{n\to\infty} \frac 1{N(\nn)} \sum_{i=1}^{N(\nn)} F(\sigma_i( D_{\nn}(\rchi_\om)) ) = \int_{[0,1]^d}  F(\rchi_\om(x))dx=\mu(\om).
	\]
\end{proof}
Actually, when $\om$ is Peano-Jordan measurable, we can show also that the number of points  $z_\ii$ arbitrarily  close to the boundary is negligible with respect to $N(\nn)$. Call 
\[
K_c = \set{ p\in[0,1]^d | d(p, \partial \om) \le c }
\]
the set of points whose distance from $\partial \om$ is at most $c\ge 0$. In the next result, we prove that $K_c$ contains few points $z_\ii$  when $c$ tends to zero, so that in the applications we can ignore the conditions that arise from grid points that are close enough to the boundary.
\begin{lemma}\label{preliminaries_near_border_points}
	Given a sequence $h_n$ of real nonnegative numbers converging to zero, and a Peano-Jordan measurable set $\om$, then 
	\[
	\lim_{n\to \infty} \frac{  
		\rk(D_{\nn}(\rchi_{K_{h_n}}))
	}{N(\nn)} = 0.
	\]
\end{lemma}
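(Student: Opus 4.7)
The plan is to dominate $\rchi_{K_{h_n}}$ by the characteristic function of a fixed Peano-Jordan measurable open neighbourhood of $\partial\om$ with arbitrarily small Lebesgue measure, and then apply \autoref{dimension}. The monotonicity of the construction $D_\nn(\rchi_A)$ in $A$ makes this dominance transfer directly to the ranks, since $\rk(D_\nn(\rchi_A))$ simply counts the grid points $z_\ii=\ii/\nn$ lying in $A$.

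First I would fix an arbitrary $\varepsilon>0$ and construct a Peano-Jordan measurable open set $S\supseteq \partial\om$ with $\mu(S)<\varepsilon$. Since $\om\subseteq [0,1]^d$ is bounded and $\om$ is Peano-Jordan measurable, $\partial\om$ is compact and satisfies $\mu(\partial\om)=0$. Hence $\partial\om$ can be covered by finitely many open boxes whose total Lebesgue measure is less than $\varepsilon$, and I take $S$ to be the union of these boxes: it is open, its boundary is contained in the union of the finitely many box faces and so satisfies $\mu(\partial S)=0$, making $S$ itself Peano-Jordan measurable with $\mu(S)<\varepsilon$.

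Next, by compactness of $\partial\om$ and openness of $S$, there exists $h>0$ such that every point of $[0,1]^d$ within distance $h$ of $\partial\om$ lies in $S$; that is, $K_h\subseteq S$. Since $h_n\to 0$, for all sufficiently large $n$ we have $h_n\le h$, hence $K_{h_n}\subseteq K_h\subseteq S$, which yields $\rk(D_\nn(\rchi_{K_{h_n}}))\le \rk(D_\nn(\rchi_S))$. Applying \autoref{dimension} to the Peano-Jordan measurable set $S$ I obtain
\[
\limsup_{n\to\infty}\frac{\rk(D_\nn(\rchi_{K_{h_n}}))}{N(\nn)}\le \lim_{n\to\infty}\frac{\rk(D_\nn(\rchi_S))}{N(\nn)}=\mu(S)<\varepsilon.
\]
Since $\varepsilon>0$ is arbitrary, the limsup is zero, which proves the claim.

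The main obstacle is the construction of $S$: a priori $K_h$ itself is only known to be closed, and to invoke \autoref{dimension} the dominating set must be Peano-Jordan measurable, not merely Lebesgue measurable. This is precisely why I pass to the finite-box cover, exploiting both the compactness of $\partial\om$ and the fact that $\mu(\partial\om)=0$, which are both consequences of the Peano-Jordan measurability of $\om$.
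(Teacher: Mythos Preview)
Your proof is correct and takes a genuinely different route from the paper's. The paper proves that for each fixed $c\ge 0$ the set $K_c$ is itself Peano-Jordan measurable, invoking the nontrivial fact (cited from Erd\H{o}s) that the level set $\{p: d(p,\partial\om)=c\}$ is Lebesgue-null for every $c>0$; it then applies \autoref{dimension} directly to $K_c$ and lets $c\downarrow 0$ along the sequence $h_m$. You instead sidestep the question of whether $K_c$ is Peano-Jordan measurable by dominating it with a finite union of boxes $S$, which is manifestly Peano-Jordan measurable, and apply \autoref{dimension} to $S$. Your argument is more elementary and self-contained; the paper's argument has the advantage of establishing the Peano-Jordan measurability of $K_c$ as a byproduct, a fact that is not needed here but could conceivably be reused elsewhere. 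One tiny cosmetic point: since the paper works inside $[0,1]^d$, you may want to replace $S$ by $S\cap[0,1]^d$ before applying \autoref{dimension}; this does not affect the argument, as the intersection is still a finite union of (possibly degenerate) boxes, still Peano-Jordan measurable, and still contains $\partial\om$.
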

\begin{proof}
	Remember that $\partial \om$ is always a closed set contained into $[0,1]^d$. Notice that $K_c$ converge to $K_0 = \partial \om$ as $c$ tends to zero, so we know that
	\[
	\lim_{c\to 0} \mu(K_c) = \mu(\partial \om) = 0.
	\]
	$K_c$ is a closed subset of $[0,1]^d$ for every $c$ since 
	\[
	p\not\in K_c\implies p\not\in[0,1]^d \vee d(p,\partial \om)>c
	\]
	and in both case there's an open neighbourhood of $p$ disjoint from $K_c$. 
	Moreover, if $c>0$ then
	\[
	p\in \partial K_c \implies p\in\partial [0,1]^d \vee d(p,\partial \om)=c
	\]
	and it is known that the set of points at fixed positive distance from a closed set is negligible \cite{erdos}, so we can conclude that $\mu(\partial K_c) =0$. This is actually true also for $K_0$ since \[\partial K_0 = \partial \partial \om\cu \partial \om\implies \mu(\partial K_0)\le \mu(\partial \om) = 0.\]  
	We can thus use  \autoref{dimension} to infer that for every $c\ge 0$ 
	\[
	\lim_{n\to \infty}\frac{\rk(D_{\nn}(\rchi_{K_c}))}{N(\nn)}  = \mu(K_c).
	\]
	Notice that if $h_n	< h_m$ then $K_{h_n}\cu K_{h_m}$ and consequently $\rk(D_{\nn}(\rchi_{K_{h_n}}))\le \rk(D_{\nn}(\rchi_{K_{h_m}}))$. When we fix an index $m>0$, we know that definitively $h_n< h_m$ since $h_n$ are converging to zero, so the following relation holds
	\[
	\limsup_{n\to \infty}\frac{\rk(D_{\nn}(\rchi_{K_{h_n}}))}{N(\nn)}  \le \limsup_{n\to \infty}\frac{\rk(D_{\nn}(\rchi_{K_{h_m}}))}{N(\nn)}  = \mu(K_{h_m}) \qquad \forall m 
	\]
	\[
	\implies \limsup_{n\to \infty}\frac{\rk(D_{\nn}(\rchi_{K_{h_n}}))}{N(\nn)} \le \inf_{m\in \f N}  \mu(K_{h_m}) = 0.
	\] 
\end{proof}

The points $z_\ii$ form an uniform grid on $[0,1]^d$, but in applications the most used grid, denoted as $\Xi_n$, is composed by points of the form 
\[
\frac{\ii}{\nn+\uu} = \left(
\frac{i_1}{n_1+1}, \frac{i_2}{n_2+1}, \dots, \frac{i_d}{n_d+1} 
\right),
\qquad i_j=0,1,2,\dots,n_j,n_j+1,\quad \, j=1,2,\dots,d.
\]
Consequentially we define a new diagonal matrix associated to $\om$ 
\[
I_{\nn}(\rchi_\om) := \diag\left( \rchi_\om\left( \frac{\ii}{\nn+\uu} \right)  \right)_{\ii = \uu,\dots,\nn}
\]
that has dimension $N(\nn)\times N(\nn)$, the same as $D_{\nn}(\rchi_\om)$.
More in general, for any continuous a.e. function $a:[0,1]^d\to  \f C$ we denote
\[
I_{\nn}(a) := \diag\left( a\left( \frac{\ii}{\nn+\uu} \right)  \right)_{\ii = \uu,\dots,\nn}
\] 
so that $I_{\nn}(a)$ and $D_{\nn}(a)$ have the same dimension, and can actually be proved that they enjoy the same GLT and spectral symbol.

\begin{lemma}\label{preliminaries_equivalence_D_nn_a}
	If $a:[0,1]^d\to  \f C$ is a continuous a.e. function, then
	\[
	\{ I_{\nn}(a)  \}_n\GLT a.
	\]
\end{lemma}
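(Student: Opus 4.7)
The plan is to write $I_{\nn}(a)=D_{\nn}(a)+\bigl(I_{\nn}(a)-D_{\nn}(a)\bigr)$ and show the correction is zero-distributed. Granting this, \textbf{GLT3} (second and third bullets) gives $\{D_{\nn}(a)\}_n\GLT a$ and $\{I_{\nn}(a)-D_{\nn}(a)\}_n\GLT 0$, and \textbf{GLT4} (linearity) yields $\{I_{\nn}(a)\}_n\GLT a$. So the task reduces to proving $\{E_{\nn}\}_n\svs 0$ with $E_{\nn}:=I_{\nn}(a)-D_{\nn}(a)$.

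\paragraph{Continuous warm-up.} I would first settle the case $a\in C([0,1]^d)$, in which $a$ is uniformly continuous on the compact cube $[0,1]^d$. For every index $\uu\le\ii\le\nn$ the two sample points satisfy
\[
\left\|\frac{\ii}{\nn+\uu}-\frac{\ii}{\nn}\right\|_\infty=\max_{j=1,\dots,d}\frac{i_j}{n_j(n_j+1)}\le\max_{j=1,\dots,d}\frac{1}{n_j+1}\xrightarrow{n\to\infty}0,
\]
so by uniform continuity $\|E_{\nn}\|=\max_{\ii}\bigl|a(\ii/(\nn+\uu))-a(\ii/\nn)\bigr|\to 0$. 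Property \textbf{Z\,1} (take $R_{\nn}=0$, $N_{\nn}=E_{\nn}$) gives $\{E_{\nn}\}_n\svs 0$, and the strategy above closes this case.

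\paragraph{General case via approximation.} For $a$ only continuous almost everywhere, I would approximate $a$ in measure by continuous functions $a_m\in C([0,1]^d)$, which is possible by Lusin's theorem applied to the measurable function $a$ (with truncation, if $a$ is unbounded). By the warm-up, $\{I_{\nn}(a_m)\}_n\GLT a_m$ for every $m$. Then \textbf{GLT7} reduces the statement to proving the a.c.s.\ convergence $\{I_{\nn}(a_m)\}_n\xrightarrow{\text{a.c.s.}}\{I_{\nn}(a)\}_n$ as $m\to\infty$. Fix $\varepsilon>0$ and split the diagonal matrix $I_{\nn}(a-a_m)=I_{\nn}(a)-I_{\nn}(a_m)$ into $N_{\nn,m}+R_{\nn,m}$ by keeping on $N_{\nn,m}$ the entries of modulus $\le\varepsilon$ and on $R_{\nn,m}$ the remaining entries. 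Then $\|N_{\nn,m}\|\le\varepsilon$, while $\rk(R_{\nn,m})$ equals the number of indices $\ii$ with $\ii/(\nn+\uu)\in S_{m,\varepsilon}:=\{x\in[0,1]^d:|a(x)-a_m(x)|>\varepsilon\}$, and $\mu(S_{m,\varepsilon})\to 0$ as $m\to\infty$.

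\paragraph{Main obstacle.} The delicate point is bounding $\rk(R_{\nn,m})$ uniformly in $n$ by something small in $m$, because $S_{m,\varepsilon}$ is merely Lebesgue measurable and \autoref{dimension} demands a Peano--Jordan set. I would overcome this by outer regularity: enclose $S_{m,\varepsilon}$ in a finite disjoint union $U_{m,\varepsilon}$ of open boxes with $\mu(U_{m,\varepsilon})<\mu(S_{m,\varepsilon})+1/m$; such a $U_{m,\varepsilon}$ is Peano--Jordan measurable, so \autoref{dimension} yields
\[
\limsup_{n\to\infty}\frac{\rk(R_{\nn,m})}{N(\nn)}\le\limsup_{n\to\infty}\frac{\rk(I_{\nn}(\rchi_{U_{m,\varepsilon}}))}{N(\nn)}=\mu(U_{m,\varepsilon})\xrightarrow{m\to\infty}0.
\]
(Strictly speaking \autoref{dimension} is stated for $D_{\nn}$, but the identical proof applies to $I_{\nn}$, since $\{I_{\nn}(\rchi_{U})\}_n\svs\rchi_U$ for Peano--Jordan $U$ by the same Riemann-sum argument.) This supplies the a.c.s.\ convergence required by \textbf{GLT7} and completes the proof.
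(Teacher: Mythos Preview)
Your continuous warm-up is correct and matches the paper's first step. The genuine gap is in the general case: the outer-regularity manoeuvre in your ``Main obstacle'' paragraph fails. It is \emph{not} true that a Lebesgue set of small measure can be enclosed in a \emph{finite} union of open boxes of comparably small measure. For instance, $\mathbb Q\cap[0,1]$ has measure zero, yet any finite union of open intervals covering it leaves as complement finitely many closed intervals containing no rationals---hence finitely many points---so the covering has full measure~$1$. From $\mu(S_{m,\varepsilon})\to 0$ alone you therefore cannot control $\limsup_n\rk(R_{\nn,m})/N(\nn)$: a Lebesgue-null set may contain \emph{every} grid point. This is not a technicality but the heart of the matter: your Lusin-based argument never uses the hypothesis that $a$ is continuous a.e., and if it worked it would prove $\{I_{\nn}(a)\}_n\GLT a$ for every measurable $a$. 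That is false: with $a=\rchi_{\mathbb Q^d\cap[0,1]^d}$ one has $a=0$ a.e., yet every grid point $\ii/(\nn+\uu)$ is rational, so $I_{\nn}(a)$ is the identity for all $n$ and $\{I_{\nn}(a)\}_n\GLT 1\ne 0$.

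The paper uses the hypothesis through Riemann integrability. After truncating to the bounded case, $a$ is bounded and continuous a.e., hence Riemann integrable; then for any continuous $a_m$ the function $|a-a_m|$ is Riemann integrable too, so the sample averages $N(\nn)^{-1}\sum_{\ii}|a-a_m|\bigl(\ii/(\nn+\uu)\bigr)$ converge to $\|a-a_m\|_1$, and \textbf{ACS\,4} with $p=1$ yields the a.c.s.\ convergence directly. Your rank/norm split could be repaired along the same lines: with $g=|a-a_m|$ Riemann integrable, a Chebyshev bound on the Riemann sums gives $\limsup_n\rk(R_{\nn,m})/N(\nn)\le \|a-a_m\|_1/\varepsilon$, which tends to $0$ provided you arrange $a_m\to a$ in $L^1$ rather than merely in measure. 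Either way, Riemann integrability (i.e.\ the continuity-a.e.\ hypothesis) is the indispensable ingredient.
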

\begin{proof}
	Notice that $a:[0,1]^d\to  \f C$ is a continuous a.e. if and only if when we split it into real and imaginary part $a = a_1 + \textnormal i a_2$, both the real functions $a_1$ and $a_2$ are continuous a.e.. In the same way, we can split $a_1$ and $a_2$ in their positive and negative parts, and they are still continuous a.e..  By \textbf{GLT4}, we can thus suppose that $a:[0,1]^d\to \f R^+$, since it is sufficient to prove the general thesis.
	
	%Moreover, splitting $a_1 = a_1^+ + a_1^-$ where $a_1^\pm =\pm \max\{\pm a, 0 \}$, we also know that $a_1$ is continuou

	The proof is divided into 3 steps, where we prove that the statement holds first when $a$ is continuous, then when $a$ is Riemann-integrable and eventually when $a$ is continuous a.e..\\
	
	\textit{Step 1.} Suppose $a$ is continuous and call $\omega_a$ its continuity module. Notice that
	\[
	\left\|  \frac{\ii}{\nn} - \frac{\ii}{\nn+\uu}\right\|_2^2 \le \sum_{k=1}^d \left( \frac{i_k}{n_k(n_k+1)}    \right)^2  \le   \sum_{k=1}^d\frac{1}{n_k^2}=:h_n^2 \xrightarrow{n\to\infty} 0,
	\] 
	so we can obtain a bound on the norm of  $I_\nn(a) - D_\nn(a)$ as
	\[
	\| I_\nn(a) - D_\nn(a) \| = \max_{\ii=\uu,\dots,\nn} \left| a\left( \frac{\ii}{\nn+\uu} \right) - a\left( \frac{\ii}{\nn} \right)  \right|
	\le \omega_a(h_n)\xrightarrow{n\to\infty} 0.
	\]
	By \textbf{Z1}, $\{I_\nn(a) - D_\nn(a) \}_n$ is zero-distributed and consequentially \textbf{GLT4} tells us that $\{ I_{\nn}(a)  \}_n\GLT a$.\\
	
	\textit{Step 2.} Suppose $a$ is Riemann-integrable, and consider a sequence of continuous function $a_m$ converging to $a$ in $L^1$ norm. 
	A continuous function is in particular Riemann-integrable, so $a_m-a$ is also Riemann-integrable and we can compute
	\[
	N(\nn)^{-1}\|I_{\nn}(a_m)-I_{\nn}(a)\|_1 = \frac{1}{N(\nn)}\sum_{\ii=\uu}^{\nn} \left| a_m\left( \frac{\ii}{\nn+\uu} \right) - a\left( \frac{\ii}{\nn+\uu} \right)  \right|\xrightarrow{n\to\infty} \| a-a_m\|_1 \xrightarrow{m\to\infty} 0.
	\]
	We can thus write the difference as $\|I_{\nn}(a_m)-I_{\nn}(a)\|_1 = N(\nn)\ve(n,m)$ where $\lim_{m\to \infty}\lim_{n\to \infty}\ve(n,m)=0$ and using \textbf{ACS 4}, we discover that $\{I_{\nn}(a_m)\}_n\acs \{I_{\nn}(a)\}_n$.
	We know from Step 1 that $\{ I_{\nn}(a_m)  \}_n\GLT a_m$ for every $m$, and $a_m\xrightarrow{m\to\infty} a$ in measure, so we conclude that $\{ I_{\nn}(a)  \}_n\GLT a$ by \textbf{GLT 7}.\\
	
	\textit{Step 3.} Suppose $a$ is continuous a.e and call $a_m(x): = \max\{ a(x),m\}$ its truncated function for every $m\in\f N$. Notice that $a_m$ are still continuous a.e. and also bounded, thus Riemann-integrable. Moreover, since $a$ is measurable we know that 
	\[
	\mu\{ x | a(x)>m \} =: h_m \xrightarrow{m\to\infty} 0.
	\]
	We know from Step 2 that $\{ I_{\nn}(a_m)  \}_n\GLT a_m$ for every $m$, so we can fix $1>\ve>0$ and consider $G_m(x)$ continuous and compact supported functions such that $\rchi_{[0,m-\ve ]}\le G_m\le  \rchi_{[-\ve,m]}$ to obtain
	\begin{align*}
	N(\nn)^{-1}\rk( I_\nn(a_m) - I_\nn(a) ) 
	&=  N(\nn)^{-1}\#\Set{\ii | a\left(\frac{\ii}{\nn + \uu} \right)>m, \, \uu\le \ii\le \nn}\\
	&= 1 - N(\nn)^{-1}\#\Set{\ii | a\left(\frac{\ii}{\nn + \uu} \right)\le m, \, \uu\le \ii\le \nn}\\
	&\le  1 - N(\nn)^{-1}\sum_{\ii=\uu}^{\nn} G_m(\sigma_\ii(D_\nn(A_m)) ).
	\end{align*}
	Note that $G_m(m)=0$, so $G_m(a_m)=G_m(a)$ and taking the limits of the preceding relations, one can see that 
	\begin{align*}
	\limsup_{n\to \infty}N(\nn)^{-1}\rk( I_\nn(a_m) - I_\nn(a) ) &\le 1 - \frac{1}{(2\pi)^d}\int_{[0,1]^d\times[-\pi,\pi]^d} G_m(a_m(x))dx\\
	&=  1 - \frac{1}{(2\pi)^d}\int_{[0,1]^d\times[-\pi,\pi]^d} G_m(a(x))dx\\
	&\le 1 -  \frac{1}{(2\pi)^d}\int_{[0,1]^d\times[-\pi,\pi]^d} \rchi_{[0,m-\ve ]}(a(x))dx\\
	&\le 1 -  \frac{(2\pi)^d - h_{m-1}}{(2\pi)^d}=:c(m)\xrightarrow{m\to\infty} 0.\\
	\end{align*}
	Consequently, for every $m$ we can find $n_m$ such that for every $n>n_m$, $\rk( I_\nn(a_m) - I_\nn(a) ) \le c(m) N(\nn)$ with $c(m)\xrightarrow{m\to\infty} 0$, and it leads to  $\{I_{\nn}(a_m)\}_n\acs \{I_{\nn}(a)\}_n$.
	We know that $a_m\xrightarrow{m\to\infty} a$ in measure, so we conclude again by \textbf{GLT7} that $\{ I_{\nn}(a)  \}_n\GLT a$.
\end{proof}
This result shows that for every $a:[0,1]^d\to  \f C$ continuous a.e. function, the sequences $\{ I_{\nn}(a)  \}_n$ and $\{ D_{\nn}(a)  \}_n$ have the same GLT (and consequently, spectral) symbol. In particular, if $\om$ is Peano-Jordan measurable, $\rchi_{\om}$ is continuous a.e., so $\{ I_{\nn}(\rchi_\om)  \}_n\GLT \rchi_\om$. In this case, it is also possible show that the difference $I_{\nn}(\rchi_\om) -D_{\nn}(\rchi_\om)$ has rank negligible when compared to $N(\nn)$.

\begin{lemma}\label{preliminaries_equivalence_D_nn_chi}
	If $\om$ is Peano-Jordan measurable, then
	%	\[
	%	\{ I_{\nn}(\rchi_\om)  \}_n\GLT \rchi_\om.
	%	\]
	%	In particular, 
	\[
	\rk \left(I_{\nn}(\rchi_\om) -D_{\nn}(\rchi_\om)\right) = o(N(\nn)).
	\]
	%	so that the sequence $\{ I_{\nn}(\rchi_\om) -D_{\nn}(\rchi_\om) \}_n$ is zero-distributed.
\end{lemma}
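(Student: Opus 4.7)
The strategy is to reduce the claim to \autoref{preliminaries_near_border_points}. The matrix $I_\nn(\rchi_\om) - D_\nn(\rchi_\om)$ is diagonal with entries in $\{-1,0,1\}$, so its rank is precisely the number of indices $\ii \in \{\uu,\ldots,\nn\}$ at which $\rchi_\om(\ii/\nn) \ne \rchi_\om(\ii/(\nn+\uu))$; bounding this count is therefore the whole task.

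First I would recover, exactly as in Step 1 of the proof of \autoref{preliminaries_equivalence_D_nn_a}, a uniform control on the distance between the two sample points, namely
\[
\left\| \frac{\ii}{\nn} - \frac{\ii}{\nn+\uu} \right\|_2 \le \sqrt{\sum_{k=1}^d \frac{1}{n_k^2}} =: h_n,
\]
and $h_n \to 0$ as $\min(\nn)\to\infty$.

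The key geometric observation is that any $\ii$ contributing to the rank must satisfy $\ii/\nn \in K_{h_n}$. To see this, suppose that neither endpoint of the segment from $\ii/\nn$ to $\ii/(\nn+\uu)$ lies on $\partial\om$; then one endpoint belongs to the open set $\om^\circ$ and the other to the open set $[0,1]^d \setminus \ol\om$. Since line segments are connected and these two open sets are disjoint, the segment must meet $\partial\om$, giving $d(\ii/\nn,\partial\om) \le h_n$. The remaining case, in which one endpoint already lies on $\partial\om$, gives the bound $d(\ii/\nn,\partial\om) \le h_n$ trivially.

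Combining the two observations yields
\[
\rk\bigl(I_\nn(\rchi_\om) - D_\nn(\rchi_\om)\bigr) \le \rk\bigl(D_\nn(\rchi_{K_{h_n}})\bigr),
\]
and \autoref{preliminaries_near_border_points} delivers the desired $o(N(\nn))$ bound. I do not expect serious obstacles: the only point that deserves attention is the set-theoretic connectedness argument above, which however is elementary and uses only Peano-Jordan measurability through the fact that $\om^\circ$, $\partial\om$ and $[0,1]^d \setminus \ol\om$ form a disjoint decomposition of $[0,1]^d$.
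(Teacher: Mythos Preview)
Your proof is correct and follows essentially the same route as the paper: identify the rank with the number of indices where the two samplings of $\rchi_\om$ disagree, observe that any such index gives a grid point within distance $h_n$ of $\partial\om$, and invoke \autoref{preliminaries_near_border_points}. You are simply a bit more explicit than the paper about the connectedness argument that forces the segment between $\ii/\nn$ and $\ii/(\nn+\uu)$ to intersect $\partial\om$.
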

\begin{proof}
	%	To prove that $\{ I_{\nn}(\rchi_\om) -D_{\nn}(\rchi_\om) \}_n$ is zero-distributed 
	It is enough to show that
	\[
	E_n:= \Set{\ii | \rchi_{\om}\left( \frac{\ii}{\nn}  \right) \ne \rchi_{\om}\left( \frac{\ii}{\nn+\uu}\right) ,  \uu\le \ii\le \nn    }
	\]
	has cardinality negligible when compared to $N(\nn)$, since
	\[
	\#E_n = \rk \left(I_{\nn}(\rchi_\om) -D_{\nn}(\rchi_\om)\right).
	\]
	Note that if $\ii\in E_n$ then there's a point of the boundary $\partial \om$ on the segment connecting the points $\ii/\nn$ and $\ii/(\nn +\uu)$. The distance between the two points is always  bounded and tends to zero when $n$ goes to infinity
	\[
	\left\|  \frac{\ii}{\nn} - \frac{\ii}{\nn+\uu}\right\|_2^2 \le \sum_{k=1}^d \left( \frac{i_k}{n_k(n_k+1)}    \right)^2  \le   \sum_{k=1}^d\frac{1}{n_k^2}=:h_n^2 \xrightarrow{n\to\infty} 0.
	\] 
	It means that for every $\ii\in E_n$ we have  $d(\ii/\nn,\partial \om)\le h_n$, so \autoref{preliminaries_near_border_points} let us conclude that
	\begin{align*}
	\limsup_{n\to \infty} \frac{\#E_n}{N(\nn)} &=
	\limsup_{n\to \infty} \frac{\#\Set{\frac{\ii}{\nn} | \rchi_{\om}\left( \frac{\ii}{\nn}  \right) \ne \rchi_{\om}\left( \frac{\ii}{\nn+\uu}\right) ,  \uu\le \ii\le \nn    }}{N(\nn)}\\
	&\le 	\limsup_{n\to \infty} \frac{\#\Set{\frac{\ii}{\nn} |d\left(\frac{\ii}{\nn},\partial \om\right)\le h_n ,  \uu\le \ii\le \nn    }}{N(\nn)}\\
	&= 	\limsup_{n\to \infty} \frac{  
		\rk(D_{\nn}(\rchi_{K_{h_n}}))
	}{N(\nn)} =0
	\end{align*}
\end{proof}

The latest result shows that the two diagonal sequences 
$	\{ I_{\nn}(\rchi_\om)  \}_n$ and
$	\{ D_\nn(\rchi_\om)  \}_n$
hold essentially the same information about the domain $\om$. We adopt the second one since later it will be fundamental to operate on the grid $\Xi_n$ through  diagonal matrices.
In particular, the quantity
\[
d_n^\om := \rk(I_\nn(\rchi_\om))
\]
is important since it counts the number of grid points inside  $\om$. As a corollary, we find again the same results of \autoref{dimension} and \autoref{preliminaries_near_border_points}, referred to the sequence $	\{ I_{\nn}(\rchi_\om)  \}_n$. We will not prove them, since the arguments are the same we used in the proofs of \autoref{dimension} and \autoref{preliminaries_near_border_points}.
\begin{corollary}\label{dimension_chi}
	If $\om$ is a Peano-Jordan measurable set, then
	\[
	\lim_{n\to\infty} \frac{d_n^\om  }{N(\nn)} = \mu(\om).
	\]	
\end{corollary}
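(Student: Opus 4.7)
The plan is to mirror the proof of Lemma \ref{dimension}, simply replacing $D_\nn(\rchi_\om)$ by $I_\nn(\rchi_\om)$. Since $\om$ is Peano-Jordan measurable, $\rchi_\om$ is continuous almost everywhere, so Lemma \ref{preliminaries_equivalence_D_nn_a} gives $\{I_{\nn}(\rchi_\om)\}_n\GLT\rchi_\om$, and then \textbf{GLT1} upgrades this to the singular-value distribution $\{I_{\nn}(\rchi_\om)\}_n\svs\rchi_\om$.

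Next, I choose a continuous compactly supported test function $F:\mathbb R\to\mathbb C$ with $F(0)=0$ and $F(1)=1$ (for instance a piecewise-linear bump). Because $I_\nn(\rchi_\om)$ is a diagonal matrix with entries in $\{0,1\}$, each of its singular values is either $0$ or $1$, so $F(\sigma_i(I_\nn(\rchi_\om)))=\sigma_i(I_\nn(\rchi_\om))$ and hence $\sum_{i=1}^{N(\nn)}F(\sigma_i(I_\nn(\rchi_\om)))=\rk(I_\nn(\rchi_\om))=d_n^\om$. Applying the singular-value distribution of $I_\nn(\rchi_\om)$ to this $F$ then yields
\[
\lim_{n\to\infty}\frac{d_n^\om}{N(\nn)}=\lim_{n\to\infty}\frac{1}{N(\nn)}\sum_{i=1}^{N(\nn)}F(\sigma_i(I_\nn(\rchi_\om)))=\int_{[0,1]^d}F(\rchi_\om(x))\,dx=\mu(\om),
\]
where the final equality uses $F(\rchi_\om)=\rchi_\om$ pointwise.

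A completely equivalent shortcut is to combine the two preceding lemmas directly: by Lemma \ref{preliminaries_equivalence_D_nn_chi} one has $\rk(I_\nn(\rchi_\om)-D_\nn(\rchi_\om))=o(N(\nn))$, and subadditivity of the rank gives $|d_n^\om-\rk(D_\nn(\rchi_\om))|=o(N(\nn))$; dividing by $N(\nn)$ and invoking Lemma \ref{dimension} concludes. There is no real obstacle here, as the corollary is essentially a translation of Lemma \ref{dimension} from the sampling grid $\ii/\nn$ to the grid $\ii/(\nn+\uu)$, and both arguments above are routine once the equivalence between $D_\nn(\rchi_\om)$ and $I_\nn(\rchi_\om)$ has been established.
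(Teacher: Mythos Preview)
Your proposal is correct and follows exactly the approach the paper intends: the paper explicitly omits the proof, remarking that the argument is the same as that of \autoref{dimension}, which is precisely your first argument (use \autoref{preliminaries_equivalence_D_nn_a} to get $\{I_\nn(\rchi_\om)\}_n\sim_\sigma\rchi_\om$ and then apply the same test-function trick). Your alternative shortcut via \autoref{preliminaries_equivalence_D_nn_chi} and rank subadditivity is also valid and equally immediate.
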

\begin{corollary}\label{preliminaries_near_border_points_chi}
	Given a sequence $h_n$ of real nonnegative numbers converging to zero, and a Peano-Jordan measurable set $\om$, then 
	\[
	\lim_{n\to \infty} \frac{  
		d_n^{K_{h_n}}
	}{N(\nn)} = 0.
	\]
	In particular, if $\mu(\om) >0$, then 
	\[
	\lim_{n\to \infty} \frac{  
		d_n^{K_{h_n}}	
	}{d_n^\om} = 0.
	\]
\end{corollary}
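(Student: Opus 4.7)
The plan is to mirror verbatim the arguments already used for \autoref{dimension} and \autoref{preliminaries_near_border_points}, simply replacing the role of $\{D_\nn(\rchi_\om)\}_n$ with $\{I_\nn(\rchi_\om)\}_n$. This substitution is legitimate because \autoref{preliminaries_equivalence_D_nn_a} gives $\{I_\nn(\rchi_\om)\}_n\GLT\rchi_\om$, hence $\{I_\nn(\rchi_\om)\}_n\svs\rchi_\om$ by \textbf{GLT 1} --- which is precisely the property that drove the proof of \autoref{dimension}. Combined with \autoref{dimension_chi} (which is already the $I_\nn$-version of \autoref{dimension}), everything we need is in place.

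For the first assertion, I would first note that, exactly as shown inside the proof of \autoref{preliminaries_near_border_points}, for every fixed $c\ge 0$ the set $K_c$ is closed with $\mu(\partial K_c)=0$, hence Peano-Jordan measurable. Thus \autoref{dimension_chi} applies to each $K_c$ and yields
\[
\lim_{n\to\infty}\frac{d_n^{K_c}}{N(\nn)}=\mu(K_c),\qquad \lim_{c\to 0}\mu(K_c)=\mu(\partial\om)=0.
\]
Next, the monotonicity argument carries over unchanged: if $h_n<h_m$ then $K_{h_n}\subseteq K_{h_m}$, so $\rchi_{K_{h_n}}\bigl(\ii/(\nn+\uu)\bigr)\le\rchi_{K_{h_m}}\bigl(\ii/(\nn+\uu)\bigr)$ componentwise, giving $d_n^{K_{h_n}}\le d_n^{K_{h_m}}$. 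Fixing $m$ and using $h_n\to 0$ to ensure $h_n<h_m$ eventually, one obtains
\[
\limsup_{n\to\infty}\frac{d_n^{K_{h_n}}}{N(\nn)}\le\limsup_{n\to\infty}\frac{d_n^{K_{h_m}}}{N(\nn)}=\mu(K_{h_m})\qquad\forall m,
\]
and letting $m\to\infty$ forces the $\limsup$ to $0$.

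For the second assertion, when $\mu(\om)>0$ I would simply factor
\[
\frac{d_n^{K_{h_n}}}{d_n^\om}=\frac{d_n^{K_{h_n}}}{N(\nn)}\cdot\frac{N(\nn)}{d_n^\om},
\]
observe that the first factor tends to $0$ by what was just proved, and that the second factor tends to $1/\mu(\om)<\infty$ by \autoref{dimension_chi}, so the product converges to $0$.

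There is no real obstacle here: the statement is a genuine corollary whose only subtle ingredient is checking that $K_c$ remains Peano-Jordan measurable so that \autoref{dimension_chi} may be invoked, and this verification is already buried inside the proof of \autoref{preliminaries_near_border_points} and can be quoted directly.
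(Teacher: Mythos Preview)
Your proposal is correct and is exactly the approach the paper intends: the paper does not even write out a proof, stating instead that the arguments are the same as those of \autoref{dimension} and \autoref{preliminaries_near_border_points} with $I_\nn$ in place of $D_\nn$. Your write-up fills in precisely those details, including the Peano--Jordan measurability of $K_c$ and the monotonicity step, and the factorization for the second assertion is the natural way to combine the first part with \autoref{dimension_chi}.
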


Note that if $h_n=0$ for every $n$, we have $K_{h_n} = K_0=\partial \om$ for every $n$, so $d_n^{\partial\om} = o(d_n^\om) = o(N(\nn))$.
As a corollary, we can also derive  the limits of $d_n^{\ol\om}(N(\nn))^{-1}$ and $d_n^{\om^\circ}(N(\nn))^{-1}$, since we know that $\ol \om$ and $\om^\circ$ differ from $\om$ for a negligible set inside $\partial \om$.
\[
\om \cup \partial\om = \ol\om \supseteq \om \implies d_n^{\om} + d_n^{\partial\om}\ge d_n^{\ol\om}\ge d_n^{\om}\implies \lim_{n\to\infty} \frac{d_n^{\ol\om}}{N(\nn)} = \mu(\om),
\]
\[
\om \setminus \partial\om = \om^\circ \cu \om \implies d_n^{\om} - d_n^{\partial\om}\le d_n^{\om^\circ}\le d_n^{\om}\implies \lim_{n\to\infty} \frac{d_n^{\om^\circ}}{N(\nn)} = \mu(\om).
\]
Notice that   Corollary \autoref{dimension_chi} shows  $\lim_{n\to\infty} d^\om_n = +\infty$ whenever the measure of $\om$ is not zero, so from now on, we  suppose that $\mu(\om)>0$.

\section{Restriction and Expansion Operators}\label{Res_Exp}

If we fix a Peano-Jordan measurable set $\om$, then we can build the map
\[
Z_\om :\{A_{\nn}\}_n \mapsto \{I_{\nn}(\rchi_\om) A_{\nn}I_{\nn}(\rchi_\om)   \}_n.
\] 
From now on, we abuse the notation and write $Z_\om(A_\nn)$ for the matrix $I_{\nn}(\rchi_\om) A_{\nn}I_{\nn}(\rchi_\om) $.
If we call $\mc G_d$ the set of $d$-dimensional GLT sequences, notice that $Z_\om(\mc G_d)\cu \mc G_d$ by \textbf{GLT4}, since it multiplies a GLT sequence with other GLT sequences, as shown in \autoref{preliminaries_equivalence_D_nn_chi}. Some properties of this operation are
\begin{itemize}
	\item $Z_\om$ is linear,
	%	 and the kernel is a subalgebra of $\mc G_d$,
	\item $Z_\om$ is idempotent, 
	\item if $\{A_{\nn}\}_n\GLT k(x,\theta)$, then $Z_\om(\{A_{\nn}\}_n)\GLT k(x,\theta)\rchi_\om(x)$, 
	\item if $\{A_{\nn}\}_n$ is a real sequence, then $Z_\om(\{A_{\nn}\}_n)$ is still real,
	\item  if $\{A_{\nn}\}_n$ is a Hermitian sequence, then $Z_\om(\{A_{\nn}\}_n)$ is still Hermitian.
\end{itemize} 
If we associate  each multi-index $\bm i$ in the matrix $A_{\nn}$  to the point $\frac{\bm i}{\bm{n}+\uu}\in \Xi_n$, then $Z_\om$  sets to zero every row and column corresponding to a point not in $\om$. We can thus try to delete the zero rows and columns in the matrices, and obtain a matrix with size $d_n^\om\times d_n^\om$.

Given a set $\om$ with negligible boundary, we consider $I_{\nn}(\rchi_\om) $ and we enumerate the non-zero rows and the zero rows through two strictly increasing functions
\[
\phi_{\nn} :\{1,2,\dots, d_n^\om\} \to \{ \uu,\dots,\nn \}
\qquad 
\psi_{\nn} :\{d_n^\om + 1,d_n^\om +2,\dots,N(\nn) \} \to \{ \uu,\dots,\nn \}
\]
such that the $\phi_{\nn}(j)$-th row of $I_{\nn}(\rchi_\om) $ is non-zero for every $j$, and the $\psi_{\nn}(j)$-th row of $I_{\nn}(\rchi_\om) $ is zero for every $j$.
In particular, the images of $\phi_\nn$ and $\psi_\nn$ correspond to  the set of points $\ii/(\nn+\uu)$ in $\Xi_n$
respectively belonging and not belonging to $\om$.
Notice that $\phi_\nn$ and $\psi_\nn$ are uniquely determined by their properties.

For every $\nn$, we define a rectangular matrix $\Pi_{\nn,\om}$ of size $d_n^\om\times N(\nn)$ as
\[(
\Pi_{\nn,\om})_{i,\jj} := (I_{\nn}(\rchi_\om) )_{\phi_\nn(i),\jj}
\] 
so that, for any matrix $A_\nn$ of size $N(\nn)\times N(\nn)$, we can delete the rows and columns corresponding to points not belonging to $\om$  
through the  map
\[
R_\om :\{A_{\nn}\}_n \mapsto \{\Pi_{\nn,\om} A_{\nn}(\Pi_{\nn,\om})^T  \}_n
\] 
and add zero rows and columns corresponding to points not belonging to $\om$   to any matrix  $S_\nn^\om$ of size $d_n^\om\times d_n^\om$
through the  map
\[
E_\om : \ms {S^\om} \mapsto \{(\Pi_{\nn,\om})^TS^\om_\nn \Pi_{\nn,\om} \}_n.
\] 
We will use the notation $R_\om(A_\nn)$ for $\Pi_{\nn,\om} A_{\nn}(\Pi_{\nn,\om})^T $ and the notation $E_\om(S^\om_\nn)$ for $(\Pi_{\nn,\om})^TS^\om_\nn \Pi_{\nn,\om}$. Moreover, unless differently specified, we use the exponent $\om$ to distinguish the sequences $\ms {S^\om}$ of size $d_n^\om\times d_n^\om$ from classical sequences $\ms A$ of size $N(\nn)\times N(\nn)$.

\begin{remark}
	Note that the operators $E_\om,R_\om,Z_\om$, the matrices $\Pi_{\nn,\om}, I_{\nn}(\rchi_\om) $  and the quantity $\dno$   can be defined for any measurable set $\om$, even if not Peano-Jordan measurable. 
\end{remark}

\subsection{Effects on the Sequences}

Let us check some basic properties of the matrices $\Pi_{\nn,\om}, I_{\nn}(\rchi_\om) $ and the operators $E_\om,R_\om,Z_\om$.

\begin{lemma}\label{MDREZ_relations}
	For every index $\nn$, we have
	\begin{itemize}
		\item $(\Pi_{\nn,\om})^T\Pi_{\nn,\om} = I_{\nn}(\rchi_\om) $,
		\item $ \Pi_{\nn,\om}(\Pi_{\nn,\om})^T = I_\nn^\om$.
	\end{itemize}
	In particular, given any matrix $A_\nn$ of size $N(\nn)\times N(\nn)$, and any matrix $S^\om_\nn$ of size $d_n^\om\times d_n^\om$, we have
	\begin{itemize}
		\item $R_\om (A_{\nn}) = R_\om \circ Z_\om (A_{\nn})$,
		\item $R_\om(E_\om(S^\om_\nn)) = S^\om_\nn$,
		\item $E_\om(R_\om(A_{\nn})) = Z_\om (A_{\nn})$,
		\item $Z_\om(E_\om(S^\om_\nn)) = E_\om(S^\om_\nn)$.
	\end{itemize}
	Moreover $(E_\om(S^\om_\nn))^* = E_\om((S^\om_\nn)^*)$ and $(R_\om(A_\nn))^* = R_\om(A^*_\nn)$, so 
	\begin{itemize}
		\item $S^\om_\nn$ Hermitian $\implies E_\om(S^\om_\nn)$ Hermitian,
		\item $A_\nn$ Hermitian $\implies R_\om(A_\nn)$ Hermitian.
	\end{itemize}
\end{lemma}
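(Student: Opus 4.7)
The plan is to unfold the definition of $\Pi_{\nn,\om}$ and observe that it is simply a selection matrix: by the way the function $\phi_\nn$ was constructed, its image picks out precisely the multi-indices $\ii$ with $\ii/(\nn+\uu)\in\om$, i.e., the nonzero diagonal positions of $I_\nn(\rchi_\om)$. Concretely, the $i$-th row of $\Pi_{\nn,\om}$ equals the standard basis row vector $e_{\phi_\nn(i)}^T$ of length $N(\nn)$, because the $\phi_\nn(i)$-th row of the diagonal matrix $I_\nn(\rchi_\om)$ is, by construction, $e_{\phi_\nn(i)}^T$. Once this observation is in place, everything reduces to index bookkeeping.

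First I would establish the two fundamental identities. For $\Pi_{\nn,\om}(\Pi_{\nn,\om})^T$, the $(i,j)$ entry is $e_{\phi_\nn(i)}^T e_{\phi_\nn(j)}=\delta_{\phi_\nn(i),\phi_\nn(j)}$, which equals $\delta_{ij}$ since $\phi_\nn$ is strictly increasing and thus injective; this yields the identity matrix $I_{\dno}$. For $(\Pi_{\nn,\om})^T\Pi_{\nn,\om}$, one expands as $\sum_{i=1}^{\dno}e_{\phi_\nn(i)}e_{\phi_\nn(i)}^T$, which is diagonal with a $1$ exactly at the positions $\phi_\nn(i)$, matching $I_\nn(\rchi_\om)$ by the choice of $\phi_\nn$.

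Given these two identities, all remaining equalities are mechanical substitutions. For example, $E_\om(R_\om(A_\nn))=(\Pi_{\nn,\om})^T\Pi_{\nn,\om}A_\nn(\Pi_{\nn,\om})^T\Pi_{\nn,\om}=I_\nn(\rchi_\om)A_\nn I_\nn(\rchi_\om)=Z_\om(A_\nn)$; $R_\om(E_\om(S_\nn^\om))=\Pi_{\nn,\om}(\Pi_{\nn,\om})^T S_\nn^\om\Pi_{\nn,\om}(\Pi_{\nn,\om})^T=S_\nn^\om$; $Z_\om(E_\om(S_\nn^\om))$ and $R_\om(A_\nn)=R_\om(Z_\om(A_\nn))$ follow in the same fashion by repeatedly inserting or cancelling $\Pi_{\nn,\om}(\Pi_{\nn,\om})^T=I_{\dno}$ in the middle of the expression. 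The adjoint identities $(E_\om(S_\nn^\om))^*=E_\om((S_\nn^\om)^*)$ and $(R_\om(A_\nn))^*=R_\om(A_\nn^*)$ are immediate because $\Pi_{\nn,\om}$ is real, so transposition passes through the outer factors; the Hermitian preservation bullets follow at once.

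I do not expect any real obstacle here—the whole argument is a careful unpacking of the definitions. The only point that deserves to be spelled out explicitly is the correspondence between the image of $\phi_\nn$ and the nonzero diagonal positions of $I_\nn(\rchi_\om)$, since this is what allows the selection-matrix interpretation to drive the remaining algebra.
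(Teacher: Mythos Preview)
Your proposal is correct and follows essentially the same approach as the paper: both establish the two fundamental identities $(\Pi_{\nn,\om})^T\Pi_{\nn,\om}=I_\nn(\rchi_\om)$ and $\Pi_{\nn,\om}(\Pi_{\nn,\om})^T=I_{\dno}$ by an index computation (you phrase it via the selection-matrix interpretation $\Pi_{\nn,\om}=\sum_i e_i e_{\phi_\nn(i)}^T$, the paper writes out the sums explicitly), and then derive all remaining relations by direct algebraic substitution. The only cosmetic difference is that the paper handles $Z_\om(E_\om(S_\nn^\om))=E_\om(S_\nn^\om)$ by composing the already-proved identities $R_\om\circ E_\om=\mathrm{id}$ and $E_\om\circ R_\om=Z_\om$, whereas you indicate a direct expansion; both work.
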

\begin{proof}
	\begin{align*}
	((\Pi_{\nn,\om})^T\Pi_{\nn,\om}  )_{\ii,\jj}
	&=\sum_{k=1}^{d_n^\om}
	(\Pi_{\nn,\om})_{k,\ii} 
	(\Pi_{\nn,\om})_{k,\jj} \\
	&= \sum_{k=1}^{d_n^\om} (I_{\nn}(\rchi_\om) )_{\phi_\nn(k),\ii}
	(I_{\nn}(\rchi_\om) )_{\phi_\nn(k),\jj}\\
	&= 
	\begin{cases}
	1 & \ii=\jj\in Range(\phi_\nn)\\
	0 & \textnormal{otherwise} 
	\end{cases} =
	(I_{\nn}(\rchi_\om) )_{\ii,\jj},
	\end{align*}
	\begin{align*}
	(\Pi_{\nn,\om} (\Pi_{\nn,\om})^T )_{i,j}
	&=\sum_{\bm k=\uu}^{\nn}
	(\Pi_{\nn,\om})_{i,\bm k} 
	(\Pi_{\nn,\om})_{j,\bm k} \\
	&= \sum_{\bm k=\uu}^{\nn} (I_{\nn}(\rchi_\om) )_{\phi_\nn(i),\bm k}
	(I_{\nn}(\rchi_\om) )_{\phi_\nn(j),\bm k}\\
	&= \delta_{\phi_\nn(i),\phi_\nn(j)} =\delta_{i,j},
	\end{align*}
	\begin{align*}
	R_\om (A_{\nn}) &= \Pi_{\nn,\om} A_{\nn}(\Pi_{\nn,\om})^T \\
	&=   I_\nn^\om \Pi_{\nn,\om} A_{\nn}(\Pi_{\nn,\om})^TI_\nn^\om      \\
	&=  \Pi_{\nn,\om} (\Pi_{\nn,\om})^T\Pi_{\nn,\om} A_{\nn}(\Pi_{\nn,\om})^T \Pi_{\nn,\om} (\Pi_{\nn,\om})^T   \\
	&=   \Pi_{\nn,\om} I_{\nn}(\rchi_\om)  A_{\nn}I_{\nn}(\rchi_\om)  (\Pi_{\nn,\om})^T   \\
	&=	R_\om \circ Z_\om ( A_{\nn}  ),
	\end{align*}
	\begin{align*}
	R_\om(E_\om( S_\nn^\om)) &=  \Pi_{\nn,\om} (\Pi_{\nn,\om})^TS^\om_{\nn}\Pi_{\nn,\om}(\Pi_{\nn,\om})^T   \\
	&=  I_\nn^\om S^\om_{\nn}I_\nn^\om    \\
	&=  S_\nn^\om,
	\end{align*}
	\begin{align*}
	E_\om(R_\om(A_\nn)) &=  (\Pi_{\nn,\om})^T\Pi_{\nn,\om} A_{\nn}(\Pi_{\nn,\om})^T\Pi_{\nn,\om}   \\
	&=  I_{\nn}(\rchi_\om) A_{\nn}I_{\nn}(\rchi_\om)    \\
	&= Z_\om(A_\nn),
	\end{align*}
	\[
	R_\om(E_\om( S_\nn^\om)) =  S_\nn^\om\implies E_\om( S_\nn^\om) = E_\om(R_\om(E_\om( S_\nn^\om))) = Z_\om (E_\om( S_\nn^\om)),
	\]
	\[
	(E_\om( S_\nn^\om))^*  = ((\Pi_{\nn,\om})^T S_\nn^\om \Pi_{\nn,\om})^* =  (\Pi_{\nn,\om})^T (S_\nn^\om)^* \Pi_{\nn,\om} = E_\om( (S_\nn^\om)^* ),
	\]
	\[
	(R_\om(A_\nn))^*  = (\Pi_{\nn,\om} A_\nn(\Pi_{\nn,\om})^T)^* =  \Pi_{\nn,\om} A_\nn^* (\Pi_{\nn,\om})^T = R_\om( A_\nn^* ).
	\]
\end{proof}

The operator $R_\om$ has the job to extract a principal minor from the matrices, so it is easy to see that it makes the norm drop.
\begin{lemma}\label{norm_reduction}
	For every $1\le p\le \infty$, 
	\[
	\|R_\om (A)\|_p\le \|A\|_p.
	\] 
\end{lemma}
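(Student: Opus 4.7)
The plan is to exploit the fact that $\Pi_{\nn,\om}$ is a partial isometry, as is made clear by the previous lemma: from $\Pi_{\nn,\om}(\Pi_{\nn,\om})^T = I_\nn^\om$ we see that $\Pi_{\nn,\om}(\Pi_{\nn,\om})^T$ has all its eigenvalues equal to $1$, so all singular values of $\Pi_{\nn,\om}$ equal $1$ and hence $\|\Pi_{\nn,\om}\|_\infty = \|(\Pi_{\nn,\om})^T\|_\infty = 1$. This reduces the lemma to a standard inequality for Schatten $p$-norms.

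The next step is to invoke the well known submultiplicative inequality $\|UBV\|_p\le \|U\|_\infty\,\|B\|_p\,\|V\|_\infty$, valid for any $p\in[1,\infty]$ and for any matrices of compatible size. Applying it to $U = \Pi_{\nn,\om}$, $B = A$, $V = (\Pi_{\nn,\om})^T$ gives
\[
\|R_\om(A)\|_p = \|\Pi_{\nn,\om} A (\Pi_{\nn,\om})^T\|_p \le \|\Pi_{\nn,\om}\|_\infty\,\|A\|_p\,\|(\Pi_{\nn,\om})^T\|_\infty = \|A\|_p,
\]
which is the desired conclusion.

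There is essentially no obstacle: the whole argument reduces to checking that $\|\Pi_{\nn,\om}\|_\infty = 1$, which is immediate from \autoref{MDREZ_relations}. If a self-contained derivation is preferred over citing the Schatten inequality, one can alternatively observe that $R_\om(A)$ is (after permutation) a principal submatrix of $A$ of order $d_n^\om$, so the case $p=2$ follows entry-wise from $\sum_{i,j}|A_{\phi_\nn(i),\phi_\nn(j)}|^2\le\sum_{\ii,\jj}|A_{\ii,\jj}|^2$, the case $p=\infty$ follows from the variational characterization of the operator norm by extending test vectors with zeros, and the remaining cases of $p$ follow either by interpolation or by going through the singular value interlacing inequalities $\sigma_k(R_\om(A))\le\sigma_k(A)$ for principal submatrices and summing the $p$-th powers.
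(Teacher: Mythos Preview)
Your proof is correct. Your primary route via the submultiplicative inequality $\|UBV\|_p\le\|U\|_\infty\|B\|_p\|V\|_\infty$ is a clean variant; the paper instead goes through your alternative, invoking the Cauchy interlacing inequalities $\sigma_i(R_\om(A))\le\sigma_i(A)$ (which hold because $\Pi_{\nn,\om}$ has orthonormal rows, so $R_\om(A)$ is a principal submatrix up to permutation) and then summing $p$-th powers. Both arguments rest on the same observation from \autoref{MDREZ_relations} that $\Pi_{\nn,\om}(\Pi_{\nn,\om})^T=I_\nn^\om$, and neither is materially more general than the other here.
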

\begin{proof}
	The matrices $\Pi_{\nn,\om}$ are unitary, so we can apply the Cauchy interlacing theorem and find that
	\[
	\sigma_i(R_\om (A)) \le \sigma_i(A) \qquad \forall 1\le i\le d_n^\om.
	\] 
	The thesis easily follows from the definition of  $p$-Schatten norm.
\end{proof}

The map $R_\om$ applied to $Z_\om(A_{\nn})$ has the effect to delete only rows and columns that are already zero, and we can easily tell the behaviour of their singular values and eigenvalues. 

\begin{lemma}\label{ker_reduction}
	There exists a permutation matrix $P$ of size $N(\nn)\times N(\nn)$  such that 
	for every matrix $A_\nn$ of size $N(\nn)\times N(\nn)$,  
	\[
	P Z_\om(A_\nn) P^T =
	\begin{pmatrix}
	R_\om(A_\nn) & 0\\0 & 0
	\end{pmatrix}.
	\]
	In particular, $ Z_\om(A_\nn)$
	has the same eigenvalues and singular values of the matrix $R_\om(A_\nn)$
	except for $N(\nn) - d_n^\om$ null eigenvalues and singular values.
\end{lemma}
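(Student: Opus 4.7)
The plan is to build $P$ directly from the two enumeration maps $\phi_\nn$ and $\psi_\nn$ already introduced. Set $P$ to be the $N(\nn)\times N(\nn)$ permutation matrix whose $i$-th row is $e_{\phi_\nn(i)}^T$ for $1\le i\le d_n^\om$ and $e_{\psi_\nn(i)}^T$ for $d_n^\om < i\le N(\nn)$. Since the images of $\phi_\nn$ and $\psi_\nn$ partition the multi-index range $\uu,\ldots,\nn$ (they enumerate exactly the nonzero and the zero rows of $I_\nn(\rchi_\om)$), this $P$ is indeed a permutation matrix. By construction, the top $d_n^\om$ rows of $P$ coincide with the selection matrix $\Pi_{\nn,\om}$, while the bottom $N(\nn)-d_n^\om$ rows form an analogous matrix $\Pi_{\nn,\om^c}$ pointing to the zero rows of $I_\nn(\rchi_\om)$.

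The key computation is $P\, I_\nn(\rchi_\om)$. By construction $\Pi_{\nn,\om} I_\nn(\rchi_\om) = \Pi_{\nn,\om}$, because multiplying by $I_\nn(\rchi_\om)$ on the right leaves the already-selected nonzero rows untouched, while $\Pi_{\nn,\om^c} I_\nn(\rchi_\om)=0$ since those rows of $I_\nn(\rchi_\om)$ vanish. Hence
\[
P\, I_\nn(\rchi_\om) = \begin{pmatrix} \Pi_{\nn,\om} \\ 0 \end{pmatrix},\qquad I_\nn(\rchi_\om)\, P^T = \begin{pmatrix} \Pi_{\nn,\om}^T & 0 \end{pmatrix},
\]
and inserting $A_\nn$ between these two factors yields
\[
P\, Z_\om(A_\nn)\, P^T = \begin{pmatrix} \Pi_{\nn,\om} A_\nn \Pi_{\nn,\om}^T & 0 \\ 0 & 0 \end{pmatrix} = \begin{pmatrix} R_\om(A_\nn) & 0 \\ 0 & 0 \end{pmatrix},
\]
which is the first claim. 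Note that $P$ does not depend on $A_\nn$, only on $\om$ and $\nn$, as required by the statement.

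For the spectral consequences, I would simply observe that $P$ is unitary, so conjugation by $P$ preserves both eigenvalues and singular values with multiplicities. The block-diagonal matrix on the right has precisely the eigenvalues and singular values of $R_\om(A_\nn)$ together with $N(\nn)-d_n^\om$ additional zeros coming from the trailing zero block, which proves the second assertion. I do not expect a substantial obstacle here: the result is essentially a bookkeeping statement that makes rigorous the intuitive picture behind the operators $R_\om$ and $Z_\om$. The only mild subtlety is to certify that $\phi_\nn$ and $\psi_\nn$ together enumerate the full range $\uu,\ldots,\nn$ so that $P$ really is a permutation matrix, but this follows immediately from their defining properties.
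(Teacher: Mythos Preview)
Your proof is correct and follows essentially the same route as the paper: the permutation matrix $P$ you build is exactly the one the paper defines entrywise via $\phi_\nn$ and $\psi_\nn$, and your block computation $P\,I_\nn(\rchi_\om)=\begin{pmatrix}\Pi_{\nn,\om}\\0\end{pmatrix}$ is just a cleaner packaging of the paper's coordinate-by-coordinate verification that $PZ_\om(A_\nn)P^T$ has the asserted block form.
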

\begin{proof}
	Let $B_\nn = Z_\om(A_\nn)$ and $S^\om_\nn=R_\om(A_\nn)$.
	If we define the permutation matrix $P$ as
	\[
	P_{\ii,\jj} = 
	\begin{cases}
	\delta_{\jj,\phi_\nn(|\ii|)} & |\ii|\le d_n^\om,\\
	\delta_{\jj,\psi_\nn(|\ii|)} & |\ii|> d_n^\om,
	\end{cases}
	\]
	then the matrix $P B_\nn P^T$ can be written as
	\[
	P B_\nn P^T =
	\begin{pmatrix}
	S^\om_\nn & 0\\0 & 0
	\end{pmatrix}.
	\]
	In fact
	\begin{align*}
	(P B_\nn P^T)_{\ii,\jj} &=
	(P I_{\nn}(\rchi_\om)  A_{\nn} I_{\nn}(\rchi_\om)  P^T)_{\ii,\jj} \\
	&=
	\sum_{\bm k=\uu}^{\nn}\sum_{\bm h=\uu}^{\nn}
	P_{\ii,\bm k} (I_{\nn}(\rchi_\om) )_{\bm k,\bm k}
	(A_\nn)_{\bm k,\bm h}
	(I_{\nn}(\rchi_\om) )_{\bm h,\bm h}P_{\jj,\bm h} \\
	&= 
	\begin{cases} 
	(I_{\nn}(\rchi_\om) )_{\phi_\nn(|\ii|),\phi_\nn(|\ii|)}
	(A_\nn)_{\phi_\nn(|\ii|),\phi_\nn(|\jj|)}
	(I_{\nn}(\rchi_\om) )_{\phi_\nn(|\jj|),\phi_\nn(|\jj|)}
	& |\ii|\le d_n^\om, |\jj|\le d_n^\om\\
	(I_{\nn}(\rchi_\om) )_{\psi_\nn(|\ii|),\psi_\nn(|\ii|)}
	(A_\nn)_{\psi_\nn(|\ii|),\phi_\nn(|\jj|)}
	(I_{\nn}(\rchi_\om) )_{\phi_\nn(|\jj|),\phi_\nn(|\jj|)}
	& |\ii|> d_n^\om,|\jj|\le d_n^\om\\
	(I_{\nn}(\rchi_\om) )_{\phi_\nn(|\ii|),\phi_\nn(|\ii|)}
	(A_\nn)_{\phi_\nn(|\ii|),\psi_\nn(|\jj|)}
	(I_{\nn}(\rchi_\om) )_{\psi_\nn(|\jj|),\psi_\nn(|\jj|)}
	& |\ii|\le d_n^\om,|\jj|> d_n^\om\\
	(I_{\nn}(\rchi_\om) )_{\psi_\nn(|\ii|),\psi_\nn(|\ii|)}
	(A_\nn)_{\psi_\nn(|\ii|),\psi_\nn(|\jj|)}
	(I_{\nn}(\rchi_\om) )_{\psi_\nn(|\jj|),\psi_\nn(|\jj|)}
	& |\ii|> d_n^\om,|\jj|> d_n^\om
	\end{cases}\\
	&= 
	\begin{cases} 
	(A_\nn)_{\phi_\nn(|\ii|),\phi_\nn(|\jj|)}
	& |\ii|\le d_n^\om, |\jj|\le d_n^\om\\
	0& |\ii|> d_n^\om,|\jj|\le d_n^\om\\
	0& |\ii|\le d_n^\om,|\jj|> d_n^\om\\
	0& |\ii|> d_n^\om,|\jj|> d_n^\om
	\end{cases}
	\end{align*}
	and
	\begin{align*}
	(S^\om_\nn)_{i,j} = R_\om(A_\nn)&=
	(\Pi_{\nn,\om} A_{\nn} (\Pi_{\nn,\om})^T )_{i,j} \\
	&=
	\sum_{\bm k=\uu}^{\nn}\sum_{\bm h=\uu}^{\nn}
	(\Pi_{\nn,\om})_{i,\bm k} 
	(A_\nn)_{\bm k,\bm h}
	(\Pi_{\nn,\om})_{j,\bm h} \\
	&= \sum_{\bm k=\uu}^{\nn}\sum_{\bm h=\uu}^{\nn} (I_{\nn}(\rchi_\om) )_{\phi_\nn(i),\bm k}
	(A_\nn)_{\bm k,\bm h}
	(I_{\nn}(\rchi_\om) )_{\phi_\nn(j),\bm h}\\
	&= (A_\nn)_{\phi_\nn(i),\phi_\nn(j)}.
	\end{align*}
	The proof is thus concluded, since $S^\om_\nn$ has the same eigenvalues and singular values of $B_\nn$ except for $N(\nn) -d_n^\om$ zeros.
\end{proof}

\begin{corollary}\label{ker_expansion}
	There exists a permutation matrix $P$ of size $N(\nn)\times N(\nn)$ such that for every  matrix $S^\om_\nn$ of size $d_n^\om\times d_n^\om$,
	\[
	P E_\om(S^\om_\nn) P^T =
	\begin{pmatrix}
	S^\om_\nn & 0\\0 & 0
	\end{pmatrix}.
	\]
	In particular, $E_\om(S^\om_\nn)$
	has the same eigenvalues and singular values of the matrix $S^\om_\nn$
	except for $N(\nn) - d_n^\om$ null eigenvalues and singular values.
\end{corollary}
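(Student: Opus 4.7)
The plan is to derive this corollary as an almost immediate consequence of \autoref{ker_reduction} together with the identities collected in \autoref{MDREZ_relations}. The key observation is that $E_\om(S_\nn^\om)$ is itself a fixed point of $Z_\om$, so the block decomposition already established for matrices of the form $Z_\om(A_\nn)$ transfers directly.

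More concretely, I would set $A_\nn := E_\om(S_\nn^\om)$, an $N(\nn)\times N(\nn)$ matrix. By \autoref{MDREZ_relations} we have on one hand $Z_\om(A_\nn) = Z_\om(E_\om(S_\nn^\om)) = E_\om(S_\nn^\om) = A_\nn$, and on the other hand $R_\om(A_\nn) = R_\om(E_\om(S_\nn^\om)) = S_\nn^\om$. I would then invoke \autoref{ker_reduction} applied to this particular $A_\nn$: it produces the permutation matrix $P$ (the same one as in that lemma, depending only on $\nn$ and $\om$) such that
\[
P E_\om(S_\nn^\om) P^T = P Z_\om(A_\nn) P^T = \begin{pmatrix} R_\om(A_\nn) & 0 \\ 0 & 0 \end{pmatrix} = \begin{pmatrix} S_\nn^\om & 0 \\ 0 & 0 \end{pmatrix},
\]
which is exactly the claimed block form.

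The statement about eigenvalues and singular values is then a consequence of the block structure: a unitary (permutation) similarity does not alter the spectrum nor the singular values, so $E_\om(S_\nn^\om)$ shares the eigenvalues and singular values of the block diagonal matrix on the right, which in turn are those of $S_\nn^\om$ padded with $N(\nn)-d_n^\om$ zeros coming from the trailing zero block.

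There is really no hard step here; the only thing to double-check is that the permutation matrix from \autoref{ker_reduction} does not depend on the original matrix $A_\nn$ but only on $\phi_\nn$ and $\psi_\nn$, which was already the case in its construction. Hence the same $P$ works uniformly for every $S_\nn^\om$, which is what the statement requires.
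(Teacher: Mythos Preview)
Your proof is correct and follows essentially the same route as the paper: set $A_\nn = E_\om(S_\nn^\om)$, use \autoref{MDREZ_relations} to obtain $Z_\om(A_\nn)=A_\nn$ and $R_\om(A_\nn)=S_\nn^\om$, then apply \autoref{ker_reduction}. Your explicit remark that the permutation $P$ from \autoref{ker_reduction} depends only on $\phi_\nn,\psi_\nn$ (hence works uniformly for every $S_\nn^\om$) is a nice clarification that the paper leaves implicit.
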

\begin{proof}Let $A_\nn = E_\om(S^\om_\nn)$.
	Using \autoref{MDREZ_relations}, we get 
	\[
	S^\om_\nn  = R_\om(E_\om( S^\om_\nn )) = R_\om( A_\nn ), \qquad 
	Z_\om( A_\nn ) = Z_\om (E_\om( S^\om_\nn )) =E_\om( S^\om_\nn ) =  A_\nn .
	\]
	As a consequence, we can apply \autoref{ker_reduction} on $ A_\nn $ to find a permutation matrix $P$ such that 
	\[
	P  A_\nn P^T = 
	\begin{pmatrix}
	S^\om_\nn & 0\\0 & 0
	\end{pmatrix}
	\]
	so $S^\om_\nn$ has the same eigenvalues and singular values of $A_\nn$ except for $N(\nn) -d_n^\om$ zeros.
\end{proof}

\begin{corollary}\label{R_principal_submatrix}
	There exists a permutation matrix $P$ of size $N(\nn)\times N(\nn)$  such that 
	for every matrix $A_\nn$ of size $N(\nn)\times N(\nn)$,  
	\[
	P A_\nn P^T =
	\begin{pmatrix}
	R_\om(A_\nn) & *\\ * & *
	\end{pmatrix}.
	\]
\end{corollary}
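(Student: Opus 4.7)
The plan is to recycle exactly the permutation matrix $P$ constructed in \autoref{ker_reduction}, the one defined by
\[
P_{\ii,\jj} = \begin{cases} \delta_{\jj,\phi_\nn(|\ii|)} & |\ii|\le d_n^\om,\\ \delta_{\jj,\psi_\nn(|\ii|)} & |\ii|> d_n^\om. \end{cases}
\]
This $P$ does nothing more than sort the multi-indices so that those corresponding to grid points inside $\om$ come first (in the order fixed by $\phi_\nn$) and those outside come last (in the order fixed by $\psi_\nn$). Since $P$ depends only on $\nn$ and $\om$, it is independent of $A_\nn$, which is all the statement asks for.

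Next I would compute the top-left $d_n^\om\times d_n^\om$ block of $PA_\nn P^T$ directly from the definition. For $|\ii|,|\jj|\le d_n^\om$, a straightforward double sum gives
\[
(PA_\nn P^T)_{\ii,\jj} = \sum_{\bm k,\bm h}P_{\ii,\bm k}(A_\nn)_{\bm k,\bm h}P_{\jj,\bm h} = (A_\nn)_{\phi_\nn(|\ii|),\phi_\nn(|\jj|)},
\]
which is exactly $(R_\om(A_\nn))_{|\ii|,|\jj|}$ by the computation already carried out in the proof of \autoref{ker_reduction}. The remaining blocks of $PA_\nn P^T$ need not be controlled — the statement allows them to be arbitrary, so we simply label them with $*$.

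There is no real obstacle here: the whole point is that the identity $E_\om(R_\om(A_\nn)) = Z_\om(A_\nn)$ from \autoref{MDREZ_relations}, which was the heart of \autoref{ker_reduction}, only used the zero-ing out of the complementary rows and columns coming from $I_\nn(\rchi_\om)$. Without that zero-ing, the same permutation simply exposes $R_\om(A_\nn)$ as the leading principal submatrix while leaving the off-diagonal and bottom-right entries of $A_\nn$ untouched. Thus the corollary follows directly from the block computation above, and a one-line proof referring back to \autoref{ker_reduction} suffices.
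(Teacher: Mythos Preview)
Your proposal is correct and uses the same permutation matrix $P$ from \autoref{ker_reduction} as the paper does. The only stylistic difference is that you verify the top-left block by direct entry-wise computation, whereas the paper argues via block algebra: it first shows $P\,I_\nn(\rchi_\om)\,P^T=\begin{pmatrix}I_\nn^\om&0\\0&0\end{pmatrix}$, then sandwiches $PA_\nn P^T$ between these projectors to recover $\begin{pmatrix}R_\om(A_\nn)&0\\0&0\end{pmatrix}=PZ_\om(A_\nn)P^T$, forcing the top-left block of $PA_\nn P^T$ to be $R_\om(A_\nn)$.
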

\begin{proof}
	using \autoref{MDREZ_relations},
	\[
	Z_\om (I_{\nn}(\rchi_\om) ) = I_{\nn}(\rchi_\om) I_{\nn}(\rchi_\om) I_{\nn}(\rchi_\om) =I_{\nn}(\rchi_\om) ,\]\[
	R_\om(I_{\nn}(\rchi_\om) ) = \Pi_{\nn,\om} I_{\nn}(\rchi_\om) (\Pi_{\nn,\om})^T=
	\Pi_{\nn,\om}(\Pi_{\nn,\om})^T\Pi_{\nn,\om}(\Pi_{\nn,\om})^T=
	I_\nn^\om I_\nn^\om    = I_\nn^\om ,
	\]
	so \autoref{ker_reduction} shows that there exists $P$ such that
	\[
	PI_{\nn}(\rchi_\om) P^T = PZ_\om(I_{\nn}(\rchi_\om) ) P^T = \begin{pmatrix}
	R_\om(I_{\nn}(\rchi_\om) ) &0\\0&0
	\end{pmatrix}
	=
	\begin{pmatrix}
	I_\nn^\om  &0\\0&0
	\end{pmatrix}.
	\]
	As a consequence, we have that
	\begin{align*}
	\begin{pmatrix}
	R_\om(A_\nn) &0\\0&0
	\end{pmatrix}
	&=
	PZ_\om(A_\nn) P^T \\
	&= P I_{\nn}(\rchi_\om) A_\nn I_{\nn}(\rchi_\om) P^T\\
	&= P I_{\nn}(\rchi_\om) P^T PA_\nn P^TP I_{\nn}(\rchi_\om) P^T\\
	&=
	\begin{pmatrix}
	I_\nn^\om  &0\\0&0
	\end{pmatrix}
	PA_\nn P^T
	\begin{pmatrix}
	I_\nn^\om  &0\\0&0
	\end{pmatrix}\\
	\implies 
	\begin{pmatrix}
	R_\om(A_\nn) &*\\ *&* 
	\end{pmatrix}&= PA_\nn P^T
	\end{align*}
	
\end{proof}

\subsection{Effects on the Symbols}

We have seen how $R_\om,E_\om$ modify the sequences of matrices. Now we focus on how the symbols change though these operators. 
Let us start with the reduction operator $R_\om$.

\begin{lemma}\label{general_reduced_symbol}
	Let $\{A_\nn\}_n$ be a sequence with $A_\nn$ of size $N(\nn)\times N(\nn)$ that is a fixed point for the operator $Z_\om$, and 
	let $k:[0,1]^d\times[-\pi,\pi]^d\to \f C$ be a measurable function 
	with $k(x,\theta)|_{x\not\in \om} = 0$. 
	If $\ms A\sim_\sigma k$, then
	\[
	R_\om(\{A_\nn\}_n)\sim_\sigma k(x,\theta)|_{x\in \om}.
	\] 
	If $\ms A\sim_\lambda k$, then
	\[
	R_\om(\{A_\nn\}_n)\sim_\lambda k(x,\theta)|_{x\in \om}.
	\] 
\end{lemma}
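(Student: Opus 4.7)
The plan is to exploit Lemma \ref{ker_reduction}, which tells us that since $A_\nn$ is a fixed point of $Z_\om$ (i.e., $Z_\om(A_\nn)=A_\nn$), the matrix $A_\nn$ has exactly the same spectrum (singular values, and eigenvalues) as $R_\om(A_\nn)$, up to an extra bunch of $N(\nn)-d_n^\om$ zeros. So for any $F\in C_c(\mathbb R)$ (resp. $F\in C_c(\mathbb C)$ in the spectral case) we have the bookkeeping identity
\[
\sum_{i=1}^{N(\nn)} F(\sigma_i(A_\nn)) = \sum_{i=1}^{d_n^\om} F(\sigma_i(R_\om(A_\nn))) + (N(\nn)-d_n^\om)\,F(0),
\]
and similarly with $\lambda_i$ in place of $\sigma_i$.

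Next I divide by $d_n^\om$ and read off the limit. The hypothesis $\ms A\svs k$ (resp. $\ms A\es k$) gives the limit of $N(\nn)^{-1}\sum F(\sigma_i(A_\nn))$, and \autoref{dimension_chi} provides $d_n^\om/N(\nn)\to\mu(\om)>0$, so multiplying by $N(\nn)/d_n^\om$ turns this into
\[
\lim_{n\to\infty}\frac{1}{d_n^\om}\sum_{i=1}^{N(\nn)} F(\sigma_i(A_\nn)) = \frac{1}{\mu(\om)(2\pi)^d}\int_{[0,1]^d\times[-\pi,\pi]^d} F(|k(x,\theta)|)\,dx\,d\theta.
\]
At the same time the trailing term contributes $(N(\nn)-d_n^\om)/d_n^\om\cdot F(0)\to (1-\mu(\om))/\mu(\om)\cdot F(0)$.

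The key simplification is the vanishing hypothesis on $k$: since $k(x,\theta)=0$ for $x\notin\om$, the integral over $[0,1]^d\times[-\pi,\pi]^d$ splits into an integral over $\om\times[-\pi,\pi]^d$ plus an $F(0)$-contribution from the complement equal to $(1-\mu(\om))(2\pi)^d F(0)$. This contribution cancels exactly against the $(N(\nn)-d_n^\om)/d_n^\om\cdot F(0)$ term, leaving
\[
\lim_{n\to\infty}\frac{1}{d_n^\om}\sum_{i=1}^{d_n^\om} F(\sigma_i(R_\om(A_\nn))) = \frac{1}{\mu(\om)(2\pi)^d}\int_{\om\times[-\pi,\pi]^d} F(|k(x,\theta)|)\,dx\,d\theta.
\]
Since the Lebesgue measure of $\om\times[-\pi,\pi]^d$ is precisely $\mu(\om)(2\pi)^d$, this is exactly the defining relation for $R_\om(\ms A)\svs k(x,\theta)|_{x\in\om}$. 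The eigenvalue statement is identical: replace $\sigma_i$ by $\lambda_i$ and $|k|$ by $k$ throughout, with no change in the bookkeeping.

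There is no real obstacle here; the only care needed is to keep the normalizing constants straight and to invoke the hypothesis $k|_{x\notin\om}\equiv 0$ at exactly the right moment so that the spurious $F(0)$ terms coming from the zero-padding in \autoref{ker_reduction} are absorbed by the spurious $F(0)$ term coming from integrating over $([0,1]^d\setminus\om)\times[-\pi,\pi]^d$.
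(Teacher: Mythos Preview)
Your proof is correct and follows essentially the same approach as the paper: invoke \autoref{ker_reduction} (valid since $A_\nn=Z_\om(A_\nn)$) to relate the spectra of $A_\nn$ and $R_\om(A_\nn)$ up to $N(\nn)-d_n^\om$ zeros, use \autoref{dimension_chi} to handle the ratio $N(\nn)/d_n^\om$, and exploit the vanishing of $k$ outside $\om$ so that the $F(0)$ contributions cancel. The paper's writeup is organized slightly differently but the argument is the same.
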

\begin{proof}
	Suppose that $\ms A\sim_\sigma k$. Consider any continuous function $F:\f R\to \f C$ with compact support, and call $S^\om_\nn=R_\om( A_\nn)$. By hypothesis $ A_\nn=Z_\om(A_\nn)$, so we can use \autoref{ker_reduction}  and obtain
	\begin{align*} 
	\frac{1}{d_{n}^\om} \sum_{i=1}^{d_n^\om} F(\sigma_i(S^\om_\nn))	 
	&= 	 \frac{N(\nn)}{d_{n}^\om}\frac 1{N(\nn)} \sum_{i=1}^{N(\nn)} F(\sigma_i( A_\nn))
	- 
	\frac{N(\nn)-d_n^\om}{d_{n}^\om} F(0) .
	\end{align*}
	Notice that $\ms A\sim_\sigma k(x,\theta) = k(x,\theta)\rchi_\om(x)$, so  \autoref{dimension_chi} shows that 
	\begin{align*}
	\lim_{n\to\infty}\frac{1}{d_{n}^\om} \sum_{i=1}^{d_n^\om} F(\sigma_i(S^\om_\nn)) &=\lim_{n\to\infty}	 \frac{N(\nn)}{d_{n}^\om}\frac 1{N(\nn)} \sum_{i=1}^{N(\nn)} F(\sigma_i(A_\nn))
	- \lim_{n\to\infty}
	\frac{N(\nn)-d_n^\om}{d_{n}^\om} F(0)\\
	&=\frac 1{\mu(\om)}
	\frac{1}{(2\pi)^d}\int_{[0,1]^d\times [-\pi,\pi]^d} F(|k(x,\theta)|) d(x,\theta)
	- \frac {1-\mu(\om)}{\mu(\om)} F(0) \\
	&= \frac 1{\mu(\om)(2\pi)^d}
	\int_{\om\times [-\pi,\pi]^d} F(|k(x,\theta)|) d(x,\theta)
	+
	\frac {\mu(\om^C)}{\mu(\om)}F(0)
	- \frac {1-\mu(\om)}{\mu(\om)}F(0)\\
	&= 
	\frac 1{\mu(\om\times [-\pi,\pi]^d)}
	\int_{\om\times [-\pi,\pi]^d} F(|k(x,\theta)|) d(x,\theta).
	\end{align*}
	The last formula holds for every continuous function $F$ with compact support, so
	\[
	R_\om(\{A_\nn\}_n)=\ms {S^\om}\sim_\sigma k(x,\theta)|_{x\in \om}.
	\] 
	If we suppose $\ms A\sim_\lambda k$, the proof is analogous. Consider any continuous and compact supported function $F:\f C\to \f C$ and use \autoref{ker_reduction} to show that 
	\begin{align*}
	\frac{1}{d_{n}^\om} \sum_{i=1}^{d_n^\om} F(\lambda_i(S^\om_\nn ))
	&= 	 \frac{N(\nn)}{d_{n}^\om}\frac 1{N(\nn)} \sum_{i=1}^{N(\nn)} F(\lambda_i( A_\nn)))
	- 
	\frac{N(\nn)-d_n^\om}{d_{n}^\om} F(0),
	\end{align*}%TODO controllare tutti gli autoref per far sì che non ci siano 'lemma' o simili ripetuti
	and exploiting  $\ms B\sim_\lambda k(x,\theta) = k(x,\theta)\rchi_\om(x)$ and   \autoref{dimension_chi}, we conclude that
	\begin{align*}
	\lim_{n\to\infty}\frac{1}{d_{n}^\om} \sum_{i=1}^{d_n^\om} F(\lambda_i(S^\om_\nn )) &=\lim_{n\to\infty}	 \frac{N(\nn)}{d_{n}^\om}\frac 1{N(\nn)} \sum_{i=1}^{N(\nn)} F(\lambda_i( A_\nn))
	- \lim_{n\to\infty}
	\frac{N(\nn)-d_n^\om}{d_{n}^\om} F(0)\\
	&=\frac 1{\mu(\om)}
	\frac{1}{(2\pi)^d}\int_{[0,1]^d\times [-\pi,\pi]^d} F(k(x,\theta)) d(x,\theta)
	- \frac {1-\mu(\om)}{\mu(\om)} F(0) \\
	&= \frac 1{\mu(\om)(2\pi)^d}
	\int_{\om\times [-\pi,\pi]^d} F(k(x,\theta)) d(x,\theta)
	+
	\frac {\mu(\om^C)}{\mu(\om)}F(0)
	- \frac {1-\mu(\om)}{\mu(\om)}F(0)\\
	&= 
	\frac 1{\mu(\om\times [-\pi,\pi]^d)}
	\int_{\om\times [-\pi,\pi]^d} F(k(x,\theta)) d(x,\theta).
	\end{align*}
	The last formula holds for every continuous function $F$ with compact support, so
	\[
	R_\om(\{A_\nn\}_n)=\ms {S^\om}\sim_\lambda k(x,\theta)|_{x\in \om}.
	\] 
\end{proof}

On the contrary let us analyse the effects of the extension operator $E_\om$.

\begin{lemma}\label{general_extended_symbol}
	Let $\{S^\om_\nn\}_n$ be a sequence with $S^\om_\nn$ of size $d_n^\om\times d_n^\om$, let $\kappa:\om\times[-\pi,\pi]^d\to \f C$ be a measurable function, and define
	\[
	\kappa'(x,\theta) = 
	\begin{cases}
	\kappa(x,\theta) & x\in \om,\\
	0 & x\in [0,1]^d\setminus\om.
	\end{cases}
	\]
	If $\ms {S^\om}\sim_\sigma k$, then
	\[
	E_\om(\{S^\om_\nn\}_n)\sim_\sigma \kappa'(x,\theta).
	\] 
	If $\ms {S^\om}\sim_\lambda \kappa$, then
	\[
	E_\om(\{S^\om_\nn\}_n)\sim_\lambda \kappa'(x,\theta).
	\] 
\end{lemma}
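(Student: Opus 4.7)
The proof is essentially the mirror image of \autoref{general_reduced_symbol}, and the key tool is \autoref{ker_expansion}, which asserts that $E_\om(S^\om_\nn)$ is unitarily similar to the block matrix $\operatorname{diag}(S^\om_\nn, 0)$ of size $N(\nn)\times N(\nn)$. Consequently the singular values (resp.\ eigenvalues) of $E_\om(S^\om_\nn)$ are exactly those of $S^\om_\nn$ together with $N(\nn)-d_n^\om$ additional zeros. This converts a spectral/singular-value average over the larger index set into a weighted combination of the average over the smaller one plus a term in $F(0)$.

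Concretely, I will fix a continuous compactly supported $F$ and write, in the singular-value case,
\[
\frac{1}{N(\nn)}\sum_{i=1}^{N(\nn)} F(\sigma_i(E_\om(S^\om_\nn))) = \frac{d_n^\om}{N(\nn)}\cdot\frac{1}{d_n^\om}\sum_{i=1}^{d_n^\om} F(\sigma_i(S^\om_\nn)) + \frac{N(\nn)-d_n^\om}{N(\nn)}\,F(0).
\]
Then I will pass to the limit: by \autoref{dimension_chi} the coefficients tend to $\mu(\om)$ and $1-\mu(\om)$ respectively, and by the hypothesis $\{S^\om_\nn\}_n \sim_\sigma \kappa$ the inner average converges to $\frac{1}{\mu(\om)(2\pi)^d}\int_{\om\times[-\pi,\pi]^d} F(|\kappa(x,\theta)|)\,d(x,\theta)$.

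Combining these two pieces, the limit becomes
\[
\frac{1}{(2\pi)^d}\int_{\om\times[-\pi,\pi]^d} F(|\kappa(x,\theta)|)\,d(x,\theta) + (1-\mu(\om))F(0),
\]
and then I observe that the second summand is exactly $\frac{1}{(2\pi)^d}\int_{([0,1]^d\setminus \om)\times[-\pi,\pi]^d} F(|\kappa'(x,\theta)|)\,d(x,\theta)$ since $\kappa'\equiv 0$ outside $\om\times[-\pi,\pi]^d$ and $\mu([0,1]^d\setminus\om)=1-\mu(\om)$. Adding the two integrals gives $\frac{1}{(2\pi)^d}\int_{[0,1]^d\times[-\pi,\pi]^d}F(|\kappa'|)\,d(x,\theta)$, which is the defining identity for $E_\om(\{S^\om_\nn\}_n)\sim_\sigma \kappa'$.

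The eigenvalue statement follows by the same argument verbatim: replace $\sigma_i$ by $\lambda_i$ and $|\kappa|$, $|\kappa'|$ by $\kappa$, $\kappa'$, and use any continuous compactly supported $F:\mathbb C\to\mathbb C$. I do not expect any real obstacle here; the only bookkeeping care is to ensure that the factor $\mu([0,1]^d)=1$ is used correctly so that $1-\mu(\om)=\mu([0,1]^d\setminus\om)$, which is precisely what makes the ``boundary'' term $F(0)$ reassemble into the missing part of the integral of $F\circ|\kappa'|$.
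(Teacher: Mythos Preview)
Your proposal is correct and follows essentially the same route as the paper: both proofs invoke \autoref{ker_expansion} to split the $N(\nn)$-average into the $d_n^\om$-average plus an $F(0)$ term, then use \autoref{dimension_chi} to pass to the limit and reassemble the result as the integral of $F(|\kappa'|)$ over $[0,1]^d\times[-\pi,\pi]^d$. The only cosmetic difference is that you combine the two pieces by directly interpreting $(1-\mu(\om))F(0)$ as the integral over the complement, whereas the paper first writes the full integral and then cancels the complementary $F(0)$ contributions; the content is identical.
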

\begin{proof}
	Suppose that $\ms {S^\om}\sim_\sigma \kappa$, and denote $\ms A=E_\om(\ms {S^\om})$. If we consider any continuous function $F:\f R\to \f C$ with compact support, then we can use  \autoref{ker_expansion} on $\ms {S^\om}$ to obtain
	\begin{align*} 
	\frac{1}{N(\nn)} \sum_{i=1}^{N(\nn)} F(\sigma_i(A_\nn))	 
	&= 	 \frac{d_{n}^\om}{N(\nn)}\frac 1{d_{n}^\om} \sum_{i=1}^{d_{n}^\om} F(\sigma_i( S^\om_\nn))
	+ 
	\frac{N(\nn)-d_n^\om}{N(\nn)} F(0) .
	\end{align*}
	As a consequence of  \autoref{dimension_chi}, we can show that 
	\begin{align*}
	\lim_{n\to\infty}\frac{1}{N(\nn)} \sum_{i=1}^{N(\nn)} F(\sigma_i(A_\nn)) &=\lim_{n\to\infty}	 \frac{d_{n}^\om}{N(\nn)}\frac 1{d_{n}^\om} \sum_{i=1}^{d_{n}^\om} F(\sigma_i( S^\om_\nn))
	+ \lim_{n\to\infty}	
	\frac{N(\nn)-d_n^\om}{N(\nn)} F(0)\\
	&=\mu(\om)
	\frac{1}{\mu(\om\times[-\pi,\pi]^d)}\int_{\om\times [-\pi,\pi]^d} F(|\kappa(x,\theta)|) d(x,\theta)
	+ (1-\mu(\om)) F(0) \\
	&= \frac 1{(2\pi)^d}
	\int_{[0,1]^d\times [-\pi,\pi]^d} F(|\kappa'(x,\theta)|) d(x,\theta)
	-
	\frac {\mu(\om^C\times[-\pi,\pi]^d)}{(2\pi)^d}F(0)
	+ (1-\mu(\om)) F(0)\\
	&= 
	\frac 1{\mu([0,1]^d\times [-\pi,\pi]^d)}
	\int_{\om\times [-\pi,\pi]^d} F(|\kappa'(x,\theta)|) d(x,\theta).
	\end{align*}
	The last formula holds for every continuous function $F$ with compact support, so
	\[
	E_\om(\{S^\om_\nn\}_n)=\ms A\sim_\sigma \kappa'(x,\theta).
	\] 
	If we suppose $\ms {S^\om}\sim_\lambda \kappa$, the proof is analogous.  If we consider any continuous function $F:\f C\to \f C$ with compact support, then we can use  \autoref{ker_expansion} on $\ms {S^\om}$ to obtain
	\begin{align*} 
	\frac{1}{N(\nn)} \sum_{i=1}^{N(\nn)} F(\lambda_i(A_\nn))	 
	&= 	 \frac{d_{n}^\om}{N(\nn)}\frac 1{d_{n}^\om} \sum_{i=1}^{d_{n}^\om} F(\lambda_i( S^\om_\nn))
	+ 
	\frac{N(\nn)-d_n^\om}{N(\nn)} F(0) .
	\end{align*}
	As a consequence of  \autoref{dimension_chi}, we can show that 
	\begin{align*}
	\lim_{n\to\infty}\frac{1}{N(\nn)} \sum_{i=1}^{N(\nn)} F(\lambda_i(A_\nn)) &=\lim_{n\to\infty}	 \frac{d_{n}^\om}{N(\nn)}\frac 1{d_{n}^\om} \sum_{i=1}^{d_{n}^\om} F(\lambda_i( S^\om_\nn))
	+ \lim_{n\to\infty}	
	\frac{N(\nn)-d_n^\om}{N(\nn)} F(0)\\
	&=\mu(\om)
	\frac{1}{\mu(\om\times[-\pi,\pi]^d)}\int_{\om\times [-\pi,\pi]^d} F(\kappa(x,\theta)) d(x,\theta)
	+ (1-\mu(\om)) F(0) \\
	&= \frac 1{(2\pi)^d}
	\int_{[0,1]^d\times [-\pi,\pi]^d} F(\kappa'(x,\theta)) d(x,\theta)
	-
	\frac {\mu(\om^C\times[-\pi,\pi]^d)}{(2\pi)^d}F(0)
	+ (1-\mu(\om)) F(0)\\
	&= 
	\frac 1{\mu([0,1]^d\times [-\pi,\pi]^d)}
	\int_{\om\times [-\pi,\pi]^d} F(\kappa'(x,\theta)) d(x,\theta).
	\end{align*}
	The last formula holds for every continuous function $F$ with compact support, so
	\[
	E_\om(\{S^\om_\nn\}_n)=\ms A\sim_\lambda \kappa'(x,\theta).
	\] 
\end{proof}

\subsection{Effects on the Convergence}

When dealing with the space of matrix sequences, we already know that it is a complete pseudometric space, equipped with the a.c.s. convergence and the distance
\[
\dacs{\ms{A}}{\ms{B}} = \limsup_{n\to \infty} p(A_\nn-B_\nn), \qquad
p(C_\nn)= \min_{i=1,\dots,N(\nn)+1}\left\{ \frac{i-1}{N(\nn)} + \sigma_i(C_\nn) \right\}.
\]
The operators $R_\om$ and $E_\om$ link two different matrix sequence spaces, so we can analyse how they affect the metrics and the convergences. 

\begin{lemma}\label{zero_distributed_RE}
	Given a sequence $\ms {S^\om}$ with $S^\om_\nn$ of size $d_n^\om\times d_n^\om$ and a sequence $\ms A$ with $A_\nn$ of size $\snnn$, we have
	\[
	\ms {S^\om}\sim_\sigma 0\implies E_\om(\ms {S^\om})\sim_\sigma 0,\qquad \ms {S^\om}\sim_\lambda 0\implies E_\om(\ms {S^\om})\sim_\lambda 0,
	\]
	\[
	\ms A\sim_\sigma 0\implies R_\om(\ms A)\sim_\sigma 0,\qquad \ms A\sim_\lambda 0\implies R_\om(\ms A)\sim_\lambda 0.
	\]
\end{lemma}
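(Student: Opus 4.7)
The four implications split naturally into two pairs by operator. For the expansion operator $E_\om$, both cases follow at once from Corollary \ref{ker_expansion}: since $E_\om(S^\om_\nn)$ is permutation-similar to $\diag(S^\om_\nn,0)$, its singular values (respectively eigenvalues) coincide with those of $S^\om_\nn$ padded with $N(\nn)-d_n^\om$ extra zeros. For any $F\in C_c$,
\[
\frac{1}{N(\nn)}\sum_{i=1}^{N(\nn)}F(\sigma_i(E_\om(S^\om_\nn))) = \frac{d_n^\om}{N(\nn)}\cdot\frac{1}{d_n^\om}\sum_{i=1}^{d_n^\om}F(\sigma_i(S^\om_\nn)) + \frac{N(\nn)-d_n^\om}{N(\nn)}F(0).
\]
Corollary \ref{dimension_chi} gives $d_n^\om/N(\nn)\to\mu(\om)$, the hypothesis collapses the inner sum to $F(0)$, and the limit is $\mu(\om)F(0)+(1-\mu(\om))F(0)=F(0)$. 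The eigenvalue case is identical with $F\in C_c(\mathbb{C})$ and $\sigma_i$ replaced by $\lambda_i$.

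For the reduction operator $R_\om$ on singular values, I would invoke the decomposition characterization \textbf{Z1}: write $A_\nn=R_\nn+N_\nn$ with $\rk(R_\nn)/N(\nn)\to 0$ and $\|N_\nn\|\to 0$. By linearity of $R_\om$, $R_\om(A_\nn)=R_\om(R_\nn)+R_\om(N_\nn)$; since $R_\om$ is two-sided multiplication by $\Pi_{\nn,\om}$, rank sub-multiplicativity gives $\rk(R_\om(R_\nn))\le\rk(R_\nn)$, while Lemma \ref{norm_reduction} gives $\|R_\om(N_\nn)\|\le\|N_\nn\|\to 0$. Dividing the rank bound by $d_n^\om$ and using $d_n^\om/N(\nn)\to\mu(\om)>0$ (the standing assumption $\mu(\om)>0$ is essential here), the rank-to-size ratio also tends to zero, so a second application of \textbf{Z1} yields $R_\om(A_\nn)\sim_\sigma 0$.

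The reduction case on eigenvalues is the main obstacle, since \textbf{Z1} characterizes only $\sim_\sigma 0$. My plan is to reduce to the $E_\om$ case just treated: by Lemma \ref{MDREZ_relations} one has $E_\om(R_\om(A_\nn))=Z_\om(A_\nn)$, and running the $E_\om$ computation backwards via Corollary \ref{ker_expansion} shows $R_\om(A_\nn)\sim_\lambda 0$ if and only if $Z_\om(A_\nn)\sim_\lambda 0$. It therefore suffices to show that the two-sided compression $Z_\om(A_\nn)=I_\nn(\rchi_\om)A_\nn I_\nn(\rchi_\om)$ preserves $\sim_\lambda 0$. The difficulty is that $A_\nn-Z_\om(A_\nn)$ has rank of order $(1-\mu(\om))N(\nn)$ --- macroscopic rather than low --- so a plain perturbation argument is unavailable. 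I would handle this last step via the block identification from Corollary \ref{R_principal_submatrix}, which exhibits $R_\om(A_\nn)$ as a principal block after a suitable permutation of $A_\nn$, together with the Peano-Jordan regularity of $\om$ from Corollary \ref{preliminaries_near_border_points_chi}, to control how eigenvalue mass is transferred to the zero block.
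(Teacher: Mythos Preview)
Your treatment of the two $E_\om$ implications and of $\ms A\sim_\sigma 0\Rightarrow R_\om(\ms A)\sim_\sigma 0$ is correct. The paper's proof is a one-line appeal to Lemmas~\ref{general_extended_symbol} and~\ref{general_reduced_symbol}; your $E_\om$ argument simply unpacks Lemma~\ref{general_extended_symbol} at $\kappa=0$, which is the intended reading. For the singular-value restriction you go through the \textbf{Z\,1} splitting directly, whereas the paper's route would pass through $R_\om(\ms A)=R_\om(Z_\om(\ms A))$ and apply Lemma~\ref{general_reduced_symbol} to the fixed point $Z_\om(\ms A)$, which inherits $\sim_\sigma 0$ because $I_\nn(\rchi_\om)$ is a contraction. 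Both routes are valid.

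The fourth implication, $\ms A\sim_\lambda 0\Rightarrow R_\om(\ms A)\sim_\lambda 0$, is \emph{false} as stated, so neither your proposed salvage nor the paper's one-liner can close it. Take $d=1$, $\om=[0,\tfrac12]$, and even sizes $\nn=2m$; then $d_n^\om=m$ and $R_\om$ extracts the top-left $m\times m$ block. With
\[
A_\nn=\begin{pmatrix} I_m & I_m\\ -I_m & -I_m\end{pmatrix}
\]
one checks $A_\nn^2=0$, so every eigenvalue vanishes and $\ms A\sim_\lambda 0$; yet $R_\om(A_\nn)=I_m$, giving $R_\om(\ms A)\sim_\lambda 1$. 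Lemma~\ref{general_reduced_symbol} does not apply here because its hypothesis $Z_\om(A_\nn)=A_\nn$ fails, and your plan to invoke Corollary~\ref{R_principal_submatrix} together with near-boundary estimates cannot help: principal submatrices of non-normal matrices obey no eigenvalue interlacing, and the obstruction is purely algebraic, not a boundary effect. The implication \emph{does} hold under either extra hypothesis $Z_\om(\ms A)=\ms A$ or $A_\nn$ Hermitian, and only such restricted forms are actually needed downstream in the paper, so the right move is to flag the stated claim rather than to search for a proof.
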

\begin{proof}
	Easy corollary of \autoref{general_extended_symbol} and \autoref{general_reduced_symbol}.
\end{proof}

\begin{lemma}\label{metric_reduction}
	Given two sequences $\ms A$ and $\ms B$ with matrices of size $\snnn$,
	\[
	\dacs{\ms A}{\ms B} \ge \mu(\om) \dacs{R_\om(\ms A)}{R_\om(\ms B)}.
	\]
	In particular, 
	\[
	\{B_{\nn,m}\}_n \acs\ms A \implies R_\om(\{B_{\nn,m}\}_n) \acs R_\om(\ms A).
	\]
\end{lemma}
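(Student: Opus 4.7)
The plan is to prove the inequality pointwise in $\nn$ and then pass to the $\limsup$. Since $R_\om$ is linear, setting $C_\nn := A_\nn - B_\nn$ gives $R_\om(A_\nn) - R_\om(B_\nn) = R_\om(C_\nn)$, so it is enough to show that
\[
\mu(\om)\, p(R_\om(C_\nn)) \le p(C_\nn) + \varepsilon_n,
\]
where $\varepsilon_n$ depends only on $\nn$ and $\om$ and tends to zero as $n\to\infty$.

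The two ingredients I need are the singular-value interlacing $\sigma_j(R_\om(C_\nn)) \le \sigma_j(C_\nn)$ for $j=1,\dots,d_n^\om$, already established in the proof of \autoref{norm_reduction}, together with the ratio limit $d_n^\om/N(\nn) \to \mu(\om)$ provided by \autoref{dimension_chi}. I let $i^*$ achieve the minimum in the definition of $p(C_\nn)$ and split into two cases. If $i^* \le d_n^\om$, I plug $j=i^*$ into the defining minimum of $p(R_\om(C_\nn))$, dominate $\sigma_{i^*}(R_\om(C_\nn))$ by $\sigma_{i^*}(C_\nn)$, and observe that
\[
\mu(\om)\,p(R_\om(C_\nn)) - p(C_\nn) \le (\mu(\om)-1)\sigma_{i^*}(C_\nn) + (i^*-1)\left(\tfrac{\mu(\om)}{d_n^\om} - \tfrac{1}{N(\nn)}\right) \le \bigl|\mu(\om) - d_n^\om/N(\nn)\bigr|,
\]
using $\mu(\om)\le 1$ to drop the first summand and $i^*-1\le d_n^\om$ to control the ratio. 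If instead $i^* > d_n^\om$, the trivial upper bound $p(R_\om(C_\nn))\le 1$ (attained by taking $j=d_n^\om+1$, since then $\sigma_{d_n^\om+1}(R_\om(C_\nn))=0$ by convention) combined with the lower bound $p(C_\nn) \ge (i^*-1)/N(\nn) \ge d_n^\om/N(\nn)$ yields the same control $\mu(\om)\,p(R_\om(C_\nn)) - p(C_\nn) \le \mu(\om) - d_n^\om/N(\nn)$. In either case $\varepsilon_n \le |\mu(\om) - d_n^\om/N(\nn)|$, which tends to zero, and passing to $\limsup_{n\to\infty}$ in the pointwise inequality gives the main statement.

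The corollary is then immediate from the standing hypothesis $\mu(\om)>0$: dividing by $\mu(\om)$ shows that if $d_{\rm a.c.s.}(\{B_{\nn,m}\}_n,\ms A)\to 0$ as $m\to\infty$, then $d_{\rm a.c.s.}(R_\om(\{B_{\nn,m}\}_n),R_\om(\ms A))\to 0$ as well, i.e.\ $R_\om(\{B_{\nn,m}\}_n)\acs R_\om(\ms A)$. I do not foresee any conceptual obstacle; the one point requiring care is that the naive substitution $j=i^*$ fails when $i^*>d_n^\om$, so that tail case must be treated separately by pairing the absolute bound $p(R_\om(C_\nn))\le 1$ with the lower bound on $p(C_\nn)$ forced by the position of the minimizer.
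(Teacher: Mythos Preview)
Your proof is correct and follows essentially the same approach as the paper's: both arguments rest on the singular-value interlacing $\sigma_i(R_\om(C_\nn))\le\sigma_i(C_\nn)$ together with $d_n^\om/N(\nn)\to\mu(\om)$ from \autoref{dimension_chi}. The paper packages the case split as a single chain of inequalities on the minimum (handling the tail $i>d_n^\om$ via the bound $\min\{\cdot,1\}$), while you make the split on the minimizer $i^*$ explicit and carry a vanishing error term; the two presentations are equivalent.
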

\begin{proof}
	Let $P$ be the permutation matrix in Corollary \autoref{R_principal_submatrix}, so that 
	\[
	P(A_\nn - B_\nn)P^T = \begin{pmatrix}
	R_\om(A_\nn-B_\nn) & *\\
	* &*
	\end{pmatrix}.
	\]
	Using the Cauchy interlacing theorem %TODO SCRIVERE quale
	for singular values, we get that $\sigma_i(R_\om(A_\nn-B_\nn))\le \sigma_i(P(A_\nn-B_\nn)P^T) =\sigma_i(A_\nn-B_\nn) $ for every $1\le i\le d_n^\om$. We can thus use the definition of $d_{acs}$ and  \autoref{dimension_chi} to obtain
	\begin{align*}
	\dacs{\ms A}{\ms B} &= \limsup_{n\to \infty} \min_{i=1,\dots,N(\nn)+1}\left\{ \frac{i-1}{N(\nn)} + \sigma_i(A_\nn-B_\nn) \right\}\\
	&= 
	\limsup_{n\to \infty} \min_{i=1,\dots,N(\nn)+1}\left\{ \frac{i-1}{d_n^\om}\frac{d_n^\om}{N(\nn)} + \sigma_i(A_\nn-B_\nn)\right\}\\
	&\ge \limsup_{n\to \infty}\frac{d_n^\om}{N(\nn)} \min_{i=1,\dots,N(\nn)+1}\left\{ \frac{i-1}{d_n^\om} + \sigma_i(A_\nn-B_\nn)\right\}\\
	&\ge \mu(\om)\limsup_{n\to \infty}\min\left\{
	\min_{i=1,\dots,d_n^\om}\left\{ \frac{i-1}{d_n^\om} + \sigma_i(A_\nn-B_\nn)\right\},
	1
	\right\}\\
	&\ge \mu(\om)\limsup_{n\to \infty}\min\left\{
	\min_{i=1,\dots,d_n^\om}\left\{ \frac{i-1}{d_n^\om} + \sigma_i(R_\om(A_\nn-B_\nn))\right\},
	1
	\right\}\\
	&= 	\mu(\om)\limsup_{n\to \infty}
	\min_{i=1,\dots,d_n^\om+1}\left\{ \frac{i-1}{d_n^\om} + \sigma_i(R_\om(A_\nn-B_\nn))\right\}\\
	&=\mu(\om) \dacs{R_\om(\ms A)}{R_\om(\ms B)}.
	\end{align*}
	Consequentially,
	\begin{align*}
	\{B_{\nn,m}\}_n \acs\ms A &\iff  \dacs{\{B_{\nn,m}\}_n}{\ms A }\to 0\\
	&\implies \dacs{R_\om(\{B_{\nn,m}\}_n)}{R_\om(\ms A) }\to 0\\
	&\iff
	R_\om(\{B_{\nn,m}\}_n) \acs R_\om(\ms A).
	\end{align*}
\end{proof}

\begin{lemma}\label{metric_extension}
	Given two sequences $\mso A$ and $\mso B$ with matrices of size $\sdno$,
	\[
	\dacs{\mso A}{\mso B} \ge \dacs{E_\om(\mso A)}{E_\om(\mso B)}\ge  \mu(\om)\dacs{\mso A}{\mso B}.
	\]
	In particular, 
	\[
	\{B^\om_{\nn,m}\}_n \acs\mso A \iff  E_\om(\{B^\om_{\nn,m}\}_n) \acs E_\om(\mso A).
	\]
\end{lemma}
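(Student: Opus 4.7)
The plan is to express the pseudodistance $p$ applied to $E_\om(A^\om_\nn - B^\om_\nn)$ in terms of the singular values of $A^\om_\nn - B^\om_\nn$, then compare term by term with $p(A^\om_\nn - B^\om_\nn)$ to obtain both inequalities.

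First, by Corollary \autoref{ker_expansion} applied to $C^\om_\nn := A^\om_\nn - B^\om_\nn$, the matrix $E_\om(C^\om_\nn)$ is unitarily similar to $C^\om_\nn\oplus 0$ of size $\snnn$, so its singular values, arranged in nonincreasing order, are $\sigma_1(C^\om_\nn),\ldots,\sigma_{d_n^\om}(C^\om_\nn)$ followed by $N(\nn)-d_n^\om$ zeros. Writing out $p$ explicitly,
\[
p(E_\om(C^\om_\nn)) = \min\!\left\{\,\min_{i=1,\dots,d_n^\om}\!\left\{\frac{i-1}{N(\nn)}+\sigma_i(C^\om_\nn)\right\},\ \frac{d_n^\om}{N(\nn)}\right\},
\]
where the second argument comes from $i=d_n^\om+1$, the first index at which the singular value is $0$.

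For the upper bound $p(E_\om(C^\om_\nn))\le p(C^\om_\nn)$, I would simply observe that $\frac{i-1}{N(\nn)}\le\frac{i-1}{d_n^\om}$ term by term for $i=1,\dots,d_n^\om$, and that $\frac{d_n^\om}{N(\nn)}\le 1$ corresponds to (and is bounded by) the $i=d_n^\om+1$ term $1+\sigma_{d_n^\om+1}(C^\om_\nn)=1$ in the definition of $p(C^\om_\nn)$. Taking $\limsup_{n\to\infty}$ yields $\dacs{E_\om(\mso A)}{E_\om(\mso B)}\le \dacs{\mso A}{\mso B}$.

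For the lower bound, the main trick is to factor out $d_n^\om/N(\nn)$: since $\sigma_i(C^\om_\nn)\ge (d_n^\om/N(\nn))\sigma_i(C^\om_\nn)$, one has
\[
\frac{i-1}{N(\nn)}+\sigma_i(C^\om_\nn)\ \ge\ \frac{d_n^\om}{N(\nn)}\!\left(\frac{i-1}{d_n^\om}+\sigma_i(C^\om_\nn)\right),
\]
and similarly $\frac{d_n^\om}{N(\nn)}=\frac{d_n^\om}{N(\nn)}\cdot 1$ corresponds to the $i=d_n^\om+1$ term in $p(C^\om_\nn)$. Taking the minimum over all the relevant indices produces $p(E_\om(C^\om_\nn))\ge \frac{d_n^\om}{N(\nn)}\,p(C^\om_\nn)$. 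Now, by Corollary \autoref{dimension_chi}, $d_n^\om/N(\nn)\to\mu(\om)$, and since this is a convergent (hence bounded) positive factor, we can pull it out of the $\limsup$ to conclude
\[
\dacs{E_\om(\mso A)}{E_\om(\mso B)}\ \ge\ \mu(\om)\,\dacs{\mso A}{\mso B}.
\]

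For the ``in particular'' statement, the forward implication follows from the first inequality, while the converse uses the second inequality together with the standing assumption $\mu(\om)>0$, so that $\dacs{\{B^\om_{\nn,m}\}_n}{\mso A}\le \mu(\om)^{-1}\dacs{E_\om(\{B^\om_{\nn,m}\}_n)}{E_\om(\mso A)}\to 0$ as $m\to\infty$. The only mildly delicate point is handling the $\limsup$ of the product in the lower bound, but since $d_n^\om/N(\nn)$ actually converges, there is no real obstacle.
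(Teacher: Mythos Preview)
Your argument is correct. The upper bound is handled exactly as in the paper: use Corollary~\ref{ker_expansion} to identify the singular values of $E_\om(C^\om_\nn)$ with those of $C^\om_\nn$ padded by zeros, then compare $\frac{i-1}{N(\nn)}\le\frac{i-1}{d_n^\om}$ term by term.

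The lower bound is where you diverge. The paper does not argue directly: it simply observes that $R_\om(E_\om(\mso A))=\mso A$ and $R_\om(E_\om(\mso B))=\mso B$ by \autoref{MDREZ_relations}, and then applies the already-established \autoref{metric_reduction} to the pair $E_\om(\mso A),E_\om(\mso B)$, obtaining $\dacs{E_\om(\mso A)}{E_\om(\mso B)}\ge \mu(\om)\dacs{\mso A}{\mso B}$ in one line. Your route instead reproves this from scratch by factoring $d_n^\om/N(\nn)$ out of each term of $p(E_\om(C^\om_\nn))$ and invoking $d_n^\om/N(\nn)\to\mu(\om)$. Both are valid; the paper's version is shorter because it recycles the previous lemma, while yours is self-contained and makes the mechanism (the ratio $d_n^\om/N(\nn)$) more explicit. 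Your remark about pulling a convergent factor through a $\limsup$ is the only point requiring care, and it is fine since $p(C^\om_\nn)\le 1$ is bounded.
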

\begin{proof}
	Thanks to \autoref{MDREZ_relations}, we know that $R_\om(E_\om(\mso A)) =\mso A$, and the same happens to $\mso B$, so we can apply \autoref{metric_extension} and obtain
	\[
	\dacs{E_\om(\mso A)}{E_\om(\mso B)}\ge \mu(\om) \dacs{\mso A}{\mso B}.
	\]
	On the other hand, since $Z_\om(E_\om(\mso A-\mso B)) = E_\om(\mso A-\mso B)$, \autoref{ker_expansion} assures us that the singular values of $\mso A-\mso B$ are the same of the singular values of $E_\om(\mso A-\mso B)$ except $N(\nn)-d_n^\om$ for zeros. It means that
	\begin{align*}
	\dacs{E_\om(\mso A)}{E_\om(\mso B)} &= \limsup_{n\to \infty} \min_{i=1,\dots,N(\nn)+1}\left\{ \frac{i-1}{N(\nn)} + \sigma_i(E_\om(A^\om_\nn-B^\om_\nn)) \right\}\\
	&= \limsup_{n\to \infty} \min_{i=1,\dots,d_n^\om+1}\left\{ \frac{i-1}{N(\nn)} + \sigma_i(E_\om(A^\om_\nn-B^\om_\nn)) \right\}
	\\
	&= \limsup_{n\to \infty} \min_{i=1,\dots,d_n^\om+1}\left\{ \frac{i-1}{N(\nn)} + \sigma_i(A^\om_\nn-B^\om_\nn) \right\}
	\\
	&\le  \limsup_{n\to \infty} \min_{i=1,\dots,d_n^\om+1}\left\{ \frac{i-1}{d_n^\om} + \sigma_i(A^\om_\nn-B^\om_\nn) \right\}
	\\
	&= \dacs{\mso A}{\mso B}.
	\end{align*}
	Consequentially,
	\begin{align*}
	\{B^\om_{\nn,m}\}_n \acs\mso A &\iff  \dacs{\{B^\om_{\nn,m}\}_n}{\mso A }\to 0\\
	&\iff \dacs{E_\om(\{B^\om_{\nn,m}\}_n)}{E_\om(\mso A) }\to 0\\
	&\iff
	E_\om(\{B^\om_{\nn,m}\}_n) \acs E_\om(\mso A).
	\end{align*}
\end{proof}

%Notice that, given two sequences $\ms A$ and $\ms B$ with matrices of size $N(\nn)$, it is false that
%\[
%\dacs{\ms A}{\ms B} \ge \dacs{R_\om(\ms A)}{R_\om(\ms B)}.
%\]
%For example,
%\[
%\dacs{\{2I_{\nn}(\rchi_\om)  \}_n}{\ms 0} = \mu(\om)\le 1 =\dacs{\{2I_\nn^\om \}_n}{\ms 0} = \dacs{R_\om(\{2I_{\nn}(\rchi_\om)  \}_n)}{R_\om(\ms 0)}.
%\]

\subsection{Different Grids}

The operators $Z_\om,R_\om,E_\om$ are always referred to a measurable set $\om$ that tells us which rows and columns to add or remove from the matrices depending on the points of the regular grid $\Xi_n$ inside $\om$. Suppose now that we want to choose a slight different set of points for every $n$, and we ask whether the resulting sequence of matrices still enjoys a symbol. 
Remember that the symmetric difference $\triangle$ between two sets is the set of elements belonging to only one of the two sets. In symbols, $A\triangle B = (A\setminus B)\cup (B\setminus A)$. 

\begin{lemma}\label{Different_Grids}
	Let $\Gamma_n$ be a measurable set  in $[0,1]^d$  (not necessarily Peano-Jordan measurable) and let $\om$ be a  Peano-Jordan measurable set with positive measure  in $[0,1]^d$. Suppose that
	\[
	d_n^{\om\triangle\Gamma_n  }  = o(N(\nn)).
	\]
	Given a sequence $\{A_\nn\}_n$ with $A_\nn$ of size $\snnn$, and a measurable function $k$, we have that
	\[
	R_{\om}(\{A_\nn\}_n)\sim_\sigma k \iff \{R_{\Gamma_n}(A_\nn)\}_n\sim_\sigma k.
	\] 
	Moreover, if $A_\nn$ are Hermitian, then
	\[
	R_{\om}(\{A_\nn\}_n)\sim_\lambda k \iff \{R_{\Gamma_n}(A_\nn)\}_n\sim_\lambda k.
	\] 
\end{lemma}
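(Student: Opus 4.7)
The plan is to lift both sides to the ambient $N(\nn)\times N(\nn)$ space via the zero-padding operators $Z_\om$ and $Z_{\Gamma_n}$, and show that the lifted sequences differ only by a sequence of small rank. Writing $D_1 = I_{\nn}(\rchi_\om)$ and $D_2 = I_{\nn}(\rchi_{\Gamma_n})$, the diagonal matrix $D_1 - D_2 = I_{\nn}(\rchi_\om - \rchi_{\Gamma_n})$ has precisely $d_n^{\om\triangle\Gamma_n}$ nonzero entries (one per grid point of $\Xi_n$ lying in $\om\triangle\Gamma_n$). Expanding
\[
Z_\om(A_\nn) - Z_{\Gamma_n}(A_\nn) = (D_1 - D_2)A_\nn D_1 + D_2 A_\nn (D_1 - D_2)
\]
then yields $\rk(Z_\om(A_\nn) - Z_{\Gamma_n}(A_\nn)) \le 2\, d_n^{\om\triangle\Gamma_n} = o(N(\nn))$ by hypothesis. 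By \textbf{Z1} the difference is therefore zero-distributed; in the Hermitian case it is additionally a small-rank Hermitian perturbation, which is well known to preserve any $\sim_\lambda$ distribution.

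Next I would relate $R_\bullet(A_\nn)$ and $Z_\bullet(A_\nn)$ for each of $\bullet\in\{\om,\Gamma_n\}$. By \autoref{ker_expansion}, $Z_{\Gamma_n}(A_\nn)$ shares the nonzero singular values and eigenvalues of $R_{\Gamma_n}(A_\nn)$, with $N(\nn) - d_n^{\Gamma_n}$ additional zeros. Moreover $|d_n^\om - d_n^{\Gamma_n}|\le d_n^{\om\triangle\Gamma_n} = o(N(\nn))$, so \autoref{dimension_chi} gives $d_n^{\Gamma_n}/N(\nn) \to \mu(\om)$. Rerunning the empirical-distribution calculation from the proof of \autoref{general_extended_symbol}, but with the ratio $d_n^{\Gamma_n}/N(\nn)$ in place of $d_n^\om/N(\nn)$, yields the biconditional
\[
\{R_{\Gamma_n}(A_\nn)\}_n \sim_\sigma k \iff \{Z_{\Gamma_n}(A_\nn)\}_n \sim_\sigma k',
\]
where $k'$ denotes the extension of $k$ by zero to $[0,1]^d\times[-\pi,\pi]^d$, together with the analogous statement for $\sim_\lambda$ in the Hermitian case. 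Applying the identical computation with $\om$ in place of $\Gamma_n$ gives the corresponding biconditional for $R_\om$ and $Z_\om$. Chaining the two biconditionals through the zero-distributed (resp.\ Hermitian small-rank) perturbation from the previous step closes both the $\sim_\sigma$ and the $\sim_\lambda$ equivalences.

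The main obstacle is that \autoref{general_extended_symbol} is stated for a fixed Peano-Jordan measurable set, whereas here $\Gamma_n$ depends on $n$ and is only assumed measurable. What makes this manageable is that the proof of \autoref{general_extended_symbol} uses only the scalar limit $d_n^{\Gamma_n}/N(\nn) \to \mu(\om)$ and the spectral content of \autoref{ker_expansion}, both of which remain available for a varying measurable $\Gamma_n$ satisfying the hypothesis; no further regularity of $\Gamma_n$ is needed, and the whole argument reduces to a bookkeeping of empirical distributions together with the two-line rank estimate at the start.
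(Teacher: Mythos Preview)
Your proof is correct and takes a genuinely different route from the paper's. The paper works in an intermediate reduced space of size $d_n^{\om\cup\Gamma_n}$: it compares both $R_\om(A_\nn)$ and $R_{\Gamma_n}(A_\nn)$ to the common object $R_{\om\cup\Gamma_n}(Z_{\om\cap\Gamma_n}(A_\nn))$, showing that each differs from it by padding with $o(N(\nn))$ extra zero rows/columns plus a zero-distributed correction of rank at most $d_n^{\om\setminus\Gamma_n}$ (resp.\ $d_n^{\Gamma_n\setminus\om}$). Your approach instead lifts everything to the full ambient space $N(\nn)\times N(\nn)$, bounds $\rk(Z_\om(A_\nn)-Z_{\Gamma_n}(A_\nn))$ directly by $2\,d_n^{\om\triangle\Gamma_n}$, and then invokes the biconditional $R_\bullet\sim_\sigma k \iff Z_\bullet\sim_\sigma k'$ obtained by rerunning the empirical-distribution computation of \autoref{general_reduced_symbol}/\autoref{general_extended_symbol} with the ratio $d_n^{\Gamma_n}/N(\nn)\to\mu(\om)$. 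Your version is arguably cleaner, since it avoids the extra $\om\cup\Gamma_n$ layer and reduces the whole lemma to a two-line rank estimate plus a reuse of existing calculations; the paper's version has the minor stylistic advantage of never leaving the reduced setting. Both rely on exactly the same two ingredients you identified: the rank bound coming from $d_n^{\om\triangle\Gamma_n}=o(N(\nn))$, and the fact that the operators $R_{\Gamma_n},Z_{\Gamma_n},\Pi_{\nn,\Gamma_n}$ and the spectral content of \autoref{ker_reduction} are available for an arbitrary measurable (not necessarily Peano--Jordan) set, as noted in the Remark following the definitions.
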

\begin{proof}
	Consider 
	%$B_\nn = R_{\om\cup \Gamma_n}(Z_{\om \cap \Gamma_n} (A_\nn))$. T
	the difference \[
	R_{\om\cup \Gamma_n}(Z_{\om \cap \Gamma_n} (A_\nn)) - R_{\om\cup \Gamma_n}(E_{\om} (R_{\om}(A_\nn))) = 
	R_{\om\cup \Gamma_n}(Z_{\om \cap \Gamma_n} (A_\nn)-Z_{\om}(A_\nn))
	.
	\]
	The matrix has at most $d_n^{\om\setminus \Gamma_n}\le d_n^{\om\triangle\Gamma_n  }  = o(N(\nn))$ non-zero rows and columns, and from  \autoref{dimension_chi}, we infer also that $d_n^{\om\setminus \Gamma_n} = o(d_n^\om)$. Consequently, $d_n^{\om\setminus \Gamma_n} = o(d_n^{\om\cup\Gamma_n})$, so the sequence is zero-distributed. Moreover, the matrix $B^{\om\cup \Gamma_n}_\nn := R_{\om\cup \Gamma_n}(E_{\om} (R_{\om}(A_\nn)))$ is actually $R_{\om}(A_\nn)$ with additional $d_n^{\Gamma_n\setminus \om}\le d_n^{\om\triangle\Gamma_n  }  = o(N(\nn))$ zero columns and rows, so we just added few zero singular values, for which holds again $d_n^{ \Gamma_n\setminus\om} = o(d_n^{\om\cup\Gamma_n})$. In particular,
	if we consider any continuous function $F:\f C\to \f C$ with compact support, then
	\begin{align*}
	\frac{1}{d_n^{\Gamma_n\cup \om}} \sum_{i=1}^{d_n^{\Gamma_n\cup \om}} F(\sigma_i(B^{\om\cup \Gamma_n}_\nn))	&=
	\frac{d_n^{\Gamma_n\setminus \om}}{d_n^{\Gamma_n\cup \om}}  F(0)   +  \frac{d_n^{\Gamma_n\cup \om} - d_n^{\Gamma_n\setminus \om}}{d_n^{\Gamma_n\cup \om}} \frac{1}{d_n^\om}\sum_{i=1}^{d_n^{\om}} F(\sigma_i(R_{\om}(A_\nn)))
	\end{align*}
	and asymptotically we have
	\begin{align*}
	\lim_{n\to \infty}\frac{1}{d_n^{\Gamma_n\cup \om}} \sum_{i=1}^{d_n^{\Gamma_n\cup \om}} F(\sigma_i(B^{\om\cup \Gamma_n}_\nn))	&=
	\lim_{n\to \infty} \frac{1}{d_n^\om}\sum_{i=1}^{d_n^{\om}} F(\sigma_i(R_{\om}(A_\nn))).
	\end{align*}
	It leads to 
	\[
	R_{\om}(\{A_\nn\}_n)\sim_\sigma k \iff
	\{R_{\om\cup \Gamma_n}(E_{\om} (R_{\om}(A_\nn)))\}_n\sim_\sigma k \iff
	\{R_{\om\cup \Gamma_n}(Z_{\om \cap \Gamma_n} (A_\nn))\}_n \sim_\sigma k
	\]
	and the same argument can be applied to $R_{\Gamma_n}(A_\nn)$, so we can conclude that 
	\[
	R_{\om}(\{A_\nn\}_n)\sim_\sigma k \iff \{R_{\Gamma_n}(A_\nn)\}_n\sim_\sigma k.
	\] 
	If $A_\nn$ are hermitian, then all the matrices considered until now are also Hermitian, so the same results apply to the spectral symbols and 
	\[
	R_{\om}(\{A_\nn\}_n)\sim_\lambda k \iff \{R_{\Gamma_n}(A_\nn)\}_n\sim_\lambda k.
	\] 
\end{proof}

This result is quite powerful since it tells us that we can add and remove a small number of rows and columns without changing the symbol of the sequence. It will be useful in applications when dealing with near-boundary conditions.

\section{Reduced GLT}\label{RedGLT}

In the following propositions, we denote the image of $R_\om$ when applied to GLT sequences as $\mc G^\om_d := R_\om(\mc G_d)$, and we call it the space of \textit{Reduced GLT} with respect to $\om$.
\subsection{Reduced GLT Symbol}

\begin{lemma}\label{reduced_symbol}
	Given a GLT sequence $\{A_\nn\}_n\GLT k(x,\theta)$ with $k:[0,1]^d\times[-\pi,\pi]^d\to \f C$, then 
	\[
	R_\om(\{A_\nn\}_n)\sim_\sigma k(x,\theta)|_{x\in \om}.
	\] 
	If $A_\nn$ are also Hermitian matrices, then
	\[
	R_\om(\{A_\nn\}_n)\sim_\lambda k(x,\theta)|_{x\in \om}.
	\] 
\end{lemma}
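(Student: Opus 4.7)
The plan is to reduce the claim to \autoref{general_reduced_symbol}, which already handles sequences that are fixed points of $Z_\om$ whose $\sigma$- or $\lambda$-symbol vanishes outside $\om$. The idea is that although $\{A_\nn\}_n$ itself need not satisfy $Z_\om(A_\nn)=A_\nn$, we can replace it by $\{Z_\om(A_\nn)\}_n$ without altering the reduced sequence. Indeed, by \autoref{MDREZ_relations} we have $R_\om(A_\nn)=R_\om(Z_\om(A_\nn))$ for every $\nn$, so the sequence to which we ultimately apply the reduction operator is exactly $\{Z_\om(A_\nn)\}_n$.

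First I would verify that $\{Z_\om(A_\nn)\}_n$ fits the hypotheses of \autoref{general_reduced_symbol}. By the idempotency of $Z_\om$ listed in the previous subsection, $Z_\om(Z_\om(A_\nn))=Z_\om(A_\nn)$, so this sequence is a fixed point of $Z_\om$. Next, using \autoref{preliminaries_equivalence_D_nn_chi} (so that $\{I_\nn(\rchi_\om)\}_n\GLT \rchi_\om(x)$) together with \textbf{GLT4}, we get
\[
\{Z_\om(A_\nn)\}_n=\{I_\nn(\rchi_\om)\,A_\nn\,I_\nn(\rchi_\om)\}_n\GLT k(x,\theta)\,\rchi_\om(x),
\]
and in particular by \textbf{GLT1}, $\{Z_\om(A_\nn)\}_n\sim_\sigma k(x,\theta)\rchi_\om(x)$. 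Since the function $k(x,\theta)\rchi_\om(x)$ vanishes for $x\notin\om$, \autoref{general_reduced_symbol} applies and yields
\[
R_\om(\{A_\nn\}_n)=R_\om(\{Z_\om(A_\nn)\}_n)\sim_\sigma k(x,\theta)\rchi_\om(x)\big|_{x\in\om}=k(x,\theta)\big|_{x\in\om}.
\]

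For the Hermitian case I would note that if each $A_\nn$ is Hermitian then each $Z_\om(A_\nn)=I_\nn(\rchi_\om) A_\nn I_\nn(\rchi_\om)$ is Hermitian as well, since $I_\nn(\rchi_\om)$ is a real diagonal matrix. Hermitian matrices are normal, so \textbf{GLT1} upgrades the previous conclusion to $\{Z_\om(A_\nn)\}_n\sim_\lambda k(x,\theta)\rchi_\om(x)$. Applying the eigenvalue half of \autoref{general_reduced_symbol} to this fixed point of $Z_\om$ gives $R_\om(\{A_\nn\}_n)\sim_\lambda k(x,\theta)|_{x\in\om}$, which completes the argument.

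I do not foresee a serious obstacle: the whole point of the preceding subsections was to stockpile precisely the ingredients used here (idempotency of $Z_\om$, the identity $R_\om=R_\om\circ Z_\om$, the GLT-algebra stability of the multiplication by $I_\nn(\rchi_\om)$, and the transfer of symbols from \autoref{general_reduced_symbol}). The only mildly delicate point is being careful that the GLT symbol of $\{Z_\om(A_\nn)\}_n$ is a genuine $\sigma$- (respectively $\lambda$-) symbol before invoking \autoref{general_reduced_symbol}, which is where \textbf{GLT1} and the normality of $Z_\om(A_\nn)$ in the Hermitian case are essential.
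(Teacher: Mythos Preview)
Your proposal is correct and follows essentially the same route as the paper: replace $\{A_\nn\}_n$ by $\{Z_\om(A_\nn)\}_n$ via the identity $R_\om=R_\om\circ Z_\om$ from \autoref{MDREZ_relations}, use idempotency of $Z_\om$ and \textbf{GLT1} (together with Hermitianity preservation in the second part) to feed $\{Z_\om(A_\nn)\}_n$ into \autoref{general_reduced_symbol}. The only cosmetic slip is the citation: the fact $\{I_\nn(\rchi_\om)\}_n\GLT\rchi_\om$ comes from \autoref{preliminaries_equivalence_D_nn_a} (applied to the a.e.\ continuous function $\rchi_\om$), not from \autoref{preliminaries_equivalence_D_nn_chi}.
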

\begin{proof}
	Thanks to \autoref{MDREZ_relations}, we have 
	$
	R_\om(\ms A) = R_\om(Z_\om(\ms A))
	$
	and if we call $\ms B =Z_\om(\ms A)$, then $Z_\om(\ms B)= \ms B$ since $Z_\om$ is an idempotent operator. Moreover,  $\ms B\GLT k(x,\theta)\rchi_\om(x)$, so in particular $\ms B\sim_\sigma k(x,\theta)\rchi_\om(x)$ due to \textbf{GLT1}. We can thus use \autoref{general_reduced_symbol} and obtain that 
	\[
	R_\om(\ms A) =R_\om(\ms B)\sim_\sigma  k(x,\theta)\rchi_\om(x)|_{x\in\om}=k(x,\theta)|_{x\in\om}. 
	\]
	If $A_\nn$ are Hermitian matrices, then also $\ms B=Z_\om(\ms A)$ is a Hermitian sequence, since $Z_\om$ preserves the Hermitianity, so $\ms B\GLT k(x,\theta)\rchi_\om(x)\implies \ms B\sim_\lambda k(x,\theta)\rchi_\om(x)$ due to \textbf{GLT1}. As before, $Z_\om(\ms B)= \ms B$ and \autoref{general_reduced_symbol} assure us that 
	\[
	R_\om(\ms A) =R_\om(\ms B)\sim_\lambda  k(x,\theta)\rchi_\om(x)|_{x\in\om}=k(x,\theta)|_{x\in\om}. 
	\]
\end{proof}

Notice that the map $R_\om$ is not injective, but one can prove that all the GLT sequences with the same image have  symbols that coincide on $\om\times [-\pi,\pi]$.

\begin{lemma}\label{uniqueness}
	Given $\ms A\GLT k$, $\ms B\GLT h$ such that $R_\om(\ms A)=R_\om(\ms B)= \ms {S^\om}\in \mc G^\om_d$, the symbols $k,h$ coincide on $\om\times [-\pi,\pi]^d$. 
\end{lemma}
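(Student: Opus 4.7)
The plan is to reduce the uniqueness statement to uniqueness of singular value symbols, by exploiting the linearity of $R_\om$ and the algebraic closure properties of the GLT class.

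First, I would form the difference sequence $\{A_\nn-B_\nn\}_n$. By the algebra property \textbf{GLT 4}, this is again a GLT sequence and its GLT symbol is $k-h$. In particular, by \textbf{GLT 1} it admits $k-h$ as a singular value symbol on $[0,1]^d\times[-\pi,\pi]^d$. Second, I would use that $R_\om$ is linear on matrix-sequences, since $R_\om(A_\nn)=\Pi_{\nn,\om}A_\nn(\Pi_{\nn,\om})^T$. This gives
\[
R_\om(\{A_\nn-B_\nn\}_n) \;=\; R_\om(\{A_\nn\}_n)-R_\om(\{B_\nn\}_n) \;=\; \{0\}_n,
\]
which is obviously zero-distributed.

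Third, I would apply the already-established \autoref{reduced_symbol} to the GLT sequence $\{A_\nn-B_\nn\}_n\GLT k-h$: this yields
\[
R_\om(\{A_\nn-B_\nn\}_n) \;\sim_\sigma\; (k-h)\bigl|_{x\in\om}.
\]
Combining the two previous steps, the same matrix-sequence (namely the zero sequence on the reduced side) admits both $(k-h)|_{x\in\om}$ and $0$ as singular value symbols.

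The conclusion then follows from the uniqueness of the singular value symbol up to modulus: two singular value symbols of the same matrix-sequence must coincide in absolute value almost everywhere, so $|k-h|=0$ a.e.\ on $\om\times[-\pi,\pi]^d$, i.e.\ $k=h$ a.e.\ on $\om\times[-\pi,\pi]^d$. The only delicate point is this last uniqueness fact, but it is a standard consequence of the defining relation for $\sim_\sigma$ together with the density of continuous compactly supported test functions, and is implicit in the foundational material recalled in Section~\ref{Review}; everything else is a direct combination of \textbf{GLT 4}, linearity of $R_\om$, and \autoref{reduced_symbol}.
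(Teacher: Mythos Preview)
Your proposal is correct and follows essentially the same approach as the paper: both form the difference, use \textbf{GLT 4} and the linearity of $R_\om$, apply \autoref{reduced_symbol} to obtain $\{0\}_n\sim_\sigma(k-h)|_{x\in\om}$, and then conclude $k=h$ a.e.\ on $\om\times[-\pi,\pi]^d$. The only cosmetic difference is that the paper spells out the final step explicitly with a test function argument rather than invoking uniqueness of singular value symbols as a black box.
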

\begin{proof}
	Since $R_\om$ is linear, we can use \autoref{reduced_symbol} and \textbf{GLT4} and obtain
	\[
	\ms A -\ms B\GLT k-h \implies 
	R_\om(\ms A -\ms B) = \ms {S^\om}-\ms {S^\om} =\mso 0
	\sim_\sigma (k-h)|_{x\in \om}=\kappa.
	%\implies k|_{x\in \om}\equiv h|_{x\in \om}.
	\]
	Notice that if the set where $0<L<|\kappa|<M$ has non-zero measure, then we can consider a nonnegative continuous function $F:\f R\to \f C$ with compact support such that $F(0)=0$ and $F(x)>\delta>0$ for every $x\in (L,M)$ to get an absurd
	\[
	0=\lim_{n\to \infty}\frac{1}{d_n^\om}F(\sigma_i(0_\nn)) = \frac{1}{\mu(\om)(2\pi)^d}
	\int_{\om\times[-\pi,\pi]^d} F(|\kappa(x,\theta)|) d(x,\theta) \ge \frac{1}{\mu(\om)(2\pi)^d}\delta \mu\{|\kappa|>0\}>0.
	\]
	We conclude that $\kappa=0$, and so $k,h$ coincide on $\om\times [-\pi,\pi]^d$.
\end{proof}
As a corollary,  every GLT sequence mapped into $\ms {S^\om}$ possesses a symbol with a fixed value on $\om\times [-\pi,\pi]^d$, so
we can associate to each reduced GLT sequence $\ms {S^\om}$ an unique symbol, called \textit{Reduced GLT Symbol}, obtained as the restriction of any GLT symbol of the sequences in the counter-image $R_\om^{-1}(\ms {S^\om})\cap \mc G_d$. From now on, we will use the notation $\ms {S^\om}\GLT^\om s$ to indicate that $s:\om\times[-\pi,\pi]^d\to \f C$ is the restriction of a symbol $k:[0,1]^d\times [-\pi,\pi]^d\to \f C$ such that $\ms A\GLT k$ and $R_\om(\ms A)=\ms {S^\om}$. \\

Given any reduced GLT sequence $\ms {S^\om}$, it is easy to produce a GLT sequence $\ms A$ such that $R_\om(\ms A)=\ms {S^\om}$ using the operator $E_\om$.  We can thus  reverse \autoref{reduced_symbol}. 
\begin{lemma}\label{symbol_extension}
	If  $\ms {S^\om}\GLTom \kappa$, then
	$E_\om(\ms {S^\om})\GLT k(x,\theta)=
	\begin{cases}
	\kappa(x,\theta) & x\in \om,\\
	0 & x\not\in \om.
	\end{cases}$ and $R_\om(E_\om(\ms {S^\om}))=\ms {S^\om}$.
\end{lemma}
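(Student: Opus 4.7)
The plan is to unfold the definition of the Reduced GLT symbol and then reduce everything to identities already established, without any new asymptotic estimate. By definition of $\GLTom$, the hypothesis $\ms{S^\om}\GLTom \kappa$ means there exists a classical GLT sequence $\ms A\GLT k'$ with $k':[0,1]^d\times[-\pi,\pi]^d\to\f C$ satisfying $R_\om(\ms A)=\ms{S^\om}$ and $k'|_{\om\times[-\pi,\pi]^d}=\kappa$. So the whole task is to show that $E_\om(\ms{S^\om})$ is the same as a GLT sequence whose symbol is already known.

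First I would tackle the easy identity $R_\om(E_\om(\ms{S^\om}))=\ms{S^\om}$: this is precisely one of the bullet points of \autoref{MDREZ_relations} applied entry-wise to the sequence $\ms{S^\om}$, so nothing needs to be proved. Next I would compute $E_\om(\ms{S^\om})$ in terms of $\ms A$. Using $\ms{S^\om}=R_\om(\ms A)$ and again the relations from \autoref{MDREZ_relations}, namely $E_\om(R_\om(A_\nn))=Z_\om(A_\nn)$, we get
\[
E_\om(\ms{S^\om}) = E_\om(R_\om(\ms A)) = Z_\om(\ms A).
\]

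Now I would invoke the bulleted properties of $Z_\om$ from the beginning of Section \ref{Res_Exp}: since $Z_\om(A_\nn)=I_\nn(\rchi_\om)A_\nn I_\nn(\rchi_\om)$ is the product of the three GLT sequences $\{I_\nn(\rchi_\om)\}_n\GLT\rchi_\om(x)$ (by \autoref{preliminaries_equivalence_D_nn_a} applied to the a.e.\ continuous function $\rchi_\om$, since $\om$ is Peano--Jordan measurable) and $\ms A\GLT k'(x,\theta)$, the product rule \textbf{GLT 4} yields
\[
Z_\om(\ms A) \GLT \rchi_\om(x)\, k'(x,\theta)\, \rchi_\om(x) = k'(x,\theta)\rchi_\om(x).
\]
The right-hand side equals $\kappa(x,\theta)$ for $x\in\om$ and $0$ for $x\notin\om$, which is exactly the function $k$ in the statement. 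Combining the two identities gives $E_\om(\ms{S^\om})\GLT k$, as required.

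The argument is essentially bookkeeping: the only mild subtlety is making sure the symbol assignment is well defined, i.e.\ that the resulting $k$ does not depend on the particular choice of the preimage $\ms A\in R_\om^{-1}(\ms{S^\om})\cap \mc G_d$. This, however, has already been handled by \autoref{uniqueness}, which guarantees that any two such preimages have GLT symbols coinciding on $\om\times[-\pi,\pi]^d$ and therefore produce the same zero-extension $k$ on $[0,1]^d\times[-\pi,\pi]^d$. Hence no real obstacle arises; the lemma is a clean consequence of the algebraic relations in \autoref{MDREZ_relations} together with \textbf{GLT 4}.
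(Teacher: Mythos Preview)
Your proof is correct and follows essentially the same route as the paper: pick a GLT preimage $\ms A$ of $\ms{S^\om}$ under $R_\om$, use the identity $E_\om\circ R_\om = Z_\om$ from \autoref{MDREZ_relations} to rewrite $E_\om(\ms{S^\om})$ as $Z_\om(\ms A)$, and read off the GLT symbol $k'\rchi_\om$ via \textbf{GLT\,4}. Your added remark on well-definedness via \autoref{uniqueness} is a harmless clarification but not strictly needed, since the extension $k$ is determined by $\kappa$ alone regardless of the preimage chosen.
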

\begin{proof}
	Since $\ms {S^\om}\in \mc G_d^\om = R_\om(\mc G_d)$, there exists a GLT  sequence $\ms A\GLT h$ such that $R_\om(\ms A) = \ms {S^\om}$, but thanks to \autoref{MDREZ_relations} we know that also $R_\om(Z_\om(\ms A)) =\ms {S^\om}$ and
	\[
	Z_\om(\ms A)\GLT h(x,\theta)\rchi_\om (x) =k(x,\theta)=
	\begin{cases}
	\kappa(x,\theta) & x\in \om,\\
	0 & x\not\in \om.
	\end{cases}
	\]
	Using again \autoref{MDREZ_relations}, we can conclude, since
	\[
	Z_\om(\ms A) = E_\om(R_\om(\ms A)) = E_\om(\ms {S^\om}).
	\]
\end{proof}

\subsection{Axioms of Reduced GLT}
Using the connection between $\mc G_d$ and $\mc G_d^\om$, we can prove that many properties of the first space transfer to the second. 
\begin{theorem}
	Suppose $\mso A$, $\mso B$ are reduced GLT sequences and $\mso X,\mso Y$ are sequences with $X^\om_\nn,Y^\om_\nn\in\f C^{d_n^\om\times d_n^\om}$. 
	\begin{enumerate} [leftmargin=39pt]
		\item[\textbf{GLT\,1.}] If $\mso A\GLTom\kappa$ then $\mso A\sim_{\sigma}\kappa$. If $\mso A\GLTom\kappa$ and each $A^\om_\nn$ is Hermitian then $\mso A\sim_{\lambda}\kappa$.
		\item[\textbf{GLT\,2.}] If $\mso A\GLTom\kappa$ and $\mso A=\mso X+\mso Y$, where
		\begin{itemize}
			\item every $X^\om_\nn$ is Hermitian,
			\item $(d_n^\om)^{-1}\|Y^\om_\nn\|_2^2\to 0$,
		\end{itemize}
		then $\mso A\sim_{\lambda}\kappa$.
		\item[\textbf{GLT\,3.}] Here we list three important examples of reduced GLT sequences. 
		\begin{itemize}
			\item Given a function $f$ in $L^1([-\pi,\pi]^d)$, its associated reduced Toeplitz sequence is $\{
			T^\om_\nn(f)\}_n= R_\om (\{T_\nn(f)\}_n)$, where the elements are multidimensional Fourier coefficients of $f$:
			\[ T_\nn( f ) = [ f_{\ii-\jj} ]^\nn_{\ii, \jj={\bm 1}}, \qquad f_{\bm k} = \frac{1}{(2\pi)^d} \int_{-\pi}^{\pi} f(\theta) e^{-\textnormal i \bm k\cdot\theta} d\theta. \]
			$\{
			T^\om_\nn(f)\}_n$ is a reduced GLT sequence with symbol $\kappa( x,\theta)=f(\theta)$.
			\item Given an almost everywhere continuous function, $\wt a:[0,1]^d\to\mathbb C$ and its restriction $a=\wt a|_{\om}$, its associated diagonal sampling sequence $\{D_\nn^\om(a)\}_n$ is defined as
			\[ D_\nn^\om(a) = \textnormal{diag}\left(\left\{a\left(\frac {\phi(i)}{\nn+\uu}\right) \right\}_{i=1}^{d_n^\om}\right). \]
			$\{D_\nn^\om(a)\}_n$  is a reduced GLT sequence with symbol $\kappa( x,\theta)=a( x)$.
			\item Any zero-distributed sequence $\{Y^\om_\nn\}_n\sim_\sigma0$ is a reduced GLT sequence with symbol $\kappa( x,\theta)=0$.
		\end{itemize}
		\item[\textbf{GLT\,4.}] If $\mso A\GLTom\kappa$ and $\mso B\GLTom\xi$, then
		\begin{itemize}
			\item $\{ (A_\nn^\om)^*\}_n\GLTom\overline\kappa$, where $(A^\om_\nn)^*$ is the conjugate transpose of $A^\om_\nn$,
			\item $\{\alpha A^\om_\nn+\beta B^\om_\nn\}_n\GLTom\alpha\kappa+\beta\xi$ for all $\alpha,\beta\in\mathbb C$,
			\item $\{A^\om_\nn B^\om_\nn\}_n\GLTom\kappa\xi$.
		\end{itemize}
		\item[\textbf{GLT\,5.}] If $\mso A\GLTom\kappa$ and $\kappa\ne0$ a.e., then $\{(A_\nn^\om)^\dag \}_n\GLTom\kappa^{-1}$, where $(A^\om_\nn)^\dag$ is the Moore--Penrose pseudoinverse of $A_\nn^\om$. 
		\item[\textbf{GLT\,6.}] If $\mso A\GLTom\kappa$ and each $A_\nn^\om$ is Hermitian, then $\{f(A^\om_\nn)\}_n\GLTom f(\kappa)$ for all continuous functions $f:\mathbb C\to\mathbb C$.
		\item[\textbf{GLT\,7.}] $\mso A\GLTom\kappa$ if and only if there exist GLT sequences $\{B_{\nn,m}\}_n\GLTom\kappa_m$ such that $\kappa_m$ converges to $\kappa$ in measure and $\{B_{\nn,m}\}_n\xrightarrow{\textnormal{a.c.s.}}\mso A$ as $m\to\infty$.
		\item[\textbf{GLT\,8.}] Suppose $\mso A\GLTom\kappa$ and $\{B^\om_{\nn,m}\}_n\GLTom\kappa_m$, where both $A^\om_\nn$ and $B^\om_{\nn,m}$ have the same size $\sdno$. Then, $\{B^\om_{\nn,m}\}_n\xrightarrow{\textnormal{a.c.s.}}\mso A$ as $m\to\infty$ if and only if $\kappa_m$ converges to $\kappa$ in measure.
		\item[\textbf{GLT\,9.}]  If $\mso A\GLTom\kappa$ %and $d_n$ denotes the size of $A_n$, 
		then there exist functions  $a_{i,m},f_{i,m},\ i=1,\ldots,N_m$, such~that
		\begin{itemize}
			\item $a_{i,m}\in C^\infty(\om)$ and $f_{i,m}$ is a trigonometric polynomial,
			\item $\sum_{i=1}^{N_m}a_{i,m}(x)f_{i,m}(\theta)$ converges to $\kappa(x,\theta)$ a.e.,
			\item  $\bigl\{\sum_{i=1}^{N_m}D_\nn^\om(a_{i,m})T_{\nn}^\om(f_{i,m})\bigr\}_n\xrightarrow{\rm a.c.s.}\mso A$ as $m\to\infty$. 
		\end{itemize}
	\end{enumerate}
\end{theorem}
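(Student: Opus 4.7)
The overall strategy is to derive each reduced GLT axiom from its classical counterpart by lifting $\mso A$ to a preimage $\ms{A'}\in R_\om^{-1}(\mso A)\cap\mc G_d$, applying the classical axiom to $\ms{A'}$, and reducing back via $R_\om$. The canonical lift will be $\ms{A'}=E_\om(\mso A)$, which by \autoref{symbol_extension} has classical GLT symbol $\kappa\rchi_\om$, and by \autoref{MDREZ_relations} inherits Hermitianity from $\mso A$. The two structural facts that power the transfer are \autoref{reduced_symbol} (reduction of a classical GLT sequence yields a reduced GLT sequence whose symbol is the restriction to $\om\times[-\pi,\pi]^d$) and the two-sided comparability between $d_{\rm a.c.s.}$ in $\mc G_d^\om$ and in $\mc G_d$ supplied by \autoref{metric_reduction} and \autoref{metric_extension}, which transports a.c.s.\ convergence through both $E_\om$ and $R_\om$.

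For the easy axioms GLT 1, 3, 4, 7, 8, little more than bookkeeping is needed. GLT 1 in the singular-value case is \autoref{reduced_symbol}, and the Hermitian eigenvalue case follows by applying classical GLT 1 to the Hermitian lift $E_\om(\mso A)$ and restricting. GLT 3 is immediate for reduced Toeplitz by construction, holds for the diagonal sampling sequence since $\{D^\om_\nn(a)\}_n=R_\om(\{I_\nn(\tilde a)\}_n)$ combined with \autoref{preliminaries_equivalence_D_nn_a}, and the zero-distributed case is \autoref{zero_distributed_RE}. GLT 4 handles sums and adjoints by linearity of $R_\om$ together with the adjoint identity in \autoref{MDREZ_relations}; for products one writes $A^\om_\nn B^\om_\nn = R_\om(A'_\nn I_\nn(\rchi_\om) B'_\nn)$, applies classical GLT 4 noting $\{I_\nn(\rchi_\om)\}_n\GLT\rchi_\om$, and restricts the product symbol $\tilde\kappa\rchi_\om\tilde\xi$ to $\om$ to recover $\kappa\xi$. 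GLT 7 and GLT 8 follow by applying the classical versions to $E_\om(\mso A)$ and $E_\om(\{B^\om_{\nn,m}\}_n)$, and transporting the a.c.s.\ convergence in both directions via the metric equivalences.

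GLT 2, 5 and 6 require small but essential adjustments. For GLT 2 the splitting $\mso A=\mso X+\mso Y$ lifts to $E_\om(\mso X)+E_\om(\mso Y)$; since $\|E_\om(Y^\om_\nn)\|_2 = \|Y^\om_\nn\|_2$ and $d_n^\om / N(\nn)\to\mu(\om)>0$ by \autoref{dimension_chi}, the Frobenius hypothesis transfers verbatim and classical GLT 2 applies. In GLT 5 the canonical symbol $\kappa\rchi_\om$ vanishes on $[0,1]^d\setminus\om$, so classical GLT 5 is inapplicable to the canonical lift; instead I would use $\ms A' := E_\om(\mso A) + I_\nn(\rchi_{\om^C})$, whose symbol $\kappa\rchi_\om + \rchi_{\om^C}$ is nonzero a.e., and whose block structure (blocks $A^\om_\nn$ and the identity, after permutation, by \autoref{ker_expansion}) makes the pseudoinverse split blockwise; classical GLT 5 followed by $R_\om$ then yields the claim. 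GLT 6 uses the same device with $I_\nn(\rchi_{\om^C})$ replaced by $c\,I_\nn(\rchi_{\om^C})$ for a real constant $c$, so that $f(A'_\nn)$ is block diagonal with blocks $f(A^\om_\nn)$ and $f(c)I$, after which classical GLT 6 applies.

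The main technical hurdle is GLT 9. Classical GLT 9 applied to $E_\om(\mso A)\GLT\kappa\rchi_\om$ supplies functions $\tilde a_{i,m}\in C^\infty([0,1]^d)$ and trigonometric polynomials $f_{i,m}$ with $\sum_i \tilde a_{i,m}(x) f_{i,m}(\theta)\to \kappa\rchi_\om$ a.e.\ and $\bigl\{\sum_i D_\nn(\tilde a_{i,m}) T_\nn(f_{i,m})\bigr\}_n \acs E_\om(\mso A)$. Reducing by $R_\om$ and using \autoref{metric_reduction} gives a.c.s.\ convergence of $\sum_i R_\om(D_\nn(\tilde a_{i,m}) T_\nn(f_{i,m}))$ to $\mso A$. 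The delicate step is to identify each summand with $D^\om_\nn(a_{i,m}) T^\om_\nn(f_{i,m})$ where $a_{i,m}=\tilde a_{i,m}|_\om$: since $D_\nn(\tilde a)$ and $I_\nn(\rchi_\om)$ commute, one rewrites $R_\om(D_\nn(\tilde a) T_\nn(f))$ as $D^\om_\nn(\tilde a|_\om)\,T^\om_\nn(f)$ up to a discrepancy arising from the mismatch between the grids $\ii/\nn$ and $\ii/(\nn+\uu)$; this discrepancy must be shown zero-distributed via \autoref{preliminaries_equivalence_D_nn_a} and \autoref{preliminaries_equivalence_D_nn_chi} and then absorbed a.c.s.-negligibly into the remainder. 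Controlling this error uniformly in $m$ so that the double a.c.s.\ limit passes cleanly is where I expect the bulk of the technical work.
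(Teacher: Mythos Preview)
Your overall strategy and the proofs of GLT\,1--8 match the paper's almost exactly (the paper's GLT\,6 is marginally simpler: it applies $f$ directly to $E_\om(\mso A)$, getting blocks $f(A^\om_\nn)$ and $f(0)I$, without introducing the auxiliary constant $c$).

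The real difference is GLT\,9, where you overestimate the difficulty. The paper does \emph{not} transport the a.c.s.\ convergence from classical GLT\,9 through $R_\om$ and then try to identify $R_\om(D_\nn(\tilde a_{i,m})T_\nn(f_{i,m}))$ with $D^\om_\nn(a_{i,m})T^\om_\nn(f_{i,m})$. Instead it takes from classical GLT\,9 only the a.e.\ convergence $\sum_i \tilde a_{i,m}f_{i,m}\to\kappa'$, restricts to $\om$ to get $\sum_i a_{i,m}f_{i,m}\to\kappa$ a.e., builds the reduced sequences $\{\sum_i D^\om_\nn(a_{i,m})T^\om_\nn(f_{i,m})\}_n$ directly via the already-proved reduced GLT\,3 and GLT\,4 (so their reduced symbols are exactly $\sum_i a_{i,m}f_{i,m}$), and then invokes the already-proved reduced GLT\,8 to conclude a.c.s.\ convergence to $\mso A$. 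This sidesteps entirely the grid-mismatch issue you flag. Incidentally, even along your route the worry about ``uniformity in $m$'' is unfounded: for each fixed $m$ the discrepancy is a finite sum of zero-distributed sequences, hence has $d_{\rm a.c.s.}$-distance zero from the target, and that is all that is needed to pass to the a.c.s.\ limit in $m$.
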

\begin{proof}
	Given $\mso A\GLTom \kappa$, $\mso B\GLTom \xi$,
	call $\ms {A}=E_\om(\mso A)\GLT \kappa'$ and $\ms {B}=E_\om(\mso B)\GLT \xi'$, where $\kappa'$ and $\xi'$ are the extension of $\kappa$ and $\xi$ as specified in \autoref{symbol_extension}. We know that $\kappa'|_{x\in \om} =\kappa$, $\xi'|_{x\in \om} =\xi$ and $R_\om(\ms{A})=\mso A$, $R_\om(\ms{B})=\mso B$.
	Notice that in every proof we use the axioms \textbf{GLT1-9} referred to the regular multilevel GLT.
	\begin{enumerate} [leftmargin=39pt]
		\item[\textbf{GLT\,1.}] 
		Using \autoref{reduced_symbol}, we know that $\mso A\sim_{\sigma}\kappa'|_{x\in \om} =\kappa$. If  $\mso A$ is Hermitian, then $\ms {A}$ is Hermitian by \autoref{MDREZ_relations}, so \autoref{reduced_symbol} let us conclude that $\mso A\sim_{\lambda}\kappa'|_{x\in \om} =\kappa$.
		\item[\textbf{GLT\,2.}] Let $\ms {X}=E_\om(\mso X)$ and $\ms {Y}=E_\om(\mso Y)$. The operator $E_\om$ is linear, so $\ms{ A} = \ms{X}+\ms {Y}$, where $\ms {A}\GLT \kappa'$. Using 
		Corollary \autoref{ker_expansion}, we know that the singular values of $Y_\nn$ are the same of $Y^\om_\nn$ except for zero singular values. As a consequence,
		\[ 
		\lim_{n\to+\infty}(N(\nn))^{-1}\|Y_\nn\|_2^2 = 
		\lim_{n\to+\infty}\frac{d_n^\om}{N(\nn)}(d_n^\om)^{-1}\|Y^\om_\nn\|_2^2 = \mu(\om)\cdot 0=0.
		\]
		We can thus assert that $\ms{A}\sim_\lambda \kappa'$, but $\kappa'|_{x\not\in\om} = 0$ and $R_\om(\ms{A})=\mso{A}$, so we can apply \autoref{general_reduced_symbol} and conclude that
		\[
		\mso A=R_\om(\ms {A})\sim_\lambda \kappa'|_{x\in \om} = \kappa.
		\] 
		\item[\textbf{GLT\,3.}] 
		We know that $\{T_\nn(f)\}_n\GLT f$, so \autoref{reduced_symbol} assures us that
		\[
		\{
		T^\om_\nn(f)\}_n= R_\om (\{T_\nn(f)\}_n)\GLTom f(\theta).\]
		Analogously, \autoref{preliminaries_equivalence_D_nn_a} shows that $\{I_\nn(\wt a)  \}_n\GLT \wt a$ and it is easy to check that $\{D_\nn^\om(a) \}_n = R_\om(\{I_\nn(\wt a)  \}_n)$, so 
		\[
		\{D_\nn^\om(a) \}_n \GLTom a.
		\]
		Moreover,  \autoref{zero_distributed_RE}, shows that 
		\[
		\mso Y\sim_\sigma 0
		\implies E_\om(\mso Y)\sim_\sigma 0
		\implies  E_\om(\mso Y)\GLT 0
		\implies \mso Y = R_\om(E_\om(\mso Y))\GLTom 0.
		\]
		\item[\textbf{GLT\,4.}] 
		Using \autoref{MDREZ_relations} and \autoref{reduced_symbol}, we know that
		\[
		\{(A_\nn^\om)^*\}_n = (R_\om(\ms {A}))^*= R_\om(\ms {A^*})\GLTom \ol \kappa'|_{x\in \om} =\ol \kappa.
		\]
		Moreover, $R_\om$ is linear, so we can apply \autoref{reduced_symbol} on $\alpha\ms {A} +\beta\ms{B} \GLT \alpha \kappa'+\beta\xi'$ and obtain
		\[
		\{\alpha A^\om_\nn+\beta B^\om_\nn\}_n = R_\om(\alpha\ms {A} +\beta\ms{B}) \GLTom \alpha \kappa'+\beta\xi'|_{x\in \om} = \alpha\kappa+\beta\xi.
		\]
		In order to prove the last point, remember that $Z_\om(A_\nn)=A_\nn$, so we can use \autoref{MDREZ_relations} and obtain the relation
		\begin{align*}
		R_\om(A_{\nn}B_\nn) &=
		\Pi_{\nn,\om} A_{\nn}B_\nn (\Pi_{\nn,\om})^T \\
		&=
		\Pi_{\nn,\om} D_\nn(\rchi_\om) A_{\nn}D_\nn(\rchi_\om) B_\nn (\Pi_{\nn,\om})^T  \\
		&=
		\Pi_{\nn,\om} D_\nn(\rchi_\om) A_{\nn}(D_\nn(\rchi_\om))^2 B_\nn (\Pi_{\nn,\om})^T\\
		&=
		\Pi_{\nn,\om} A_{\nn}D_\nn(\rchi_\om) B_\nn (\Pi_{\nn,\om})^T\\
		&= \Pi_{\nn,\om} A_{\nn}(\Pi_{\nn,\om})^T\Pi_{\nn,\om} B_\nn (\Pi_{\nn,\om})^T\\
		&= R_\om(A_{\nn})R_\om(B_\nn).
		\end{align*}
		Using \autoref{reduced_symbol}, we conclude that
		\[
		\mso A\mso B = R_\om(\ms {A})R_\om(\ms{B}) = 
		R_\om(\ms {A}\ms{B}) \GLTom \kappa'\xi'|_{x\in \om} = \kappa\xi.
		\]
		\item[\textbf{GLT\,5.}]
		Notice that $\partial\om=\partial(\om^C)$, so $\{D_\nn(\rchi_{\om^C}) \}_n\GLT \rchi_{\om^C}$. 
		If we define $\ms C = \ms{A} + \{D_\nn(\rchi_{\om^C}) \}_n$, then 
		\[
		\ms C\GLT \kappa'(x,\theta) +  \rchi_{\om^C}(x)= 
		\begin{cases}
		\kappa & x\in \om,\\
		1 & x	\not\in\om,
		\end{cases} 
		\]
		so $\kappa'(x,\theta) +  \rchi_{\om^C}(x) = 0$ if and only if $x\in \om$ and $\kappa(x,\theta) =0$. In particular it is different from zero a.e., so 
		\[
		\ms {C^\dag}\GLT (\kappa'(x,\theta) +  \rchi_{\om^C}(x))^{-1}= 
		\begin{cases}
		\kappa^{-1} & x\in \om,\\
		1 & x	\not\in\om.
		\end{cases} 
		\]
		We know that $Z_\om(\ms{A})=\ms{A}$ and
		using \autoref{MDREZ_relations},
		\[
		Z_\om (D_\nn(\rchi_{\om})) = D_\nn(\rchi_{\om})D_\nn(\rchi_{\om})D_\nn(\rchi_{\om})=D_\nn(\rchi_{\om}),\]\[
		R_\om(D_\nn(\rchi_{\om})) = \Pi_{\nn,\om} D_\nn(\rchi_{\om})(\Pi_{\nn,\om})^T=
		\Pi_{\nn,\om}(\Pi_{\nn,\om})^T\Pi_{\nn,\om}(\Pi_{\nn,\om})^T=
		I_\nn^\om I_\nn^\om    = I_\nn^\om .
		\]
		Let $P$ be the permutation matrix in \autoref{ker_reduction}, so that
		\[
		PA_\nn P^T=
		\begin{pmatrix}
		A_\nn^\om&0\\0&0
		\end{pmatrix},
		\qquad 
		PD_\nn(\rchi_{\om^C})P^T = P(I_\nn - D_\nn(\rchi_{\om}))P^T = I_\nn  - \begin{pmatrix}
		I_\nn^\om &0\\0&0
		\end{pmatrix} =
		\begin{pmatrix}
		0&0\\0&
		I_\nn^{\om^C}
		\end{pmatrix},
		\]
		\[
		PC_\nn P^T = P(A_\nn + D_\nn(\rchi_{\om^C}))P^T= \begin{pmatrix}
		A_\nn^\om&0\\0&I_\nn^{\om^C}
		\end{pmatrix} \implies PC_\nn^\dag P^T =  \begin{pmatrix}
		(A_\nn^\om)^\dag&0\\0&I_\nn^{\om^C}
		\end{pmatrix}.
		\]
		Consequentially,
		\begin{align*}
		\begin{pmatrix}
		R_\om(C^\dag_\nn)&0\\0&0
		\end{pmatrix} &= P Z_\om(C^\dag_\nn)  P^T\\
		&= P D_\nn(\rchi_\om) C_\nn^\dag D_\nn(\rchi_\om) P^T\\
		&= P D_\nn(\rchi_\om) P^T
		\begin{pmatrix}
		(A_\nn^\om)^\dag&0\\0&I_\nn^{\om^C}
		\end{pmatrix}
		P D_\nn(\rchi_\om) P^T\\
		&= \begin{pmatrix}
		I_\nn^\om &0\\0&0
		\end{pmatrix} 
		\begin{pmatrix}
		(A_\nn^\om)^\dag&0\\0&I_\nn^{\om^C}
		\end{pmatrix}
		\begin{pmatrix}
		I_\nn^\om &0\\0&0
		\end{pmatrix} \\
		&= \begin{pmatrix}
		(A_\nn^\om)^\dag&0\\0&0
		\end{pmatrix}
		\end{align*}
		and \autoref{reduced_symbol} let us conclude that 
		\[
		\{(A_\nn^\om)^\dag \}_n =R_\om(\ms {C^\dag})\GLTom (\kappa'(x,\theta) +  \rchi_{\om^C}(x))^{-1}|_{x\in\om} = \kappa^{-1}.
		\]
		\item[\textbf{GLT\,6.}]
		If $A^\om_\nn$ is Hermitian, then $A_\nn=E_\om(A^\om_\nn)$ is also Hermitian and $\ms{A}\GLT \kappa'$, so 
		\[
		\{f(A_\nn)\}_n\GLT f(\kappa')
		=
		\begin{cases}
		f(\kappa(x,\theta)) & x\in \om,\\
		f(0) & x\not\in\om.
		\end{cases}
		\]
		Notice that, using \autoref{ker_reduction},
		\[
		PA_\nn P^T =
		\begin{pmatrix}
		A^\om_\nn&0\\0&0
		\end{pmatrix}
		\implies
		Pf(A_\nn) P^T =f(PA_\nn P^T)=  
		\begin{pmatrix}
		f(A^\om_\nn)&0\\0&f(0)I_\nn^{\om^C}
		\end{pmatrix},
		\]
		so one can prove that
		\begin{align*}
		\begin{pmatrix}
		R_\om(f(A_\nn))&0\\0&0
		\end{pmatrix} &= P Z_\om(f(A_\nn))  P^T\\
		&= P D_\nn(\rchi_\om) f(A_\nn) D_\nn(\rchi_\om) P^T\\
		&= P D_\nn(\rchi_\om) P^T
		\begin{pmatrix}
		f(A^\om_\nn)&0\\0&f(0)I_\nn^{\om^C}
		\end{pmatrix}
		P D_\nn(\rchi_\om) P^T\\
		&= \begin{pmatrix}
		I_\nn^\om &0\\0&0
		\end{pmatrix} 
		\begin{pmatrix}
		f(A^\om_\nn)&0\\0&f(0)I_\nn^{\om^C}
		\end{pmatrix}
		\begin{pmatrix}
		I_\nn^\om &0\\0&0
		\end{pmatrix} \\
		&= \begin{pmatrix}
		f(A^\om_\nn)&0\\0&0
		\end{pmatrix}
		\end{align*}
		and consequentially \autoref{reduced_symbol} let us conclude
		\[
		\{f(A^\om_\nn)\}_n = 	R_\om(\{f(A_\nn)\}_n)\GLTom f(\kappa')|_{x\in \om} = f(\kappa).
		\]
		\item[\textbf{GLT\,7.}]
		Notice that if $\mso A\GLTom\kappa$ and $A^\om_\nn = B^\om_{\nn,m}$ for every $m$, then $\{B^\om_{\nn,m}\}_n\GLTom\kappa_m=\kappa$, $\kappa_m$ converges to $\kappa$ and $\{B^\om_{\nn,m}\}_n\xrightarrow{\textnormal{a.c.s.}}\mso A$. 
		
		On the opposite, assume there exist reduced GLT sequences $\{B^\om_{\nn,m}\}_n\GLTom\kappa_m$ such that $\kappa_m$ converges to $\kappa$ in measure and $\{B^\om_{\nn,m}\}_n\xrightarrow{\textnormal{a.c.s.}}\mso A$. In this case, let $B_{\nn,m}=E_\om(B^\om_{\nn,m})$ and let $\kappa'_m$ be the extension of $\kappa$ given by \autoref{symbol_extension}, so that $\{B_{\nn,m} \}_n\GLT \kappa'_m$. Using \autoref{metric_extension}, we know that  $\{B_{\nn,m}\}_n\xrightarrow{\textnormal{a.c.s.}}E_\om(\mso A)$, and moreover
		\[
		\kappa'_m = \begin{cases}
		\kappa_m(x,\theta) &x\in \om,\\
		0 & x\not\in\om,
		\end{cases}
		\to \kappa' = \begin{cases}
		\kappa(x,\theta) &x\in \om,\\
		0 & x\not\in\om,
		\end{cases}
		\]
		so $E_\om(\mso A)\GLT \kappa'$ and \autoref{reduced_symbol} let us conclude that 
		\[
		R_\om(E_\om(\mso A)) =\mso A \GLTom \kappa'|_{x\in\om} = \kappa.
		\]
		\item[\textbf{GLT\,8.}]
		Let $B_{\nn,m}=E_\om(B^\om_{\nn,m})$ and let $\kappa'_m$ be the extension of $\kappa$ given by \autoref{symbol_extension}, so that $\{B_{\nn,m} \}_n\GLT \kappa'_m$.
		Using \autoref{metric_extension}, we know that
		\[
		\{B^\om_{\nn,m}\}_n\xrightarrow{\textnormal{a.c.s.}}\mso A
		\iff 
		\{B_{\nn,m}\}_n\xrightarrow{\textnormal{a.c.s.}}E_\om(\mso {A})
		\iff
		\kappa'_m\to \kappa'.
		\]
		All the functions $\kappa'_m$ and $\kappa'$ are zero outside $\om$, and $\om$ has positive measure, so
		\[
		\kappa'_m-\kappa' \to 0 \iff \kappa'_m-\kappa'|_{x\in\om} \to 0\iff \kappa_m -\kappa \to 0\iff \kappa_m\to\kappa.
		\]
		\item[\textbf{GLT\,9.}] 
		The functions in $ C^\infty(\om)$ are restrictions of functions in $C^\infty([0,1]^d)$, so, given $\kappa$, we can consider $E_\om(\mso A)\GLT\kappa'$ and find $a'_{i,m}\in C^\infty([0,1]^d)$ and trigonometric polynomials $f_{i,m}$ such that 
		$\sum_{i=1}^{N_m}a'_{i,m}(x)f_{i,m}(\theta)$ converges to $\kappa'(x,\theta)$ a.e., and if $a'_{i,m}|_{x\in \om} = a_{i,m}$, then $\sum_{i=1}^{N_m}a'_{i,m}(x)f_{i,m}(\theta)|_{x\in \om} =\sum_{i=1}^{N_m}a_{i,m}(x)f_{i,m}(\theta) $ converges to $\kappa'|_{x\in \om} =\kappa$ almost everywhere.
		Thanks to \textbf{GLT\,3} we know that $\{D_\nn^\om(a_{i,m})\}_n\GLTom a_{i,m}$ and $\{T_{\nn}^\om(f_{i,m})\}_n\GLTom f_{i,m}$, so we can apply \textbf{GLT 4} and obtain
		\[
		\left\{ \sum_{i=1}^{N_m}D_\nn^\om(a_{i,m})T_{\nn}^\om(f_{i,m})\right\}_n\GLTom \sum_{i=1}^{N_m}a_{i,m}(x)f_{i,m}(\theta)\to \kappa,
		\]
		so that \textbf{GLT\,8} let us conclude that 
		\[ \left\{ \sum_{i=1}^{N_m}D_\nn^\om(a_{i,m})T_{\nn}^\om(f_{i,m})\right\}_n
		\xrightarrow{\rm a.c.s.}\mso A.\]
	\end{enumerate}
\end{proof}

\subsection{Isometry with Measurable Functions}

It has been proved that the space of GLT sequences, up to zero-distributed sequences, is actually isomorphic as an algebra and isometric as a complete pseudometric space to the space of measurable functions on an opportune domain. 
In particular, every measurable function with domain $[0,1]^d\times [-\pi,\pi]^d$ is a GLT symbol for some multilevel GLT sequence. The same can be said for the space of reduced GLT sequences.

Let $\hat {\mc S}^\om$ be the map connecting each reduced GLT sequence with its symbol
\[
\hat{\mc S}^\om:\mc G_d^\om\to\mc M_\om
\]
where $\mc M_\om$ is the space of measurable functions from $\om \times [-\pi,\pi]^d$ to $\f C$, equipped with the metric of the convergence in measure $d_m$. \textbf{GLT 4} assures us that $\hat {\mc S}^\om$ is a linear map, and \textbf{GLT 1,3} identify the kernel as the  set $\mc Z$ of zero-distributed sequences. We can thus define the map  
\[
\mc S^\om:\faktor {\mc G_d^\om}{\mc Z}\to\mc M_\om
\]
and prove it is an isomorphism and an isometry. 
\begin{lemma}
	The map $\mc S^\om$ is an isomorphism of algebras. 
\end{lemma}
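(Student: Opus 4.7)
The plan is to show separately that $\mc S^\om$ is well-defined, a homomorphism, injective, and surjective, at each step reducing the question to the corresponding statement about the classical GLT algebra via the extension operator $E_\om$.

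First, I would check well-definedness. Suppose $\mso A \GLTom \kappa$ and $\mso A + \mso Z \GLTom \xi$ with $\mso Z \sim_\sigma 0$. By \textbf{GLT\,3} (reduced), $\mso Z \GLTom 0$, so by \textbf{GLT\,4} (reduced) applied to linearity, $\mso A + \mso Z \GLTom \kappa + 0 = \kappa$. Uniqueness of the reduced symbol (already established in \autoref{uniqueness} combined with \autoref{symbol_extension}) then forces $\xi = \kappa$ a.e.\ on $\om \times [-\pi,\pi]^d$. Hence $\mc S^\om$ descends to a well-defined map on the quotient. Linearity and multiplicativity are immediate from the three bullets of \textbf{GLT\,4} (reduced).

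Next, injectivity. Suppose $\mso A \GLTom 0$. Then by \textbf{GLT\,1} (reduced), $\mso A \sim_\sigma 0$, so the class of $\mso A$ in $\faktor{\mc G_d^\om}{\mc Z}$ is zero. Hence $\ker \mc S^\om = 0$ in the quotient.

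Finally, surjectivity, which is the main point. Given any measurable $\kappa : \om \times [-\pi,\pi]^d \to \f C$, extend it by zero to a measurable function $\kappa'$ on $[0,1]^d \times [-\pi,\pi]^d$. By \textbf{GLT\,9} (classical), there exists a GLT sequence $\ms A \GLT \kappa'$. Setting $\mso S := R_\om(\ms A) \in \mc G_d^\om$, \autoref{reduced_symbol} yields $\mso S \GLTom \kappa'|_{x\in\om} = \kappa$, so $\mc S^\om([\mso S]) = \kappa$.

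The only subtle point — and the place I would be careful — is verifying that the multiplicative structure descends properly to the quotient by $\mc Z$: I must check that if $\mso A' = \mso A + \mso Z$ and $\mso B' = \mso B + \mso W$ with $\mso Z, \mso W$ zero-distributed, then $\mso{A'}\mso{B'} - \mso A\mso B$ is again zero-distributed. This requires the sparse-unboundedness of reduced GLT sequences (inherited from \textbf{GLT\,1} reduced, which gives a singular value symbol) together with \textbf{ACS\,3} applied to the constant a.c.s.\ $\{\mso A\}$ and $\{\mso A'\}$; the terms $\mso A \mso W$, $\mso Z \mso B'$ then turn out to be zero-distributed, giving bilinearity of multiplication on the quotient. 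Once this is in place, all four axioms of an algebra isomorphism are verified.
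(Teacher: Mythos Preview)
Your proof is correct and follows essentially the same route as the paper: linearity and injectivity are read off from the reduced \textbf{GLT\,1}, \textbf{GLT\,3}, \textbf{GLT\,4} (exactly as the paper does in the paragraph preceding the lemma), and surjectivity is obtained by the identical construction---extend $\kappa$ by zero to $\kappa'$ on $[0,1]^d\times[-\pi,\pi]^d$, pick $\ms A\GLT\kappa'$ via classical \textbf{GLT\,9}, and set $\mso S=R_\om(\ms A)$. Your treatment is in fact more careful than the paper's, which dispatches the algebra-homomorphism and quotient-compatibility issues with the phrase ``by construction''; your explicit check that multiplication descends to $\faktor{\mc G_d^\om}{\mc Z}$ (via sparse unboundedness and \textbf{ACS\,3}) fills a point the paper leaves implicit.
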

\begin{proof}
	By construction, we already know that $\mc S^\om$ is a linear injective map. Given now any $\kappa\in\mc M_\om$, let $\kappa'$ be the extension of $\kappa$ to $[0,1]^d$ obtained by setting $\kappa'=0$ outside $\om$. Let $\ms A\GLT \kappa'$, and notice that $\mc S^\om (R_\om(\ms A))=\kappa$, proving that $S^\om$ is also surjective. 
\end{proof}

\begin{theorem}\label{isometry}
	The map $\mc S^\om$ is an isometry of pseudometric spaces.
\end{theorem}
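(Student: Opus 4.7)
The plan is to prove the isometry directly, by computing $\dacs{\mso A}{\mso B}$ from the asymptotic singular-value distribution of $\mso D:=\mso A-\mso B$ and recognizing the result as $d_{\rm mea}(\kappa,\xi)$. First I would invoke \textbf{GLT\,4} of the reduced axioms to get $\mso D\GLTom f:=\kappa-\xi$, and then \textbf{GLT\,1} to obtain $\mso D\sim_\sigma f$. Writing $A(L):=\mu\{(x,\theta)\in\om\times[-\pi,\pi]^d:\ |f(x,\theta)|>L\}/\mu(\om\times[-\pi,\pi]^d)$ for the normalized tail distribution of $|f|$, the relation $\mso D\sim_\sigma f$ is equivalent (by approximating $\chi_{(L,\infty)}$ from above and below by continuous compactly supported test functions) to the pointwise convergence $F_n(L):=\#\{j:\ \sigma_j(D^\om_\nn)>L\}/d_n^\om\to A(L)$ at every continuity point $L$ of the right-continuous non-increasing function $A$. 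A direct substitution shows $p_{\rm mea}(f)=\inf_{L\ge 0}\{A(L)+L\}$, while $\dacs{\mso A}{\mso B}=\limsup_n p(D^\om_\nn)$ with $p(D^\om_\nn)=\min_{i=1,\dots,d_n^\om+1}\{(i-1)/d_n^\om+\sigma_i(D^\om_\nn)\}$ and the convention $\sigma_{d_n^\om+1}=0$.

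For the upper bound, at any continuity point $L$ of $A$ and any $\varepsilon>0$, I would set $i_n:=\lceil(A(L)+\varepsilon)d_n^\om\rceil+1$; the convergence $F_n(L)\to A(L)$ forces $\sigma_{i_n}(D^\om_\nn)\le L$ for $n$ large, hence $p(D^\om_\nn)\le (i_n-1)/d_n^\om+L\le A(L)+L+\varepsilon+1/d_n^\om$. Letting $\varepsilon\to 0$ and taking the infimum over the dense set of continuity points of $A$ yields $\limsup_n p(D^\om_\nn)\le\inf_{L}\{A(L)+L\}=p_{\rm mea}(f)$.

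For the lower bound I would proceed by contradiction. Suppose $p(D^\om_{\nn_k})<p_{\rm mea}(f)-\varepsilon$ along a subsequence, with minimizers $i_k$ producing $t_k:=(i_k-1)/d_{n_k}^\om\in[0,1]$ and $L_k:=\sigma_{i_k}(D^\om_{\nn_k})\ge 0$ satisfying $t_k+L_k<p_{\rm mea}(f)-\varepsilon$. Extract a further subsequence with $t_k\to t^*\in[0,1]$ and $L_k\to L^*\in[0,\infty)$ (the boundedness of the sum forces both limits to be finite). The identity $\sigma_{i_k}=L_k$ yields $\#\{j:\ \sigma_j(D^\om_{\nn_k})>L_k\}/d_{n_k}^\om\le t_k$; by comparing $L_k$ to continuity points of $A$ lying just above $L^*$ and invoking right-continuity of $A$ at $L^*$, one concludes $A(L^*)\le t^*$. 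Then $p_{\rm mea}(f)\le A(L^*)+L^*\le t^*+L^*<p_{\rm mea}(f)-\varepsilon$, a contradiction. Combining both bounds, $\dacs{\mso A}{\mso B}=p_{\rm mea}(f)=d_{\rm mea}(\kappa,\xi)$.

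The hard part is the lower bound: $A$ may have countably many jumps and $F_n$ is only guaranteed to converge at continuity points, so one must carefully sandwich $L^*$ between nearby continuity points of $A$ and exploit right-continuity to transfer the inequality from $F_n$ to $A$ in the limit. Everything else amounts to reading $p$ and $p_{\rm mea}$ as two parametrizations of the same quantile-type infimum, so that the asymptotic singular-value symbol of $\mso D$ converts mechanically into the formula for $d_{\rm mea}(\kappa,\xi)$.
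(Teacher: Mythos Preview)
Your proof is correct and follows essentially the same route as the paper: reduce to the difference $\mso D=\mso A-\mso B\GLTom f$, use \textbf{GLT\,1} to get $\mso D\sim_\sigma f$, and then match $\limsup_n p(D^\om_\nn)$ with $p_{\rm mea}(f)$ via the asymptotic singular-value distribution. The only cosmetic difference is that you package the argument through the tail function $A(L)$ and Portmanteau-style convergence at continuity points, whereas the paper carries out the same upper/lower bound via explicit continuous test functions sandwiching indicator functions; the underlying idea and the structure of both bounds are identical.
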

\begin{proof}
	Let $\ms {S^\om}\GLTom \kappa$ and notice that 
	\[
	\dacs{\ms {S^\om}}{\mso 0} =\rho(\ms {S^\om}) =\limsup_{n\to \infty} \min_{i=1,\dots,d_n^\om+1} \left\{ \frac{i-1}{d_n^\om} + \sigma_i(S^\om_\nn) \right\},
	\]\[
	d_{\rm mea}(\kappa, 0) = \rho_{\rm mea}(\kappa) =\inf_{E\cu \om\times[-\pi,\pi]^d} \left\{ \frac{\mu(E^C)}{\mu(\om)(2\pi)^d} + \ess\sup_E |\kappa|  \right\}.
	\]
	Call $L := \rho_{\rm mea}(\kappa)$. 
	By the definition of the infimum, if we set $\ve>0$, we can always find a measurable set  $H$ such that 
	\[
	\frac{\mu(H^C)}{\mu(\om)(2\pi)^d}+ \ess\sup_H |\kappa| \le L + \ve.
	\]
	From now on, let us call $M=\ess\sup_H |\kappa|$.
	Let $F:\f R\to \f R$  be a continuous and compact supported function such that $\rchi_{[-\ve,M+\ve]}\ge F\ge \rchi_{[0,M]}$. 
	\[
	\frac{1}{d_n^\om} \sum_{i=1}^{d_n^\om} F(\sigma_i(S^\om_\nn)) \le\frac{\#\left\{ i:\sigma_i(S^\om_\nn) \le M+\ve  \right\}}{d_n^\om},
	\]
	\[
	\frac{1}{\mu(\om)(2\pi)^d}\int_{\om\times[-\pi,\pi]^d} F(|\kappa(x)|) dx \ge  \frac{\mu(|\kappa|\le M)}{\mu(\om)(2\pi)^d} \ge \frac{\mu(H)}{\mu(\om)(2\pi)^d}.
	\]
	Since $\ms {S^\om} \sim_\sigma \kappa$, we know that
	\[
	\liminf_{n\to\infty}
	\frac{\#\left\{ i:\sigma_i(S^\om_\nn) \le M+\ve  \right\}}{d_n^\om} \ge 
	\lim_{n\to\infty}
	\frac{1}{d_n^\om} \sum_{i=1}^{d_n^\om} F(\sigma_i(S^\om_\nn)) 
	= \frac{1}{\mu(\om)(2\pi)^d}\int_{\om\times[-\pi,\pi]^d} F(|\kappa(x)|) dx  \ge \frac{\mu(H)}{\mu(\om)(2\pi)^d}
	\]
	\[
	\implies 
	\limsup_{n\to\infty}
	\frac{\#\left\{ i:\sigma_i(S^\om_\nn) > M+\ve  \right\}}{d_n^\om} \le 
	\frac{\mu(H^C)}{\mu(\om)(2\pi)^d}\le L+\ve -M,
	\]
	but
	\begin{align*}
	\min_{i=1,\dots,d_n^\om+1} \left\{ \frac{i-1}{d_n^\om} + \sigma_i(S^\om_\nn)\right\} &\le 
	\frac{\#\left\{ i:\sigma_i(S^\om_\nn) > M+\ve  \right\}}{d_n^\om} + M+\ve \\
	\implies 
	\rho(\ms {S^\om}) &=\limsup \min_{i=1,\dots,d_n^\om+1} \left\{ \frac{i-1}{d_n^\om} + \sigma_i(S^\om_\nn) \right\}\\
	&\le \limsup \frac{\#\left\{ i:\sigma_i(S^\om_\nn) > M+\ve  \right\}}{d_n^\om} + M+\ve\\
	&\le L+2\ve = \rho_{\rm mea}(\kappa) + 2\ve.
	\end{align*}
	For the converse, let $j_n$ be the sequence of indices that satisfies
	\[
	r_n:= \min_{i=1,\dots,d_n^\om+1}\left\{\frac{i-1}{d_n^\om} + \sigma_i(S^\om_\nn)\right\} = 
	\frac{j_n-1}{d_n^\om} + \sigma_{j_n}(S^\om_\nn).
	\] 
	The sequence $r_n$ is bounded by $L+\ve$ definitively, and $\frac{j_n-1}{d_n^\om}\le 1$, so $\sigma_{j_n}(S^\om_\nn)$ is also bounded and  admits a subsequence $j_{n_k}$ that converges to a value $N$. 
	Consequently,
	\[
	\rho(\mso{S}) = \limsup_{n\to\infty}\frac{j_n-1}{d_n^\om} + \sigma_{j_n}(S^\om_\nn) \ge N+ \limsup_{n\to\infty }\frac{j_{n_k}-1}{d_{n_k}^\om}
	.
	\]
	Let $F:\f R\to \f R$  be a continuous and compact supported function such that $\rchi_{[-\ve,N+2\ve]}\ge F\ge \rchi_{[0,N+\ve]}$. 
	\[
	\frac{1}{d_n^\om} \sum_{i=1}^{d_n^\om} F(\sigma_i(S^\om_\nn)) \ge\frac{\#\left\{ i:\sigma_i(S^\om_\nn) \le N+\ve  \right\}}{d_n^\om},
	\]
	\[
	\frac{1}{\mu(\om)(2\pi)^d}\int_{\om\times[-\pi,\pi]^d} F(|\kappa(x)|) dx \le  \frac{\mu(|\kappa|\le N+2\ve)}{\mu(\om)(2\pi)^d}.
	\]
	Since $\ms {S^\om} \sim_\sigma \kappa$, we know that
	\[
	\limsup_{n\to\infty}
	\frac{\#\left\{ i:\sigma_i(S^\om_\nn) \le N+\ve  \right\}}{d_n^\om} \le 
	\lim_{n\to\infty}
	\frac{1}{d_n^\om} \sum_{i=1}^{d_n^\om} F(\sigma_i(S^\om_\nn)) 
	= \frac{1}{\mu(\om)(2\pi)^d}\int_{\om\times[-\pi,\pi]^d} F(|\kappa(x)|) dx  \le \frac{\mu(|\kappa|\le N+2\ve)}{\mu(\om)(2\pi)^d}
	\]
	\[
	\implies 
	\liminf_{n\to\infty}
	\frac{\#\left\{ i:\sigma_i(S^\om_\nn) > N+\ve  \right\}}{d_n^\om} \ge 
	\frac{\mu(|\kappa|> N+2\ve)}{\mu(\om)(2\pi)^d},
	\]
	Notice that definitively in $k$, $\#\left\{ i:\sigma_i(S^\om_\nn) > N+\ve  \right\}\le \#\left\{ i:\sigma_i(S^\om_\nn) > \sigma_{j_k} (S^\om_\nn) \right\}\le j_{n_k}-1$, so
	\begin{align*}
	\rho(\mso{S}) &\ge N+ \limsup_{n\to\infty }\frac{j_{n_k}-1}{d_{n_k}^\om}\\
	&\ge N + \liminf_{n\to\infty }\frac{j_{n_k}-1}{d_{n_k}^\om}\\
	&\ge N + \liminf_{k\to\infty}\frac{\#\left\{ i:\sigma_i(S^\om_\nn) > N+\ve  \right\}}{d_{n_k}^\om}\\
	&\ge N+2\ve  + \frac{\mu(|\kappa|> N+2\ve)}{\mu(\om)(2\pi)^d} -2\ve\\
	&\ge \rho_{\rm mea}(\kappa) -2\ve.
	\end{align*}
	Since we proved that $\rho_{\rm mea}(\kappa) +2\ve\ge \rho(\mso{S})\ge \rho_{\rm mea}(\kappa) -2\ve$ for every $\ve >0$, we conclude that $\rho(\mso{S})= \rho_{\rm mea}(\kappa)$.
	Now the proof is finished, since if we take $\mso A\GLTom \kappa$ and $\mso B\GLTom \xi$, then we have by \textbf{GLT 4} that $\mso A-\mso B\GLTom \kappa-\xi$, so
	\[
	\dacs{\mso A}{\mso B} = \rho(\mso A-\mso B) =\rho_{\rm mea}(\kappa-\xi) = d_{\rm mea}(\kappa,\xi).
	\]
\end{proof}

\begin{corollary}
	The space $\mc G_d^\om$ is a complete pseudometric space when equipped with the acs distance.
\end{corollary}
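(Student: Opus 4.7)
The plan is to transfer completeness from $(\mc M_\om, d_{\rm mea})$ through the isometry $\mc S^\om$ established in \autoref{isometry}. Let $\{\{A^\om_{\nn,m}\}_n\}_m$ be any $d_{\rm a.c.s.}$-Cauchy sequence of elements of $\mc G_d^\om$, and let $\kappa_m$ denote the reduced GLT symbol of $\{A^\om_{\nn,m}\}_n$. By the isometry,
\[
d_{\rm mea}(\kappa_m, \kappa_{m'}) = d_{\rm a.c.s.}(\{A^\om_{\nn,m}\}_n, \{A^\om_{\nn,m'}\}_n),
\]
so $\{\kappa_m\}_m$ is Cauchy in $(\mc M_\om, d_{\rm mea})$, which is a complete pseudometric space by the results recalled in Section~\ref{Review}. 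Hence $\kappa_m$ converges in measure to some $\kappa \in \mc M_\om$.

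By surjectivity of $\mc S^\om$ (established in the lemma preceding \autoref{isometry}), we can select a reduced GLT sequence $\mso A \in \mc G_d^\om$ with $\mso A \GLTom \kappa$; concretely, one may take $\mso A = R_\om(\ms B)$ where $\ms B \GLT \kappa'$ and $\kappa'$ is the extension of $\kappa$ by zero outside $\om$. Applying the isometry a second time,
\[
d_{\rm a.c.s.}(\{A^\om_{\nn,m}\}_n, \mso A) = d_{\rm mea}(\kappa_m, \kappa) \xrightarrow{m\to\infty} 0,
\]
which shows the original Cauchy sequence converges a.c.s.\ to $\mso A \in \mc G_d^\om$.

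The approach contains essentially no obstacle: all the difficult work has already been discharged, namely the isometry itself and the surjectivity of $\mc S^\om$. The only point worth a sanity check is that \autoref{isometry} is stated directly for pairs $\mso A, \mso B \in \mc G_d^\om$ (not merely for cosets modulo $\mc Z$), which is precisely the form used above; this is valid because the distances $d_{\rm a.c.s.}$ and $d_{\rm mea}$ are pseudometrics, insensitive to modification of either side by a zero-distributed sequence or by a function that vanishes a.e.
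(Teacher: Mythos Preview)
Your proof is correct and follows the same overall strategy as the paper---transferring completeness from $(\mc M_\om,d_{\rm mea})$ via \autoref{isometry}---but the final step differs slightly. The paper invokes completeness of the ambient space of \emph{all} matrix-sequences under $d_{\rm a.c.s.}$ to obtain an a.c.s.\ limit $\mso A$ of the Cauchy sequence, and then appeals to the reduced \textbf{GLT\,7} to conclude that $\mso A\GLTom\kappa$, hence $\mso A\in\mc G_d^\om$. You instead use surjectivity of $\mc S^\om$ to \emph{construct} a candidate $\mso A\GLTom\kappa$ and then apply the isometry a second time to verify convergence. Your route is marginally more self-contained (it does not need the closure property \textbf{GLT\,7} or completeness of the larger space), while the paper's route makes explicit that the a.c.s.\ limit---which exists a~priori in the ambient space---already lies in $\mc G_d^\om$.
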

\begin{proof}
	Suppose that $\{B^\om_{\nn,m}\}_n$ is a Cauchy sequence in the acs metric and $\{B^\om_{\nn,m}\}_n\GLTom \kappa_m$. By Theorem \autoref{isometry}, also $\kappa_m$ is a Cauchy sequence for the convergence in measure. Both the spaces of matrix sequences and measurable functions are complete spaces, so $\{B^\om_{\nn,m}\}_n\acs\mso A$ and $\kappa_m\to \kappa$. \textbf{GLT 7} let us conclude that $\mso A\GLTom \kappa$, so any Cauchy sequence in $\mc G_d^\om$ converges in $\mc G_d^\om$.
\end{proof}

Let us now show how the theory of reduced GLT is useful in the context of linear PDE and their discretization.

\section{Shortley-Weller Approximation}\label{SW}

Consider a linear partial differential equation
\[
\mc L(u)(x) = b(x) \qquad x\in \om^\circ
\]
equipped with some boundary conditions (Dirichlet, Neumann, etc.) when $x\in\partial\om$.
Suppose that  $\om\cu [0,1]^d$ is a  closed Peano-Jordan measurable set and $b$ is a function defined over $\om$.

We can try to discretize the equation by considering the $d$-dimensional grid $\Xi_n$ over $[0,1]^d$ and by applying a Finite Difference method only on the points of the grid inside $\om$.
Notice that the union of $\Xi_n$ for every $n$ is  the set $\f Q^d\cap [0,1]^d$, that  is dense in $[0,1]^d$, and consequently even the set
\[
\bigcup_{n\in\f N} (\Xi_n\cap \om^\circ)  = \f Q^d\cap [0,1]^d \cap \om^\circ
\]
is dense in $\om^\circ$. The grids are hence bound to describe well the interior of $\om$, but the same cannot be said about the border. In fact, it may happen that
\[
\f Q^d\cap \delta \om = \emptyset
\]
and in this case no point from $\Xi_n$ belongs to $\partial\om$, hence the discretization does not take in account the boundary conditions of the problem. When dealing with hyper-tetrahedrons, one can build  regular grids whose points on the border  are dense through an affinity. 
Otherwise, we need to use non regular grids shaped accordingly to the boundary, like the ones that arise from the \textit{Shortley-Weller Approximation} for a convection-diffusion-reaction linear PDE.
%TODO: check at the end 

\subsection{Convection-Diffusion-Reaction PDE}

Let us consider the problem 
\begin{equation}\label{problem.dFD}
\left\{\begin{array}{ll}
\displaystyle-\sum_{i=1}^d\frac{\partial}{\partial x_i}\Bigl(a_{i}\frac{\partial u}{\partial x_i}\Bigr)+\sum_{i=1}^db_i\frac{\partial u}{\partial x_i}+cu=f, & \mbox{in }\om^\circ,\\
u=0, & \mbox{on }\partial(\om).
\end{array}\right. 
\end{equation}
where $a_{i}$, $b_i$, $c$ and $f$ are given real-valued continuous functions defined on $\om$ 
%TODO: check
and $a_{i}\in C^1(\om)$. Moreover, suppose that $\om$ is a closed Peano-Jordan measurable set inside $[0,1]^d$ with positive measure.
We set $\hh=\frac\uu{\nn+\uu}$, so that $x_\jj=\jj \hh$ for $\jj=\mathbf0,\ldots,\nn+\uu$ are the points of the grid $\Xi_{n}$.
It is also natural to assume that $\nn +\uu= n{\bm c}$, where $\bm c$ is a vector of rational constants.
Let $\mathbf e_i$ be the vectors of the canonical basis of $\mathbb R^d$ and notice that 
$x_\jj + sh_i\bm e_i = x_{\jj + s\bm e_i}$. Then, for $\jj=\uu,\ldots,\nn$, we try to approximate the terms appearing in \eqref{problem.dFD} 
according to the classical central FD discretizations on $[0,1]^d$
as follows:
\begin{align}
\notag\left.\frac{\partial}{\partial x_i}\left(a_{i}\frac{\partial u}{\partial x_i}\right)\right|_{x=x_\jj}
&
\approx 
\frac
{a_{i}
	\dfrac{\partial u}{\partial x_i}(x_{\jj + \mathbf e_i/2})
	-
	a_{i}
	\dfrac{\partial u}{\partial x_i}(x_{\jj - \mathbf e_i/2})
}
{h_i}
\\
\label{FD.jj}
&
\approx 
a_{i}(x_{\jj + \mathbf e_i/2})
\frac
{u(x_{\jj + \mathbf e_i}) - u(x_\jj)}
{h_i^2}-
a_{i}(x_{\jj - \mathbf e_i/2})
\frac
{u(x_\jj) - u(x_{\jj - \mathbf e_i})}
{h_i^2}
\\
\label{FD.j}\left.b_i\frac{\partial u}{\partial x_i}\right|_{x=x_\jj} &\approx b_i(x_\jj)\frac{u(x_{\jj+\mathbf e_i})-u(x_{\jj-\mathbf e_i})}{2h_i},\\ 
%&j&=1,\ldots,d,\\
\label{FD.}cu|_{x=x_\jj}&=c(x_\jj)u(x_\jj),
\end{align}
for $i=1,\ldots,d$. This approach requires that all the segments connecting the points $x_\jj$  with $\jj=\uu,\ldots,\nn$,   to their neighbours $x_{\jj \pm \bm e_i}$   still lie inside the domain of the problem. It always happens if the domain is $[0,1]^d$, but when we 
consider $\om$, we need to modify the scheme by adding some points.
In particular, we define a new set of neighbours for every point in $\Xi_n':=\om^\circ\cap \Xi_n$.
Given $x_\jj\in \Xi'_n$ and a direction $\bm e_i$, we can set the numbers $s_i^+(\jj),s_i^-(\jj)$ as
\[
s_i^\pm(\jj) = 
\sup\Set{t\in[0,1]|  x_\jj \pm rh_i\bm e_i\in \om^\circ \quad\forall\, 0\le r\le t       }
\]
that is the size of the biggest connected line contained in the segment connecting $x_\jj$ to $x_{\jj + \bm e_i}$ and containing $x_\jj$. 
We can thus call $x_\jj+s_i^\pm(\jj)h_i\bm e_i = x_{\jj + s_i^\pm(\jj)\bm e_i}$ the right/left neighbour of $x_\jj$ along the direction $\bm e_i$.
The values $s_i^\pm(\jj)$ depend on the point $x_\jj$, but when it is evident, we can omit the index and write simply $s_i^\pm$. 

As we can see in \autoref{fig:neighbours_FD}, even if $x_\jj$ and $x_{\jj + \bm e_i}$ belong to $\Xi'_n$, it doesn't mean that $s_i^+(\jj)=1$, because the segment connecting  $x_\jj$  to $x_{\jj + \bm e_i}$ may not be contained entirely in $\om^\circ$ (this happens often, for example, when $\om$ is not convex).

\begin{figure}[t]
	\centering
	\begin{minipage}[b]{0.45\textwidth}
		\centering
		\begin{tikzpicture}
		\clip (-.2,-.2) rectangle (5.2,5.2);
		\draw[-] (0,0) -- (5,1) -- (2,5) -- (0,0);
		\foreach \x in {0,...,10}{
			\foreach \y in {0,...,10}{
				\ifthenelse{\cnttest{5*\x-2*\y}>{0} \AND \cnttest{\x-5*\y}<{0}\AND \cnttest{4*\x+3*\y}<{46}}
				{\node[circle,fill=black,scale=.3] (a) at (\x/2,\y/2) {};}
				{\ifthenelse{\cnttest{5*\x-2*\y}={0} \OR \cnttest{\x-5*\y}={0}\OR \cnttest{4*\x+3*\y}={46}}
					{\node[circle,fill=red,scale=.3](a) at (\x/2,\y/2) {};}
					{\node[circle,fill=black,fill opacity=.2,scale=.3](a) at (\x/2,\y/2) {};}	
		}}}
		
		\foreach \x in {0,...,4}{
			\node[circle,fill=red,scale=.3](a) at (\x/2,5*\x/4) {};
			\node[circle,fill=red,scale=.3](a) at (\x/2,\x/10) {};
			\draw[-,opacity=.2] (\x/2,0)--(\x/2,5);
		}
		
		\foreach \x in {5,...,10}{
			\node[circle,fill=red,scale=.3](a) at (\x/2,23/3-2*\x/3) {};
			\node[circle,fill=red,scale=.3](a) at (\x/2,\x/10) {};
			\draw[-,opacity=.2] (\x/2,0)--(\x/2,5);
		}
		
		\foreach \y in {0,...,2}{
			\node[circle,fill=red,scale=.3](a) at (\y/5,\y/2) {};
			\draw[-,opacity=.2] (0,\y/2)--(5,\y/2);
		}
		
		\foreach \y in {3,...,10}{
			\node[circle,fill=red,scale=.3](a) at (\y/5,\y/2) {};
			\node[circle,fill=red,scale=.3](a) at (46/8 - 3*\y/8,\y/2) {};
			\draw[-,opacity=.2] (0,\y/2)--(5,\y/2);}
		
		\end{tikzpicture}
	\end{minipage}
	\hfill
	\begin{minipage}[b]{0.45\textwidth}
		\centering
		\begin{tikzpicture}
		\clip (-.2,-.2) rectangle (5.2,5.2);
		\draw [-]   (.5,1) to[out=80,in=130] (4,4)  to[out=-130,in=170] (3,2.5) to[out=-10,in=10] (4.5,1) to[out=190,in=-50] (3,.5) to[out=130,in=-50] (2.75,1.75) to[out=130,in=50] (2.5,.5) to[out=-130,in=-100]  (.5,1);
		\draw [-]   (3.5,3) to[out=130,in=130] (4.5,3.5)  to[out=-130,in=-20] (4.5,2) to[out=160,in=-50] (3.5,3);
		
		\foreach \X in {0,...,10}{
			\foreach \Y in {0,...,10}{
				\path let \n1={int(\X+11*\Y)} in node [circle]  (a-\n1) at (\X/2,\Y/2) {};
		}}
		
		\foreach \x in {0,...,10}{
			\draw[-,opacity=.2] (\x/2,0)--(\x/2,5);
			\draw[-,opacity=.2] (0,\x/2)--(5,\x/2);
			%		\node[scale = .5, fill opacity = .5] (a) at (\x/2,-.2) {\x};
			%		\node[scale = .5, fill opacity = .5] (a) at (-.2, \x/2) {\x};
		}

		\foreach \n in {0,...,12,19,20,21,22,32,33,34,43,44,45,54,55,56,57,62,65,66,67,68,75,76,...,80,87,88,...,92,97,98,...,120}{
			\node[circle,fill=black,fill opacity=.2,scale=.3](a) at (a-\n) {};
		}
		\foreach \n in {16,17,23,31,53,61,72,73,84,86,96}{
			\node[circle,fill=red,scale=.3](a) at (a-\n) {};
		}
		\foreach \n in {13,14,15,18,24,25,...,30,35,36,...,42,46,47,...,52,58,59,60,63,64,69,70,71,74,81,82,83,85,93,94,95}{
			\node[circle,fill=black,scale=.3](a) at (a-\n) {};
		}

		\node[circle,fill=red,scale=.3](a) at (1,.3) {};
		\node[circle,fill=red,scale=.3](a) at (1,2.37) {};
		\node[circle,fill=red,scale=.3](a) at (1.5,.18) {};
		\node[circle,fill=red,scale=.3](a) at (1.5,3.17) {};
		\node[circle,fill=red,scale=.3](a) at (2,.22) {};
		\node[circle,fill=red,scale=.3](a) at (2,3.75) {};
		\node[circle,fill=red,scale=.3](a) at (2.5,4.15) {};
		\node[circle,fill=red,scale=.3](a) at (3,4.36) {};
		\node[circle,fill=red,scale=.3](a) at (3.5,.43) {};
		\node[circle,fill=red,scale=.3](a) at (3.5,4.34) {};
		\node[circle,fill=red,scale=.3](a) at (3.5,2.32) {};
		\node[circle,fill=red,scale=.3](a) at (4,.77) {};
		\node[circle,fill=red,scale=.3](a) at (4,2.04) {};
		\node[circle,fill=red,scale=.3](a) at (4,2.35) {};
		\node[circle,fill=red,scale=.3](a) at (4,3.58) {};
		\node[circle,fill=red,scale=.3](a) at (4.5,1.62) {};
		\node[circle,fill=red,scale=.3](a) at (4.5,2.72) {};

		\node[circle,fill=red,scale=.3](a) at (.68,.5) {};
		\node[circle,fill=red,scale=.3](a) at (3.62,.5) {};
		\node[circle,fill=red,scale=.3](a) at (2.63,1) {};
		\node[circle,fill=red,scale=.3](a) at (2.87,1) {};
		\node[circle,fill=red,scale=.3](a) at (.63,1.5) {};
		\node[circle,fill=red,scale=.3](a) at (2.62,1.5) {};
		\node[circle,fill=red,scale=.3](a) at (2.87,1.5) {};
		\node[circle,fill=red,scale=.3](a) at (4.6,1.5) {};
		\node[circle,fill=red,scale=.3](a) at (.82,2) {};
		\node[circle,fill=red,scale=.3](a) at (4.05,2) {};
		\node[circle,fill=red,scale=.3](a) at (1.07,2.5) {};
		\node[circle,fill=red,scale=.3](a) at (3.87,2.5) {};
		\node[circle,fill=red,scale=.3](a) at (4.57,2.5) {};
		\node[circle,fill=red,scale=.3](a) at (1.37,3) {};
		\node[circle,fill=red,scale=.3](a) at (4.42,3) {};
		\node[circle,fill=red,scale=.3](a) at (1.76,3.5) {};
		\node[circle,fill=red,scale=.3](a) at (3.77,3.5) {};
		\node[circle,fill=red,scale=.3](a) at (2.28,4) {};

		%\node[circle,fill=red,scale=.3](a) at (a-\n) {};

		\end{tikzpicture}
	\end{minipage}
	\captionsetup{font={small,it}} %TODO mettere nel preambolo
	\caption{
		Points of the grid $\Xi_n$ over two different domains $\om$. The points in $\Xi_n'$ are black, and their neighbours on the boundary are red.}
	\label{fig:neighbours_FD}
\end{figure}
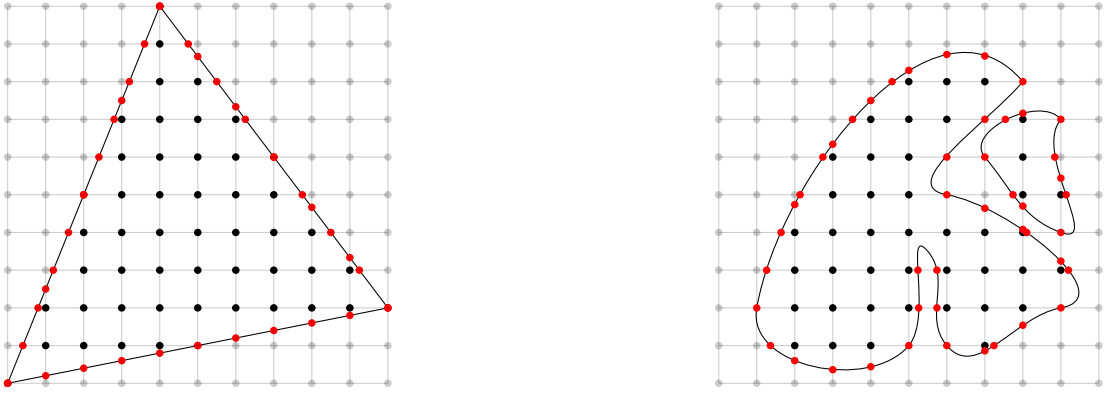

Notice that every neighbour is a point of $\om$, so when one of the neighbours is not included in $\Xi'_n$, it surely belongs the boundary $\partial\om$, and in any case we have $s_i^\pm>0$. Adding these boundary points to $\Xi_n'$, we obtain the discretization grid $\Xi_n^\om$ over $\om$, and we can rewrite the formulas (\ref{FD.jj})-(\ref{FD.}) for $x_\jj\in\Xi'_n$  as 
\begin{align}
\notag\left.\frac{\partial}{\partial x_i}\left(a_i\frac{\partial u}{\partial x_i}\right)\right|_{x=x_\jj}
&
\approx 
\frac
{a_i
	\dfrac{\partial u}{\partial x_i}(x_{\jj + s_i^+\mathbf e_i/2})
	-
	a_i
	\dfrac{\partial u}{\partial x_i}(x_{\jj - s_i^-\mathbf e_i/2})
}
{\frac 12(s_i^++s_i^-)h_i}
\\
\label{FDsw.jj}
&
\approx 
a_i(x_{\jj + s_i^+\mathbf e_i/2})
\frac
{u(x_{\jj + s_i^+\mathbf e_i}) - u(x_\jj)}
{\frac 12s_i^+(s_i^++s_i^-)h_i^2}-
a_i(x_{\jj - s_i^-\mathbf e_i/2})
\frac
{u(x_\jj) - u(x_{\jj - s_i^-\mathbf e_i})}
{\frac 12s_i^-(s_i^++s_i^-)h_i^2}
\\ 
\label{FDsw.j}\left.b_i\frac{\partial u}{\partial x_i}\right|_{x=x_\jj} &\approx b_i(x_\jj)\frac{u(x_{\jj+s_i^+\mathbf e_i})-u(x_{\jj-s_i^-\mathbf e_i})}{(s_i^++s_i^-)h_i},\\ 
%&j&=1,\ldots,d,\\
\label{FDsw.}cu|_{x=x_\jj}&=c(x_\jj)u(x_\jj),
\end{align}
called the difference scheme of \textit{Shortley and Weller} \cite{short}. Notice that when $s_j^\pm=1$  for every $j$ and sign $\pm$, we fall again in the classical scheme of central differences.

The evaluations $u(x_\jj)$ of the solution at the grid points $x_\jj\in \Xi_n^\om$ are approximated by the values $u_\jj$, where $u_\jj=0$ for $x_\jj\in \partial\om$, and the vector $\bm u=(u_\jj)^T_{x_\jj\in \om^\circ }$ is the solution of the linear system
\begin{align}\label{sistema.FD}
\notag &-\sum_{i=1}^d a_i(x_{\jj + s_i^+\mathbf e_i/2})
\frac
{u_{\jj + s_i^+\mathbf e_i} - u_\jj}
{\frac 12s_i^+(s_i^++s_i^-)h_i^2}-
a_i(x_{\jj - s_i^-\mathbf e_i/2})
\frac
{u_\jj - u_{\jj - s_i^-\mathbf e_i}}
{\frac 12s_i^-(s_i^++s_i^-)h_i^2}
\\
&+\sum_{i=1}^d
b_i(x_\jj)\frac{u_{\jj+s_i^+\mathbf e_i}-u_{\jj-s_i^-\mathbf e_i}}{(s_i^++s_i^-)h_i}
+c(x_\jj)u_\jj=f(x_\jj),\qquad\jj : x_\jj\in \om^\circ.
\end{align}
If we order the indices $\jj$ in $\Xi_n'$ by lexicographic order, then we can  write the system in compact form as
\[
A_\nn^{\om^\circ} \bm u = \bm f,
\]
where $A_\nn^{\om^\circ}\in \f C^{d_n^{\om^\circ}\times d_n^{\om^\circ}}$ and $\bm f\in \f C^{d_n^{\om^\circ}}$. The coefficients are
\[
(A_\nn^{\om^\circ})_{\jj,\ii} = 
\begin{cases}
\sum_{i=1}^d \left[
\frac
{ a_i(x_{\jj + s_i^+\mathbf e_i/2})}
{\frac 12s_i^+(s_i^++s_i^-)h_i^2}+
\frac
{a_i(x_{\jj - s_i^-\mathbf e_i/2})}
{\frac 12s_i^-(s_i^++s_i^-)h_i^2}\right]
+c(x_\jj),
& \ii= \jj,\\
\frac
{-a_i(x_{\jj \pm \mathbf e_i/2})}
{\frac 12(s_i^++s_i^-)h_i^2}
+
\frac{\pm b_i(x_\jj)}{(s_i^++s_i^-)h_i},
& \ii = \jj \pm \bm e_i, s_i^{\pm} = 1\\
0, & \textnormal{otherwise}.
\end{cases}
\]
Notice that one can rewrite the nonzero off-diagonal  coefficients as
\[
(A_\nn^{\om^\circ})_{\jj,\jj \pm e_i} = \frac
{-a_i(x_{\jj \pm\mathbf e_i/2})}
{\frac 12(s_i^++s_i^-)h_i^2}
+
\frac{\pm b_i(x_\jj)}{(s_i^++s_i^-)h_i}
=
\frac{2}{s_i^++s_i^-}
\left(
\frac
{-a_i(x_{\jj \pm\mathbf e_i/2})}
{h_i^2}
+
\frac{\pm b_i(x_\jj)}{2h_i}
\right).
\]

\subsection{Spectral Analysis}

As already noted, if all $s_i^\pm$ are equal to 1, then the relations (\ref{FDsw.jj})-(\ref{FDsw.}) reduces to the classic finite difference scheme (\ref{FD.jj})-(\ref{FD.}), so we may ask how many are the points $x_\jj\in \om^\circ$ such that one of the  $s_i^\pm$ is not equal to 1. By the definition of $s_i^\pm$, this is equivalent to say that the segment $(x_\jj -h_i\bm e_i,x_\jj +h_i\bm e_i)$ does not lie completely inside $\om^\circ$. In the next result, we will prove that given any positive integer number $k$, the number of points $x_\jj \in \Xi_n'$ for which there exists a direction $\bm e_i$ such that $(x_\jj -kh_i\bm e_i,x_\jj +kh_i\bm e_i)$ does not lie completely inside $\om^\circ$ is negligible when compared with the number of points in $\Xi_n'$.

\begin{lemma}\label{near_border_points}
	Let
	\[
	D(n, k):= \set{ x_\jj \in \Xi'_n | \exists i, t\in (-k,k) : x_\jj + th_i\bm e_i \not\in \om^\circ   }.
	\]	
	For every $k>0$, we have
	\[
	\# D(n,k) =  o(N(\nn)).
	\]
\end{lemma}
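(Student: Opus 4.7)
The plan is to reduce this to Corollary \autoref{preliminaries_near_border_points_chi}, by showing that any point counted by $D(n,k)$ must lie close to the boundary $\partial\om$, with ``close'' meaning within a distance that vanishes as $n\to\infty$.

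First I would make the geometric observation: suppose $x_\jj\in\Xi'_n\subseteq\om^\circ$ and suppose there exist $i\in\{1,\ldots,d\}$ and $t\in(-k,k)$ such that $x_\jj+th_i\bm e_i\notin\om^\circ$. Since $\om^\circ$ is open and contains $x_\jj$, by continuity of the straight segment $r\mapsto x_\jj+rh_i\bm e_i$, there exists $t'$ with $|t'|\le|t|<k$ such that $x_\jj+t'h_i\bm e_i\in\partial\om^\circ$. In particular
\[
d(x_\jj,\partial\om^\circ)\le kh_i\le k\max_{j=1,\ldots,d}h_j.
\]
Now $\partial\om^\circ=\ol{\om^\circ}\setminus\om^\circ\subseteq\ol\om\setminus\om^\circ=\partial\om$, so $d(x_\jj,\partial\om)\le d(x_\jj,\partial\om^\circ)$ and therefore $d(x_\jj,\partial\om)\le k\max_j h_j$.

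Next I would package this as an inclusion. Setting $c_n:=k\max_{j=1,\ldots,d}h_j=k\max_{j=1,\ldots,d}\frac{1}{n_j+1}$, we have $c_n\to 0$ since $\nn\to\infty$, and the analysis above gives
\[
D(n,k)\subseteq\Set{x_\jj\in\Xi_n | d(x_\jj,\partial\om)\le c_n} = \Set{\frac{\jj}{\nn+\uu} | \rchi_{K_{c_n}}\!\left(\frac{\jj}{\nn+\uu}\right)=1,\ \uu\le\jj\le\nn},
\]
with $K_{c_n}$ the set introduced before \autoref{preliminaries_near_border_points}. Consequently
\[
\#D(n,k)\le\rk(I_\nn(\rchi_{K_{c_n}}))=d_n^{K_{c_n}}.
\]

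Finally, since $\om$ is Peano-Jordan measurable and $c_n$ is a nonnegative sequence converging to zero, Corollary \autoref{preliminaries_near_border_points_chi} applied to the sequence $c_n$ yields $d_n^{K_{c_n}}=o(N(\nn))$, and the thesis follows. No step is really delicate here; the only point that requires a moment of attention is the containment $\partial\om^\circ\subseteq\partial\om$ (which holds for any set and in particular makes the Lebesgue negligibility of $\partial\om$ transfer to $\partial\om^\circ$), needed to invoke the corollary with respect to the boundary $\partial\om$ used in its statement rather than $\partial\om^\circ$.
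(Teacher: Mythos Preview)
Your proof is correct and follows essentially the same route as the paper: both observe that any $x_\jj\in D(n,k)$ lies within distance $k\max_i h_i$ of $\partial\om$, and then invoke Corollary~\ref{preliminaries_near_border_points_chi} with the null sequence $c_n=k\max_i h_i$. Your argument is in fact slightly more explicit than the paper's, spelling out the intermediate-value step and the containment $\partial\om^\circ\subseteq\partial\om$ that justify landing on $\partial\om$ rather than just outside $\om^\circ$.
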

\begin{proof}
	
	Notice that if $x_\jj\in D(n, k)$, then there exists a direction $\bm e_i$ and a value $t\in (-k,k)$ such that  $x_\jj + th_i\bm e_i\in \partial \om$ and $t\ne 0$. In particular, we infer that $d(x_\jj, \partial \om)<kh_i$ and if we denote $h=\max_i h_i$, then  $d(x_\jj, \partial \om)<kh$. 
	%	Notice that $x_\jj\in\Xi_n'$ if and only if $\rchi_{\om^\circ}(x_\jj) =1$ or also said, $\left [I_\nn(\rchi_{\om^\circ})\right]_{\jj,\jj} =1$, meaning that $d_n^{\om^\circ} = \#\Xi_n'\ge \# D(n,k)$.	
	Using notations and results of \autoref{preliminaries_near_border_points_chi}, we know that $x_\jj\in K_{kh}\cap \Xi_n'$,  
	but $kh\to 0$ as $n$ goes to infinity, so
	\[
	\# D(n,k) \le d_n^{K_{kh}} = o(d_n^{\om^\circ}) \implies 	\# D(n,k) = o(N(\nn)).
	\] 
\end{proof}

We just proved that, except for few relations, the system (\ref{sistema.FD}) mimics a classical FD scheme.
We can thus consider  the extended problem
\begin{equation}\label{problem.dFD.01}
\left\{\begin{array}{ll}
\displaystyle-\sum_{i=1}^d\frac{\partial}{\partial x_i}\Bigl(a'_{i}\frac{\partial u}{\partial x_j}\Bigr)+\sum_{i=1}^db'_i\frac{\partial u}{\partial x_i}+c'u=f', & \mbox{in }(0,1)^d,\\
u=0, & \mbox{on }\partial([0,1]^d).
\end{array}\right. 
\end{equation}
where $a'_{i},b'_i,c',f'$ are functions that extend $a_{i},b_i,c,f$ 
\[
a'_{i}(x)
=
\begin{cases}
a_{i}(x), & x\in \om,\\
0, & x\not\in \om,
\end{cases}\qquad 
b'_{i}(x)
=
\begin{cases}
b_{i}(x), & x\in \om,\\
0, & x\not\in \om,
\end{cases}\]\[ 
c'(x)
=
\begin{cases}
c(x), & x\in \om,\\
0, & x\not\in \om,
\end{cases}\qquad 
f'(x)
=
\begin{cases}
f(x), & x\in \om,\\
0, & x\not\in \om.
\end{cases}
\]
Notice that $b'_i,c'$ are bounded functions since $\om$ is a compact set, and moreover $a'_{i}$ are bounded and continuous a.e. functions. In \cite{perturbation}, it is showed that these conditions on the coefficients are enough to prove that the matrices $A_\nn$ induced by the relations (\ref{FD.jj})-(\ref{FD.}) build a GLT sequence with symbol
\[
\{ n^{-2}A_\nn \}_n\GLT k(x,\theta) =
\sum_{i=1}^{d}\nu_i^2 a'_{i}(x)(2 - 2\cos(\theta_i)),
\]
where $\nn +\uu = n\bm{\nu}$. 
This is also enough to let us conclude that $\{ n^{-2} A_\nn^{\om^\circ} \}_n$ is actually a reduced GLT sequence.
\begin{theorem}
	\[
	\{ n^{-2}A_\nn^{\om^\circ} \}_n\sim_{GLT}^{\om^\circ} \kappa(x,\theta)  = \sum_{i=1}^{d}\nu_i^2 a_{i}(x)(2 - 2\cos(\theta_i)).
	\]
\end{theorem}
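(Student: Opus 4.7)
The strategy is to compare $A_\nn^{\om^\circ}$ with the restriction of a full-domain finite-difference matrix via the operator $R_{\om^\circ}$, and argue that the difference is zero-distributed because it is supported on the thin near-boundary set controlled by \autoref{near_border_points}.

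First, I would introduce the auxiliary matrix $A_\nn\in\mathbb C^{N(\nn)\times N(\nn)}$ obtained by applying the classical central finite-difference scheme \eqref{FD.jj}--\eqref{FD.} on the full grid $\Xi_n$ of $[0,1]^d$, using the extended coefficients $a'_i,b'_i,c'$. Since $a'_i$ is bounded and continuous a.e., and $b'_i,c'$ are bounded, the result of \cite{perturbation} cited just before the theorem gives
\[
\{n^{-2}A_\nn\}_n\GLT k(x,\theta)=\sum_{i=1}^d \nu_i^2 a'_i(x)(2-2\cos\theta_i).
\]
Because $k|_{x\in\om^\circ}=\sum_i\nu_i^2 a_i(x)(2-2\cos\theta_i)=\kappa(x,\theta)$, \autoref{reduced_symbol} yields
\[
R_{\om^\circ}(\{n^{-2}A_\nn\}_n)\GLTom \kappa.
\]

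Next, I would compare $A_\nn^{\om^\circ}$ and $R_{\om^\circ}(A_\nn)$ row by row. Both have size $d_n^{\om^\circ}\times d_n^{\om^\circ}$ and their rows/columns are indexed by $\jj$ with $x_\jj\in\Xi_n'$ via $\phi_\nn$. For any $\jj\notin D(n,2)$, the definition of $D(n,2)$ ensures $x_\jj+t h_i\bm e_i\in\om^\circ$ for all $t\in(-2,2)$; in particular the closed segments $[x_\jj-h_i\bm e_i,x_\jj+h_i\bm e_i]$ lie in $\om^\circ$, so $s_i^\pm(\jj)=1$ for every $i$, the neighbors $x_{\jj\pm\bm e_i}$ belong to $\Xi_n'$, and the half-points $x_{\jj\pm\bm e_i/2}$ lie in $\om^\circ$ (hence $a'_i$ equals $a_i$ there). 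Substituting $s_i^\pm=1$ into the Shortley--Weller formulas \eqref{FDsw.jj}--\eqref{FDsw.} reduces them verbatim to \eqref{FD.jj}--\eqref{FD.}, so the $\jj$-th row of $A_\nn^{\om^\circ}$ coincides with the $\jj$-th row of $R_{\om^\circ}(A_\nn)$. Therefore the difference
\[
E_\nn:=n^{-2}\bigl(A_\nn^{\om^\circ}-R_{\om^\circ}(A_\nn)\bigr)
\]
has nonzero rows only for $\jj\in D(n,2)$.

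By \autoref{near_border_points}, $\#D(n,2)=o(N(\nn))$, and since $\mu(\om)>0$, \autoref{dimension_chi} gives $\rk(E_\nn)\le\#D(n,2)=o(d_n^{\om^\circ})$. Writing the trivial decomposition $E_\nn=E_\nn+0$, property \textbf{Z\,1} (transported to the reduced-size setting) yields $\{E_\nn\}_n\sim_\sigma 0$; by the third item of \textbf{GLT\,3} (reduced version) this means $\{E_\nn\}_n\GLTom 0$. Finally, linearity (\textbf{GLT\,4}) gives
\[
\{n^{-2}A_\nn^{\om^\circ}\}_n=R_{\om^\circ}(\{n^{-2}A_\nn\}_n)+\{E_\nn\}_n\GLTom \kappa+0=\kappa.
\]

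The main delicate step is the coefficient-matching: one must verify that inserting $s_i^\pm=1$ into the diagonal and off-diagonal Shortley--Weller entries recovers exactly the classical second-order central differences for both the diffusion and the convection terms. This is a direct computation, but it requires care with the half-index evaluations $a_i(x_{\jj\pm\bm e_i/2})$ and with the symmetric treatment of the convection weight $b_i(x_\jj)/(2h_i)$. Once this identification is in place, the rest of the argument is purely structural: a rank-negligible perturbation does not affect the reduced GLT symbol.
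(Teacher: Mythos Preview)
Your proposal is correct and follows essentially the same route as the paper: define the full-grid FD matrix $A_\nn$, restrict it via $R_{\om^\circ}$, and show that $n^{-2}(A_\nn^{\om^\circ}-R_{\om^\circ}(A_\nn))$ has nonzero rows only at indices in $D(n,2)$, hence is zero-distributed by \autoref{near_border_points} and \autoref{dimension_chi}. The paper's proof is the same argument with the notation $B_\nn^{\om^\circ}=R_{\om^\circ}(A_\nn)$ and $Z_\nn^{\om^\circ}=B_\nn^{\om^\circ}-A_\nn^{\om^\circ}$, and it displays explicitly the row-entry identity you describe as the ``delicate step.''
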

\begin{proof}
	Denote with $B_\nn^{\om^\circ}$ and $Z^{\om^\circ}_\nn$ the matrices
	\[
	B_\nn^{\om^\circ} = R_{\om^\circ}(A_\nn),
	\qquad Z^{\om^\circ}_\nn = B_\nn^{\om^\circ} - A_\nn^{\om^\circ},
	\]  
	where the rows and columns are associated to the points $x_\jj\in \Xi'_n$. If  $x_\jj\in \Xi_n'\setminus D(n,2)$, then $x_\jj$  is a point of the grid $\Xi_n$ inside $\om^\circ$  such that all its neighbours  still belong to $\om^\circ$. In this case, 
	\[
	(A_\nn^{\om^\circ})_{\jj,\ii} = (B_\nn^{\om^\circ})_{\jj,\ii} =(A_\nn)_{\jj,\ii} = \begin{cases}
	c(x_\jj) + \sum_{i=1}^d\frac{ a_i(x_{\jj+\bm e_i/2}) + a_i(x_{\jj-\bm e_i/2}) }{h_i^2} & \ii = \jj,\\
	-\frac{a_i(x_{\jj\pm \bm e_i/2})}{h_i^2} \pm \frac{b_i(x_\jj)}{2h_i} & \ii = \jj \pm \bm e_i,\\
	0, & \textnormal{otherwise},
	\end{cases} 
	\]
	hence the row corresponding to $x_\jj$ in $Z^{\om^\circ}_\nn$ is zero. Using the result in \autoref{near_border_points}, we  conclude that the number of non-zero rows in $Z^{\om^\circ}_\nn$ is $o(N(\nn))$, so $\ms{Z^{\om^\circ}}$ is a zero-distributed sequence, since \autoref{dimension_chi} assures us that
	\[
	\rk(Z^{\om^\circ}_\nn) = o(N(\nn)) \implies \rk(Z^{\om^\circ}_\nn)= o(d_n^{\om^\circ}).
	\] 
	From \textbf{GLT 3} and \textbf{GLT 4}, we conclude that
	\[
	\{n^{-2}Z_\nn  \}_n
	\sim_{GLT}^{\om^\circ} 0, \qquad
	\{ n^{-2}B_\nn^{\om^\circ} \}_n = 
	R_{\om^\circ}(\{ n^{-2}A_\nn \}_n)
	\sim_{GLT}^{\om^\circ} \kappa \]\[
	\implies 
	\{ n^{-2}A_\nn^{\om^\circ} \}_n = \{n^{-2}Z_\nn  \}_n +  \{ n^{-2}B_\nn^{\om^\circ} \}_n\sim_{GLT}^{\om^\circ} \kappa.
	\]
\end{proof}
A more involved analysis is needed to conclude that $\{ n^{-2}A_\nn^{\om^\circ} \}_n\sim_\lambda \kappa$. If $A_\nn^{\om^\circ}$ were Hermitian matrices, the result would follow from \textbf{GLT 1}, but it is almost never the case. Notice that $\kappa$ is a real valued function, so we can decompose $A_\nn^{\om^\circ}$ into its Hermitian and skew-Hermitian part. Using \textbf{GLT 1, 4}, we have
\[
\Re(n^{-2}A_\nn^{\om^\circ}) = \frac{n^{-2}A_\nn^{\om^\circ} + n^{-2}(A_\nn^{\om^\circ})^H}{2} 
\implies
\{\Re(n^{-2}A_\nn^{\om^\circ})  \}_n \sim_{GLT}^{\om^\circ} \kappa, \qquad \{\Re(n^{-2}A_\nn^{\om^\circ})  \}_n \sim_\lambda \kappa.
\]
On the other hand,  the skew-Hermitian part is zero-distributed, but in order to write the expression for its coefficients, we need to remind that the values $s_i^\pm$ depend on the point $x_\jj$. To avoid confusion, in this case we will denote them by $s_i^\pm(\jj)$.
\[
\Im(n^{-2}A_\nn^{\om^\circ}) = \frac{n^{-2}A_\nn^{\om^\circ} - (n^{-2}A_\nn^{\om^\circ})^*}{2} 
\implies 
\{n^{-2}\Im(A_\nn^{\om^\circ})  \}_n \sim_{GLT}^{\om^\circ} 0
\]
\[
(\Im(n^{-2}A_\nn^{\om^\circ}) )_{\jj,\ii} = 
\begin{cases}
\frac{n^{-2}}{1+s_i^-(\jj)}
\left(
\frac
{-a_i(x_{\jj +\mathbf e_i/2})}
{h_i^2}
+
\frac{b_i(x_\jj)}{2h_i}
\right)
-
\frac{n^{-2}}{s_i^+(\ii)+1}
\left(
\frac
{-a_i(x_{\ii -\mathbf e_i/2})}
{h_i^2}
+
\frac{- b_i(x_\ii)}{2h_i}
\right),
& \ii = \jj +\bm e_i,\\
\frac{n^{-2}}{s_i^+(\jj)+1}
\left(
\frac
{-a_i(x_{\jj- \mathbf e_i/2})}
{h_i^2}
+
\frac{ -b_i(x_\jj)}{2h_i}
\right)
-
\frac{n^{-2}}{1+s_i^-(\ii)}
\left(
\frac
{-a_i(x_{\ii +\mathbf e_i/2})}
{h_i^2}
+
\frac{b_i(x_\ii)}{2h_i}
\right),
& \ii = \jj -\bm e_i,\\
0, & \textnormal{otherwise}.
\end{cases}
\]
%TODO uniformare la notazione tra * e H per Hermitian 
Notice that $s_i^\pm \in (0,1]$, so we can bound every entry by 
\begin{equation}\label{elements_near_border}
\left|(\Im(n^{-2}A_\nn^{\om^\circ}) )_{\jj,\ii} \right|
\le \nu(2\nu\|a\|_\infty + n^{-1}\|b\|_\infty),
\end{equation}
where $\nu = \max_i\nu_i$.
Moreover, suppose $x_\jj$ is a grid point in $\Xi_n' \setminus D(n,3)$. In particular, we have $s_i^\pm(\jj) = s_i^\pm(\jj + e_i) = s_i^\pm(\jj - e_i) = 1$ for every $i$. In this case, the row $\jj$ is easier to write
\[
(\Im(n^{-2}A_\nn^{\om^\circ}) )_{\jj,\ii} = 
\begin{cases}
\frac{n^{-2}}{h_i}
\left(
\frac{b_i(x_\jj)+ b_i(x_\ii)}{4}
\right),
& \ii = \jj + \bm e_i,\\
-\frac{n^{-2}}{h_i}
\left(
\frac{b_i(x_\jj)+ b_i(x_\ii)}{4}
\right),
& \ii = \jj - \bm e_i,\\
0, & \textnormal{otherwise},
\end{cases}
\]
and we can bound the entries by 
\begin{equation}\label{elements_far_from_border}
\left|(\Im(n^{-2}A_\nn^{\om^\circ}) )_{\jj,\ii} \right|
\le 2\nu n^{-1} \|b\|_\infty.
\end{equation}
\autoref{near_border_points} assures us that almost all points in $\Xi'_n$ respect these conditions.
Now we are ready to prove that $\{ n^{-2}A_\nn^{\om^\circ} \}_n\sim_\lambda \kappa$.

\begin{theorem}\[
	\{ n^{-2}A_\nn^{\om^\circ} \}_n\sim_\lambda \kappa(x,\theta)  = \sum_{i=1}^{d}\nu_i^2a_{i}(x)(2 - 2\cos(\theta_i)).\]
\end{theorem}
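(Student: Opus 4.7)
The plan is to invoke the reduced GLT axiom \textbf{GLT\,2} proved earlier in the paper. We already have $\{n^{-2}A_\nn^{\om^\circ}\}_n \GLTom \kappa$, so it is enough to decompose $n^{-2}A_\nn^{\om^\circ} = X_\nn + Y_\nn$ with $X_\nn$ Hermitian and $(d_n^{\om^\circ})^{-1}\|Y_\nn\|_2^2 \to 0$. The natural choice, as already highlighted in the discussion preceding the statement, is $X_\nn = \Re(n^{-2}A_\nn^{\om^\circ})$ (the Hermitian part) and $Y_\nn = \Im(n^{-2}A_\nn^{\om^\circ})$ (the skew-Hermitian part). So the entire task reduces to estimating the Frobenius norm of $Y_\nn$.

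To control $\|Y_\nn\|_2^2$ I would split the row index set $\Xi_n'$ into $\Xi_n' \cap D(n,3)$ and $\Xi_n' \setminus D(n,3)$, exploiting the two different pointwise bounds already available. Each row of $Y_\nn$ has at most $2d$ nonzero entries (only the indices $\jj \pm \bm e_i$). For rows indexed by $x_\jj \in \Xi_n' \setminus D(n,3)$, the bound \eqref{elements_far_from_border} gives each entry size $O(n^{-1})$, so each such row contributes $O(n^{-2})$ and the total contribution is at most $O(N(\nn)\, n^{-2}) = O(n^{d-2})$. For rows indexed by $x_\jj \in D(n,3)$, the bound \eqref{elements_near_border} gives entries only $O(1)$, hence each row contributes $O(1)$, but by \autoref{near_border_points} there are only $\#D(n,3) = o(N(\nn))$ such rows, so this block contributes $o(N(\nn))$.

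Combining, $\|Y_\nn\|_2^2 = O(n^{d-2}) + o(N(\nn)) = o(N(\nn))$. Since $\om$ is Peano--Jordan measurable with positive measure, \autoref{dimension_chi} gives $d_n^{\om^\circ}/N(\nn) \to \mu(\om^\circ) = \mu(\om) > 0$, so $(d_n^{\om^\circ})^{-1}\|Y_\nn\|_2^2 \to 0$. Together with the Hermitianity of $X_\nn$ and the reduced GLT relation $\{n^{-2}A_\nn^{\om^\circ}\}_n \GLTom \kappa$ already established, \textbf{GLT\,2} of the reduced GLT theorem delivers the spectral distribution $\{n^{-2}A_\nn^{\om^\circ}\}_n \sim_\lambda \kappa$.

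No genuinely hard step is involved; the only delicate point is the bookkeeping between two regimes of entries, and it is essential that the sparsity pattern (at most $2d$ non-zeros per row) be used before applying the pointwise estimates. The role of \autoref{near_border_points} is precisely to guarantee that the $O(1)$ contributions from the skew-Hermitian perturbation due to boundary points $x_\jj$ with some $s_i^\pm(\jj) \neq 1$ are confined to a negligible fraction of rows, so they are absorbed into the $o(N(\nn))$ term without needing any extra cancellation.
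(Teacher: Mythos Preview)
Your proposal is correct and follows essentially the same approach as the paper: decompose into Hermitian and skew-Hermitian parts, split the Frobenius-norm sum of $\Im(n^{-2}A_\nn^{\om^\circ})$ over $\Xi_n'\setminus D(n,3)$ and $D(n,3)$, apply the two entrywise bounds together with the $2d$-sparsity of each row, invoke \autoref{near_border_points} for the $o$-term, and conclude via the reduced \textbf{GLT\,2}. The only cosmetic difference is that the paper writes the final bound directly as $o(d_n^{\om^\circ})$, whereas you pass through $o(N(\nn))$ and then use \autoref{dimension_chi}; the content is identical.
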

\begin{proof}
	Using the decomposition into Hermitian and skew-Hermitian part, we write
	\[
	n^{-2}A_\nn^{\om^\circ} =
	\Re(n^{-2}A_\nn^{\om^\circ}) +
	\Im(n^{-2}A_\nn^{\om^\circ})
	\]
	where $\Re(n^{-2}A_\nn^{\om^\circ})$ are Hermitian and $\{\Re(n^{-2}A_\nn^{\om^\circ})  \}_n \sim_\lambda \kappa$.
	Notice that every row of $\Im(A_\nn)$ has at most $2d$ non-zero elements. Using  \autoref{near_border_points} and the relations (\ref{elements_near_border},\ref{elements_far_from_border}) we obtain
	\begin{align*}
	\|\Im(n^{-2}A_\nn^{\om^\circ})\|_2^2 &= \sum_{\jj}\sum_{\ii} |(\Im(n^{-2}A_\nn^{\om^\circ}))_{\jj,\ii}|^2 \\
	& =
	\sum_{\jj: x_\jj \in \Xi_n' \setminus D(n,3)}\sum_{\ii} |(\Im(n^{-2}A_\nn^{\om^\circ}) )_{\jj,\ii}|^2
	+
	\sum_{\jj: x_\jj \in  D(n,3)}\sum_{\ii} |(\Im(n^{-2}A_\nn^{\om^\circ}) )_{\jj,\ii}|^2\\
	&\le
	\sum_{\jj: x_\jj \in \Xi_n' \setminus D(n,3)}\sum_{\ii}  4\nu^2 n^{-2} \|b\|^2_\infty
	+
	\sum_{\jj: x_\jj \in  D(n,3)}\sum_{\ii} \nu^2(2\nu\|a\|_\infty + n^{-1}\|b\|_\infty)^2
	\\
	&\le
	\sum_{\jj: x_\jj \in \Xi_n' \setminus D(n,3)}  8d\nu^2 n^{-2} \|b\|^2_\infty
	+
	\sum_{\jj: x_\jj \in  D(n,3)} 2d\nu^2(2\nu\|a\|_\infty + n^{-1}\|b\|_\infty)^2
	\\
	&\le
	8d\nu^2 n^{-2} \|b\|^2_\infty d_n^{\om^\circ}
	+
	2d\nu^2(2\nu\|a\|_\infty + n^{-1}\|b\|_\infty)^2 o(d_n^{\om^\circ})
	= o(d_n^{\om^\circ}).
	\end{align*}
	\textbf{GLT 2} let us conclude that 
	\[
	\{ n^{-2}A_\nn^{\om^\circ} \}_n\sim_\lambda \kappa.
	\]
	
\end{proof}

\begin{remark}
	The Shortley-Weller approximation just described is actually so general it comprehends classical finite differences methods used on regular domains. For example, in 2 dimensions, every triangular domain can be transformed by affine maps into the isosceles right triangle $T$ described by the vertices with coordinates $(0,0),(0,1),(1,0)$. If we superimpose the regular grid $\Xi_n$ onto the triangle, we find that the union of the points on the border  for every $n$ is a dense set in $\delta T$.

	\begin{figure}[H]
		\centering
		\begin{tikzpicture}
		\draw[-] (0,0) -- (5,0) node[right] {$(1,0)$} -- (0,5) node[left] {$(0,1)$} -- (0,0)  node[below] {$(0,0)$};
		\foreach \x in {0,...,10}{
			\foreach \y in {0,...,10}{
				\ifthenelse{\cnttest{\x+\y}<{10} \AND \NOT \x =0 \AND \NOT \y = 0}
				{\node[circle,fill=black,scale=.3] (a) at (\x/2,\y/2) {};}
				{\ifthenelse{ \x =0 \OR \y = 0 \OR \cnttest{\x+\y}={10}}
					{\node[circle,fill=red,scale=.3](a) at (\x/2,\y/2) {};}
					{\node[circle,fill=black,fill opacity=.2,scale=.3](a) at (\x/2,\y/2) {};}	
		}}}  
		\end{tikzpicture}
		\label{regular_grid_Triangle_T}
		\caption{Superimposition of $\Xi_n$ onto the triangle $T$.}
	\end{figure}
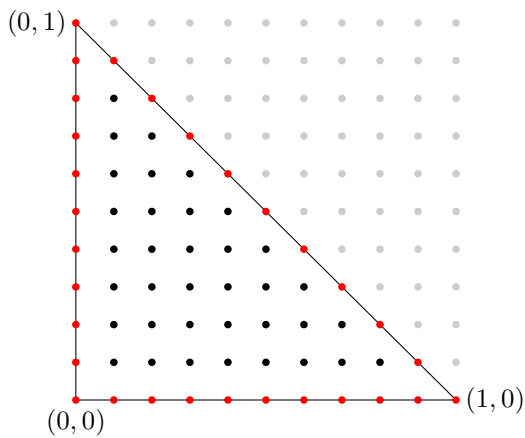

	Operating a classical second order method to discretize Problem \ref{problem.dFD} in 2 dimensions, we fall again in the Shortley-Weller method, so we already know the symbol of the resulting linear system.
	
\end{remark}

\section{$P_1$ Method}\label{P1}

Consider a linear  partial differential equation
\[
\mc L(u)(x) = f(x) \qquad x\in \om^\circ
\]
equipped with some boundary conditions (Dirichlet, Neumann, etc.) when $x\in\partial\om$, where  $\om\cu [0,1]^d$ is a  closed Peano-Jordan measurable set with positive measure and  $f$ is a function defined over $\om$.

A common way to discretize the problem is to use a finite elements method, that is based on the choice of a basis for the functions on the domain $\om$. The basis does not necessarily depend on a grid of points inside $\om$, but usually they do, so on a generic $\om$ there's again the problem to describe the boundary. 
For this reason, usually the domains are polyhedral or with a regular enough boundary. When we deal with more general shapes, we may need to map the domain into a regular one, or to modify the grids of discretization, and a more involved analysis is required.

Let us consider the problem 
\begin{equation}\label{problem.dFE.om}
\left\{\begin{array}{ll}
\displaystyle-\sum_{i,j=1}^2\frac{\partial}{\partial x_j}\Bigl(a_{i,j}\frac{\partial u}{\partial x_i}\Bigr)+\sum_{i=1}^2b_i\frac{\partial u}{\partial x_i}+cu=f, & \mbox{in }\om^\circ,\\
u=0, & \mbox{on }\partial\om,
\end{array}\right.
\end{equation}
where $\om$ is a closed set inside $[0,1]^2$ with negligible boundary and positive measure. Moreover 
$a_{i,j}$, $b_i$, $c$ and $f$ are given complex-valued continuous functions defined on $\om$ 
and $a_{i,j}\in C^1(\om)$. If $A=(a_{i,j})_{i,j=1}^2$ is a matrix of functions and $\bm b = (b_1,b_2)^T$, then the equivalent weak form of (\ref{problem.dFE.om}) reads as
\begin{equation}\label{weak.problem.dFE.triangle}
\displaystyle
\int_{\om^\circ} (\nabla u)^TA\nabla w + (\nabla u)^T\bm b w + cuw  = \int_{\om^\circ} fw,\qquad \forall w\in H_0^1(\om)
.
\end{equation}

The space $[0,1]^2$ is divided into triangles as shown in Figure \ref{general_domain_FE}, whose vertices are the nodes of $\Xi_n$. The $P_1$ finite elements method, studied in \cite{Beck,tyr}, uses base functions supported on the grid triangles that fall inside $\om$. We say that the adjacent nodes of a point $p\in\Xi_n$ are its neighbours, and  we call $N(p)$ the set composed of $p$ and its neighbours. 
Each point $p$ is a vertex for at most 6 triangles, that we call $T_{i,p}$ as shown in Figure \ref{near_point_triangles}, and we denote their union as $T_p$ (notice that they depend also on $n$, but for brevity we omit the index). The collection of all the triangles in the scheme associated to the grid $\Xi_n$ is 
\[\mc T_n = 
\Set{T_{i,p} | p\in \Xi_n, \; i=1,\dots,6}.
\]
For every  point $p\in \Xi_n$ such that $T_p\cu [0,1]^2$, we define a function $\psi_{p,n}$ that is linear on each triangle, whose value is $1$ at $p$ and $0$ on every other point of $\Xi_n$.
%, Moreover, we call $N(p)$ the set composed of $p$ and all the points connected to $p$ that fall in $T^\circ$
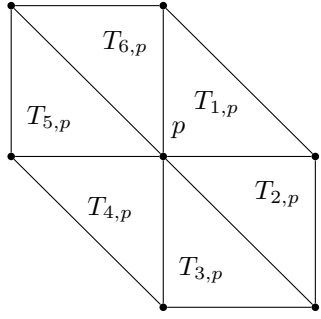
\begin{figure}[H]
	\centering
	\begin{tikzpicture}
	\node[circle,fill=black,scale=.3] (a) at (0,0) {};
	\node[label={[shift={(.2,0)}]$p$}] () at (0,0) {};
	
	\node[circle,fill=black,scale=.3] (a1) at (2,0) {};
	\draw[-] (a1) -- (a);
	\node[circle,fill=black,scale=.3] (a2) at (0,2) {};
	\draw[-] (a2) -- (a);
	\node[circle,fill=black,scale=.3] (a3) at (-2,2) {};
	\draw[-] (a3) -- (a);
	\node[circle,fill=black,scale=.3] (a4) at (-2,0) {};
	\draw[-] (a4) -- (a);
	\node[circle,fill=black,scale=.3] (a5) at (0,-2) {};
	\draw[-] (a5) -- (a);
	\node[circle,fill=black,scale=.3] (a6) at (2,-2) {};
	\draw[-] (a6) -- (a);
	\draw[-] (a1)--(a2)--(a3)--(a4)--(a5)--(a6)--(a1);
	
	\node[] () at (.7,.7) {$T_{1,p}$};
	\node[] () at (1.5,-.5) {$T_{2,p}$};
	\node[] () at (.5,-1.5) {$T_{3,p}$};
	\node[] () at (-.7,-.7) {$T_{4,p}$};
	\node[] () at (-.5,1.5) {$T_{6,p}$};
	\node[] () at (-1.5,.5) {$T_{5,p}$};
	\end{tikzpicture}
	\caption{triangles and neighbours associated to the point $p$}
	\label{near_point_triangles}
\end{figure}
We can explicitly write $\psi_{p,n}$ and its partial derivatives. If $p=(x_p,y_p)$ and $\wt x = x-x_p$, $\wt y = y-y_p$, then
\[
\psi_{p,n}(x,y)=
\begin{cases}
1-\frac{\wt x+\wt y}{h}, & ( x, y)\in T_{1,p},\\
1-\frac{\wt x}{h}, & (x, y)\in T_{2,p},\\
1+\frac{\wt y}{h} , & ( x, y)\in T_{3,p},\\
1+\frac{\wt x+\wt y}{h} , & ( x, y)\in T_{4,p},\\
1+\frac{\wt x}{h} , & ( x, y)\in T_{5,p},\\
1-\frac{\wt y}{h} , & ( x, y)\in T_{6,p},\\
0 , & \textnormal{otherwise},
\end{cases}
\]\[ 
\frac{\partial}{\partial x}\psi_{p,n}(x,y)=
\begin{cases}
-\frac{1}{h}, & ( x, y)\in T_{1,p},\\
-\frac{1}{h}, & (x, y)\in T_{2,p},\\
0 , & ( x, y)\in T_{3,p},\\
\frac{1}{h} , & ( x, y)\in T_{4,p},\\
\frac{1}{h} , & ( x, y)\in T_{5,p},\\
0, & ( x, y)\in T_{6,p},\\
0 , & \textnormal{otherwise},
\end{cases}
\qquad 
\frac{\partial}{\partial y}\psi_{p,n}(x,y)=
\begin{cases}
-\frac{1}{h}, & ( x, y)\in T_{1,p},\\
0, & (x, y)\in T_{2,p},\\
\frac{1}{h} , & ( x, y)\in T_{3,p},\\
\frac{1}{h} , & ( x, y)\in T_{4,p},\\
0 , & ( x, y)\in T_{5,p},\\
-\frac{1}{h} , & ( x, y)\in T_{6,p},\\
0 , & \textnormal{otherwise},
\end{cases}
\]
where $h = 1/(n+1)$. $P_1$ methods usually arises when the domain is not a square, but it is polyhedral or  regular enough. For example, as we can see in Figure \ref{regular_grid_Triangle_T_FE}, the subdivision scheme adopted has the property to describe also the boundary of the triangle, in opposition to the classical tensor-product hat-functions considered in \cite[Section 7.4]{GLT-bookII}.

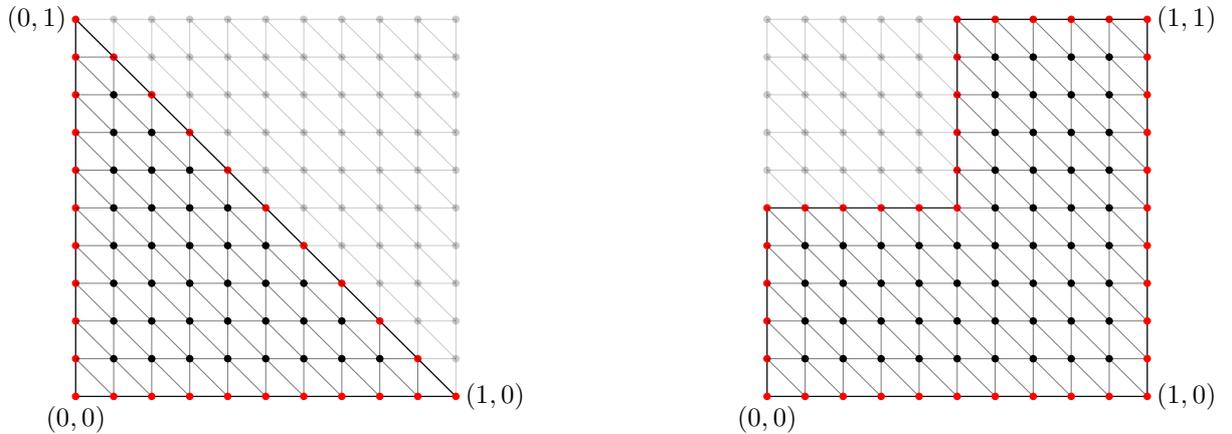
\begin{figure}[H]
	\centering
	\begin{minipage}[b]{0.45\textwidth}
		\centering
		\begin{tikzpicture}
		\draw[-] (0,0) -- (5,0) node[right] {$(1,0)$} -- (0,5) node[left] {$(0,1)$} -- (0,0)  node[below] {$(0,0)$};
		\foreach \x in {0,...,10}{
			\draw[-,opacity = .5] (\x/2,0) -- (\x/2,5-\x/2);
			\draw[-,opacity = .2]  (\x/2,5-\x/2) --
			(\x/2,5);
			\draw[-,opacity = .5] (0,\x/2) -- (5-\x/2,\x/2);
			\draw[-,opacity = .2]  (5-\x/2,\x/2) --
			(5,\x/2);
			\draw[-,opacity = .5] (\x/2,0) -- (0,\x/2);
			\draw[-,opacity = .2] (\x/2,5) -- (5,\x/2);
			\foreach \y in {0,...,10}{
				\ifthenelse{\cnttest{\x+\y}<{10} \AND \NOT \x =0 \AND \NOT \y = 0}
				{\node[circle,fill=black,scale=.3] (a) at (\x/2,\y/2) {};}
				{\ifthenelse{ \x =0 \OR \y = 0 \OR \cnttest{\x+\y}={10}}
					{\node[circle,fill=red,scale=.3](a) at (\x/2,\y/2) {};}
					{\node[circle,fill=black,fill opacity=.2,scale=.3](a) at (\x/2,\y/2) {};}	
		}}}
		\end{tikzpicture}
	\end{minipage}
	\hfill
	\begin{minipage}[b]{0.45\textwidth}
		\centering
		\begin{tikzpicture}
		\draw[-] (0,0) -- (5,0)node[right] {$(1,0)$}  -- (5,5)node[right] {$(1,1)$}  -- (2.5,5) --  (2.5,2.5) -- (0,2.5) --  (0,0)node[below] {$(0,0)$}  ;
		
		\foreach \x in {0,...,10}{
			\ifthenelse{\cnttest{\x}<{5}}
			{\draw[-,opacity = .5] (\x/2,0) -- (\x/2,2.5);
				\draw[-,opacity = .2]  (\x/2,2.5) -- (\x/2,5);
				
				\draw[-,opacity = .5] (0,\x/2) -- (5,\x/2);
				
				\draw[-,opacity = .5] (\x/2 ,0) -- (0,\x/2 );
				\draw[-,opacity = .5] (5,3+\x/2) -- (3+\x/2,5 );
				
				\foreach \y in {0,...,10}{
					\ifthenelse{\cnttest{\y}<{5} \AND \cnttest{\x}>{0} \AND \cnttest{\y}>{0}}
					{\node[circle,fill=black,scale=.3] (a) at (\x/2,\y/2) {};}
					{
						\ifthenelse{\cnttest{\y}>{5}}
						{\node[circle,fill=black,scale=.3,fill opacity = .2] (a) at (\x/2,\y/2) {};}
						{\node[circle,fill=red,scale=.3] (a) at (\x/2,\y/2) {};}
					}
				}
			}
			{\draw[-,opacity = .5]  (\x/2,0) -- (\x/2,5);
				
				\draw[-,opacity = .5] (2.5,\x/2) -- (5,\x/2);
				\draw[-,opacity = .2]  (0,\x/2) -- (2.5,\x/2);
				
				\draw[-,opacity = .5] (\x/2 ,0) -- (\x/2 -2.5,2.5);
				\draw[-,opacity = .2] (\x/2 -2.5,2.5) -- (0,\x/2);
				
				\ifthenelse{\cnttest{\x}<{10}}{
					\draw[-,opacity = .5] (5,5-\x/2) -- (2.5,7.5-\x/2 );
					\draw[-,opacity = .2] (2.5,7.5-\x/2 ) -- (5-\x/2,5);}{}
				\foreach \y in {0,...,10}{
					\ifthenelse{\cnttest{\y}<{10} \AND \cnttest{\x}>{5} \AND \cnttest{\y}>{0}\AND \cnttest{\x}<{10}}
					{\node[circle,fill=black,scale=.3] (a) at (\x/2,\y/2) {};}
					{
						\ifthenelse{\cnttest{\y}>{0}\AND \cnttest{\y}<{5} \AND \x = 5}
						{\node[circle,fill=black,scale=.3] (a) at (\x/2,\y/2) {};}
						{\node[circle,fill=red,scale=.3] (a) at (\x/2,\y/2) {};}
					}
				}
				
			}
			
		}

		%\foreach \x in {0,...,10}{
		%	\draw[-,opacity = .5] (\x/2,0) -- (\x/2,2.5);
		%	\draw[-,opacity = .2]  (\x/2,2.5) -- (\x/2,5);
		%	\draw[-,opacity = .5] (0,\x/2) -- (5-\x/2,\x/2);
		%	\draw[-,opacity = .2]  (5-\x/2,\x/2) --	(5,\x/2);
		%	\draw[-,opacity = .5] (\x/2,0) -- (0,\x/2);
		%	\draw[-,opacity = .2] (\x/2,5) -- (5,\x/2);
		%	\foreach \y in {0,...,10}{
		%		\ifthenelse{\cnttest{\x+\y}<{10} \AND \NOT \x =0 \AND \NOT \y = 0}
		%		{\node[circle,fill=black,scale=.3] (a) at (\x/2,\y/2) {};}
		%		{\ifthenelse{ \x =0 \OR \y = 0 \OR \cnttest{\x+\y}={10}}
		%			{\node[circle,fill=red,scale=.3](a) at (\x/2,\y/2) {};}
		%			{\node[circle,fill=black,fill opacity=.2,scale=.3](a) at (\x/2,\y/2) {};}	
		%}}}
		\end{tikzpicture}
	\end{minipage}
	\caption{Superimposition of $\Xi_n$ onto the triangle $T$ and an $L$ shape. the subdivision mesh is referred to the $P_1$ finite elements method.}
	\label{regular_grid_Triangle_T_FE}
\end{figure}

This does not happen when dealing with more complicated domains $\om$, as shown in \autoref{general_domain_FE}. In fact we can see that, for example, on a curvilinear shape, the points of $\Xi_n$ are not enough to approximate the boundary $\partial\om$.

\begin{figure}[H]
	\centering
	\begin{tikzpicture}
	
	\draw (2.5,5) arc (90:270:2.5cm) -- (5,0) -- (2.5,5);
	
	\node[circle] (a) at (5,0) [right] {$(1,0)$} ;
	\node[circle] (a) at (0,0) [below] {$(0,0)$} ;
	\node[circle] (a) at (0,5) [left] {$(0,1)$} ;
	
	%\draw[-] (0,0) -- (5,0) node[right] {$(1,0)$} -- (0,5) node[left] {$(0,1)$} -- (0,0)  node[below] {$(0,0)$};
	\foreach \x in {0,...,10}{
		\foreach \y in {0,...,10}{
			\ifthenelse{\cnttest{\x}<{5} \OR \cnttest{\x}={5}\AND \cnttest{(\x-5)*(\x-5) + (\y-5)*(\y-5)}<{25}}
			{\node[circle,fill=black,scale=.3] (a) at (\x/2,\y/2) {};}
			{\ifthenelse{ \cnttest{\x}<{5} \OR \cnttest{\x}={5}\AND \cnttest{(\x-5)*(\x-5) + (\y-5)*(\y-5)}={25}}
				{\node[circle,fill=red,scale=.3](a) at (\x/2,\y/2) {};}
				{\ifthenelse{ \cnttest{\x}>{5} \AND \cnttest{\y+2*\x}<{20} \AND \cnttest{\y}>{0} }
					{\node[circle,fill=black,scale=.3] (a) at (\x/2,\y/2) {};}				
					{\ifthenelse{  \cnttest{\y+2*\x}={20} \OR \cnttest{\y}={0} \AND \cnttest{\x}>{5} }
						{\node[circle,fill=red,scale=.3](a) at (\x/2,\y/2) {};}
						{\node[circle,fill=black,fill opacity=.2,scale=.3](a) at (\x/2,\y/2) {};}							
	}}}}}
	
	\foreach \x in {2,...,8}{
		\node[circle,fill=red,scale=.3](a) at (.5,\x/2) {};
	}
	\node[circle,fill=red,scale=.3](a) at (1,4) {};
	\foreach \x in {3,4}{
		\foreach \y in {1,9}{
			\node[circle,fill=red,scale=.3](a) at (\x/2,\y/2) {};
	}}
	\node[circle,fill=red,scale=.3](a) at (2.5,4.5) {};
	\node[circle,fill=red,scale=.3](a) at (3,3.5) {};
	\node[circle,fill=red,scale=.3](a) at (3.5,2.5) {};
	\node[circle,fill=red,scale=.3](a) at (4,1.5) {};
	\node[circle,fill=red,scale=.3](a) at (4.5,.5) {};
	
	\foreach \x in {5,6,...,10}{
		\draw[-,opacity=.2] (\x/2,10-\x)--(\x/2,5);
		\draw[-] (\x /2,0)--(\x/2,10-\x);
	}			
	\draw[-,opacity=.2] (0,0)--(0,5);
	\draw[-,opacity=.2] (.5,0)--(.5,1);
	\draw[-,opacity=.2] (.5,4)--(.5,5);
	
	\draw[-] (.5,1)--(.5,4);
	
	\draw[-] (1,.5)--(.5,1);
	\foreach \x in {2,3,4}{
		\draw[-,opacity=.2] (\x/2,0)--(\x/2,0.5);
		\draw[-,opacity=.2] (\x/2,4.5)--(\x/2,5);
		\draw[-] (\x /2,.5)--(\x/2,4.5);
	}

	\foreach \y in {0,2,...,8,10}{
		\draw[-,opacity=.2] (5-\y/4,\y/2)--(5,\y/2);
		\draw[-] (2.5,\y/2)--(5-\y/4,\y/2);
	}			
	\foreach \y in {1,3,...,7,9}{
		\draw[-,opacity=.2] (5-\y/4 - 1/4,\y/2)--(5,\y/2);
		\draw[-] (2.5,\y/2)--(5-\y/4 - 1/4,\y/2);
	}

	\draw[-,opacity=.2] (0,0)--(2.5,0);
	\draw[-,opacity=.2] (0,5)--(2.5,5);
	\draw[-,opacity=.2] (0,0.5)--(1,.5);
	\draw[-,opacity=.2] (0,4.5)--(1,4.5);
	
	\draw[-] (1,0.5)--(2.5,0.5);
	\draw[-] (1,4.5)--(2.5,4.5);
	\draw[-] (0,2.5)--(2.5,2.5);

	\foreach \y in {2,3,4}{
		\draw[-,opacity=.2] (0,\y/2)--(0.5,\y/2);
		\draw[-,opacity=.2] (0,5-\y/2)--(0.5,5-\y/2);
		\draw[-] (.5,\y/2)--(2.5,\y /2);
		\draw[-] (.5,5-\y/2)--(2.5,5-\y/2);
		
	}

	\draw[-] (1.5,0.5)--(.5,1.5);
	\draw[-] (0,2.5)--(2.5,0);
	
	\draw[-] (5,0)--(1,4);
	\draw[-] (4.5,1)--(1,4.5);
	\draw[-] (4,2)--(1.5,4.5);
	\draw[-] (3.5,3)--(2,4.5);
	\draw[-] (3,4)--(2.5,4.5);
	
	\foreach \x in {0,1,2,3}{
		\draw[-,opacity=.2] (\x/2,0)--(0,\x/2);
	}
	\foreach \x in {1,2,3,4}{
		\draw[-,opacity=.2] (\x/2,5)--(\x/2+.5,4.5);
	}
	\draw[-,opacity=.2] (0,5)--(1,4);
	
	\foreach \y in {4,6,7,8,9}{
		\draw[-,opacity=.2] (0,\y/2)--(0.5,\y/2-.5);
	}	
	\draw[-,opacity=.2] (2,0)--(1.5,.5);
	
	\foreach \y in {1,...,5}{
		\draw[-,opacity=.2] (5,\y/2)--(5-\y/2 ,\y);
	}	
	\foreach \x in {6,7,8,9}{
		\draw[-,opacity=.2] (\x/2,5)--(5,\x /2);
		\draw[-] (\x/2,0)--(0.5,\x/2-.5);
	}
	
	\end{tikzpicture}
	\caption{Example of a general domain $\om$ and induced mesh.}
	\label{general_domain_FE}
\end{figure}
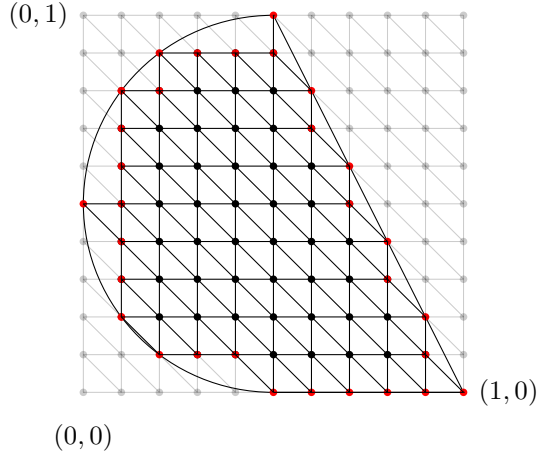

When we work on a closed set $\om\cu [0,1]^2$ with $\mu(\partial \om)=0$, we focus on the points $p$ such that $T_p$ is contained in $\om$, so we call
\[
\Xi_n(\om) := \set{p\in\Xi_n | T_p\cu \om}.
\]
We look for a function $u$ that is a linear combination of the $\psi_{p,n}$ such that (\ref{weak.problem.dFE.triangle}) is satisfied for every $w=\psi_{p,n}$. If we substitute $u=\sum_{p\in \Xi_n(\om)} u_p \psi_{p,n}$ and $w=\psi_{q,n}$ into (\ref{weak.problem.dFE.triangle}), then  we obtain the system
\begin{equation}
\label{FE}
\sum_{p\in \Xi_n(\om)} s_{q,p}u_p=f_q, \qquad 
s_{q,p}=\int_{\om^\circ} (\nabla \psi_{p,n})^TA\nabla \psi_{q,n} + (\nabla \psi_{p,n})^T\bm b \psi_{q,n} + c\psi_{p,n}\psi_{q,n}, \qquad f_q  = \int_{\om^\circ} f\psi_{q,n}
\end{equation}
for every $q\in \Xi_n(\om)$. We call $S_\nn$ the resulting matrix with entries $s_{p,q}$ for every $p,q\in \Xi_n(\om)$, where the nodes are sorted in lexicographic order.  We can notice that $p\in \Xi_n(\om)\implies p\in \om^\circ\cap \Xi_n$, even if the converse is not always true, so 
\[
|\Xi_n(\om)|\le  d_n^{\om^\circ} = O(n^2)
\]
where $|\Xi_n(\om)|$ is the size of the matrix $S_\nn$. It leads to solve the system
%\[
%S_\nn \bm u_n = \bm f_n, \qquad (\bm u_n)_{p} = u_p,\qquad (\bm f_n)_p  =f_p, \quad \forall p\in \Xi_n(\om). 
%\]
\[
S_\nn \bm u = \bm f. 
\]

\begin{remark}
	A different boundary condition does not change the stiffness matrix, so the analysis is the same if we impose, for example, $u = g$ on $T_D$ and $\partial u/\partial n = h$ on $T_N$ where $\partial T = T_D \coprod T_N$. 
\end{remark}

\subsection{Case on the Square}

When $\om =[0,1]^2$, we already know that, under suitable hypotheses on the regularity of the coefficients, the sequence of stiffness matrices $\serie S$ described in (\ref{FE}) is actually a multilevel GLT sequence, for which we can compute GLT and spectral symbol. 
Here we prove that the same holds when $A,\bm b,c$ are just $L^1$ functions.

\begin{theorem}\label{FE_symbol_square}
	We call $B$ the $3\times 2$ matrix
	\[
	B = \begin{pmatrix}
	1 & 1\\
	1 & 0\\
	0 & 1
	\end{pmatrix}
	\]
	and we indicate with $B_1,B_2,B_3$ its rows.
	Given  $L^1$ functions $A:(0,1)^2\to \f C^{2\times 2}$, $\bm b:(0,1)^2 \to \f C^2$ and $c:(0,1)^2 \to \f C$, we have that the sequence $\ms{S}$ is a multilevel GLT sequence with symbol $k(x,\theta)$, where
	\begin{align}\label{k_FE}
	k(x,\theta) =& r_{0,0}(x) +  r_{0,1}(x)\exp(-i\theta_2)
	+  r_{1,0}(x)\exp(-i\theta_1)+  r_{-1,0}(x)\exp(i\theta_1) \\&+  \nonumber r_{0,-1}(x)\exp(i\theta_2)
	+ r_{1,-1}(x)\exp(-i\theta_1+i\theta_2)
	+  r_{-1,1}(x)\exp(i\theta_1 - i\theta_2),
	\end{align}
	\begin{align}
	\label{r_FE}r_{0,0} 
	=
	B_1A(B_1)^T
	+&B_2A(B_2)^T
	+B_3A(B_3)^T
	,\\
	\nonumber r_{0,1} 
	=
	-\frac 12B_3A(B_1)^T
	-\frac 12B_1A(B_3)^T
	,&\qquad
	\nonumber r_{0,-1} =
	-\frac 12B_3A(B_1)^T
	-\frac 12B_1A(B_3)^T
	,\\
	\nonumber r_{1,-1} =
	\frac 12B_2A(B_3)^T
	+\frac 12B_3A(B_2)^T,&\qquad
	r_{-1,1} =
	\frac 12B_2A(B_3)^T
	+\frac 12B_3A(B_2)^T
	,\\
	\nonumber r_{-1,0} = 
	-\frac 12B_1A(B_2)^T
	-\frac 12B_2A(B_1)^T,&\qquad
	r_{1,0} 
	=
	-\frac 12B_1A(B_2)^T
	-\frac 12B_2A(B_1)^T.
	\end{align}
	If $A$ is also Hermitian for every $x\in (0,1)^2$, then the sequence $\ms{ S}$ has  $k(x,\theta)$ as spectral symbol.
\end{theorem}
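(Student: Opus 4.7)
The plan is to split the stiffness matrix as $S_\nn=S_\nn^A+S_\nn^{\bm b}+S_\nn^c$ according to the three summands of the bilinear form in \eqref{FE}, discard the convection and reaction contributions as zero--distributed, and identify the diffusion part with an explicit sum of diagonal--sampling and Toeplitz blocks, first for continuous $A$ and then by $L^1$ approximation. Since $\|\nabla\psi_{p,n}\|_\infty=O(h^{-1})$, $\|\psi_{p,n}\|_\infty\le 1$, $\mu(T_p)=O(h^2)$, and only seven relative shifts $q-p$ yield overlapping supports, one has the entrywise bounds $|(S_\nn^{\bm b})_{p,q}|\le h^{-1}\int_{T_p\cap T_q}|\bm b|$ and $|(S_\nn^c)_{p,q}|\le \int_{T_p\cap T_q}|c|$. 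Using that the Schatten $1$--norm is controlled by the sum of the absolute values of the entries and that every mesh triangle lies in $O(1)$ common supports, one gets $\|S_\nn^{\bm b}\|_1\le Ch^{-1}\|\bm b\|_{L^1}$ and $\|S_\nn^c\|_1\le C\|c\|_{L^1}$; with $N(\nn)\sim h^{-2}$, both $N(\nn)^{-1}\|S_\nn^{\bm b}\|_1$ and $N(\nn)^{-1}\|S_\nn^c\|_1$ vanish, so \textbf{Z 2} gives zero--distributedness and \textbf{GLT 3} identifies both as GLT sequences with symbol $0$.

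For $S_\nn^A$, on each of the six triangles $T_{i,p}$ meeting at an interior node $p$ the derivatives tabulated for $\psi_{p,n}$ show that $\nabla\psi_{p,n}$ is constant and equal to $\pm h^{-1}B_j^T$ for a specific row $B_j$ of $B$; the pairs with overlapping supports are exactly those with $\delta=q-p\in\{\bm 0,\pm\bm e_1,\pm\bm e_2,\pm(\bm e_1-\bm e_2)\}$, since the shifts $\pm(\bm e_1+\bm e_2)$ produce disjoint supports by the direction of the mesh diagonals. On each shared triangle $(\nabla\psi_{p,n})^T A\nabla\psi_{q,n}=h^{-2}(\pm B_i)A(\pm B_j)^T$, and collecting these contributions with triangle area $\tfrac{1}{2}h^2$ matches exactly \eqref{r_FE}: the diagonal is $\sum_j B_jAB_j^T=r_{0,0}$, and each off--diagonal $r_\delta$ is the sum of two symmetric pairings $B_iAB_j^T+B_jAB_i^T$ coming from the two shared triangles. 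Setting $G_\nn=\sum_\delta D_\nn(r_\delta)T_\nn(\mathrm e^{-\mathrm i\delta\cdot\theta})$, by \textbf{GLT 3} and \textbf{GLT 4} one has $\{G_\nn\}_n\GLT k$. For continuous $A$, the uniform estimate $(S_\nn^A)_{p,q}=r_{q-p}(p)+O(\omega_A(h))$, combined with $O(1)$ sparsity per row, gives $\|S_\nn^A-G_\nn\|\to 0$, so by \textbf{Z 1} and \textbf{GLT 4} we conclude $\{S_\nn^A\}_n\GLT k$. For general $A\in L^1$, approximate by continuous $A_m$ with $\|A-A_m\|_{L^1}\to 0$: the same entrywise comparison yields $\|S_\nn^A-S_\nn^{A_m}\|_1\le C\,N(\nn)\,\|A-A_m\|_{L^1}$, so \textbf{ACS 4} with $p=1$ gives $\{S_\nn^{A_m}\}_n\acs\{S_\nn^A\}_n$; the associated symbols $k_m$ converge to $k$ in measure because $r_{i,j}^{(m)}\to r_{i,j}$ in $L^1$, and \textbf{GLT 7} closes $\{S_\nn^A\}_n\GLT k$. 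Combining with the zero--distributed lower--order parts via \textbf{GLT 4} gives $\{S_\nn\}_n\GLT k$, hence $\sim_\sigma k$ by \textbf{GLT 1}.

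For the spectral claim when $A=A^*$, each $S_\nn^A$ is Hermitian; the scalars $B_iAB_j^T+B_jAB_i^T$ are real (since the $B_i$ are real and $\overline{B_iAB_j^T}=B_iA^TB_j^T=(B_jAB_i^T)^T=B_jAB_i^T$), so the coefficients $r_{i,j}$ are real and $k$ is real valued. One then applies \textbf{GLT 2} with $X_\nn=S_\nn^A$ Hermitian and $Y_\nn=S_\nn^{\bm b}+S_\nn^c$: the estimate $\|Y_\nn\|_2^2=o(N(\nn))$ follows, for continuous $\bm b$ and $c$, from the Cauchy--Schwarz bound $|(S_\nn^{\bm b})_{p,q}|^2\le\int_{T_p\cap T_q}|\bm b|^2$ and its analogue for $c$ (giving $\|Y_\nn\|_2^2=O(\|\bm b\|_{L^2}^2+\|c\|_{L^2}^2)=O(1)$), and extends to general $L^1$ coefficients by density combined with \textbf{ACS 1}. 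The hard part is the combinatorial step: for each of the seven admissible shifts $\delta$ one must identify the shared triangles, determine which rows of $B$ appear as gradients there with which signs, and verify that the symmetric combinations \eqref{r_FE} emerge after integration; the remaining reductions via \textbf{ACS 4} and \textbf{GLT 7} are routine.
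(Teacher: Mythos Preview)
Your overall strategy---split off the convection/reaction parts as zero--distributed, compare the diffusion part with an explicit $\sum_\delta D_\nn(r_\delta)T_\nn(e^{-\mathrm i\delta\cdot\theta})$ first for continuous $A$ and then pass to $L^1$ via \textbf{ACS\,4}---is exactly the paper's, and the variants you use (Schatten $1$--norm with \textbf{Z\,2} for the lower--order terms, operator norm for the continuous comparison) are sound alternatives to the paper's Frobenius--norm estimates.

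There is, however, a genuine gap in the spectral part. To invoke \textbf{GLT\,2} you need $N(\nn)^{-1}\|Y_\nn\|_2^2\to 0$ for the \emph{actual} $L^1$ coefficients $\bm b,c$, and your proposed extension ``by density combined with \textbf{ACS\,1}'' does not deliver this. \textbf{ACS\,1} concerns singular--value symbols only; the spectral analogue \textbf{ACS\,2} requires Hermitian matrices, which your $S_\nn$ are not. And the Frobenius bound itself does not follow from the continuous case by $L^1$ density: knowing $\|Y_\nn^{(m)}\|_2^2=O(1)$ and $\|Y_\nn-Y_\nn^{(m)}\|_1\le C\,N(\nn)\,\|(\bm b,c)-(\bm b_m,c_m)\|_{L^1}$ says nothing about $\|Y_\nn-Y_\nn^{(m)}\|_2$. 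The paper avoids this by establishing the Frobenius estimate \emph{directly} for $L^1$ data via absolute continuity of the integral: given $\varepsilon>0$ choose $\delta>0$ so that $\mu(U)\le\delta$ implies $\int_U(|b_1|+|b_2|+|c|)\le\varepsilon$; once $3h^2\le\delta$ one bounds $\bigl(\int_{T_p}\cdots\bigr)^2\le\varepsilon\int_{T_p}\cdots$, sums over $p$, and obtains $\|Y_\nn\|_2^2\le C\varepsilon h^{-2}$, hence $N(\nn)^{-1}\|Y_\nn\|_2^2\le C\varepsilon$ for all $\varepsilon$. You should replace your density argument by this absolute--continuity step (or by any other direct $L^1$ Frobenius estimate).
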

\begin{proof}
	We split the matrix $ S_\nn$ into $ P_\nn +  Z_\nn$, where 
	\[
	( P_\nn)_{p,q} = \int_{(0,1)^2} (\nabla \psi_{p,n})^TA\nabla \psi_{q,n}, \qquad  ( Z_\nn)_{p,q} = \int_{(0,1)^2}  (\nabla \psi_{p,n})^T\bm b \psi_{q,n} + c\psi_{p,n}\psi_{q,n}
	\] 
	and we prove that $\ms{ P}\GLT k(x,\theta)$ and $\ms{ Z}$ is zero distributed. \\
	
	Notice that $\psi_{p}$ is supported on $T_p$, so $ ( S_\nn)_{p,q}, ( P_\nn)_{p,q}, ( Z_\nn)_{p,q}$ are different from zero only when $q$ is one of the 6 neighbours of $p$ or $p$ itself, that is $q\in N(p)$. Moreover, every $\psi_{p,n}$ is nonnegative and less than 1, and each component of $\nabla \psi_{p,n}$ is bounded by $1/h$ in absolute value.

	Notice that the area of $T_{p}$ is $3h^2$ for every $p$. Moreover, the functions $b_1,b_2,c$ are $L^1$, so for every $\ve>0$ there exists a $\delta>0$ such that 
	\[
	\mu(U)\le \delta\implies \int_U |b_1|+|b_2|+|c|\le \ve.
	\]
	Notice that every triangle $T_{(i)}$ of the triangulation $\mc T_n$ is inside $T_p$ for at most 3 different points $p$, that are its vertices, and if $3h^2\le \delta$,  we get\\
	\begin{align}\label{FE_Z}
	\nonumber	\| Z_\nn\|_2^2  &=  
	\sum_{p,q\in (0,1)^2\cap \Xi_n} |( Z_\nn)_{p,q}|^2 	
	=
	\sum_{p,q\in (0,1)^2\cap \Xi_n} \left|\int_{(0,1)^2}  (\nabla \psi_{p,n})^T\bm b \psi_{q,n} + c\psi_{p,n}\psi_{q,n}\right|^2 \\
	&\nonumber\le
	\sum_{p\in (0,1)^2\cap \Xi_n} 
	\sum_{q\in N(p)} \left[\int_{T_{p}}  |(\nabla \psi_{p,n})^T\bm b| \psi_{q,n} + |c|\psi_{p,n}\psi_{q,n}\right]^2
	\\
	&\nonumber\le
	\sum_{p\in (0,1)^2\cap \Xi_n} 
	\sum_{q\in N(p)} \left[\int_{T_{p}}  \frac {|b_1| + |b_2|}h+ |c| \right]^2 \le 
	\frac 1{h^2}\sum_{p\in (0,1)^2\cap \Xi_n} 
	\sum_{q\in N(p)} \left[\int_{T_{p}}  |b_1| + |b_2|+ |c| \right]^2
	\\
	&	\nonumber\le 
	\frac 7{h^2}\sum_{p\in (0,1)^2\cap \Xi_n} 
	\left[\int_{T_{p}}  |b_1| + |b_2|+ |c| \right]\ve\\
	&\nonumber\le 
	\frac 7{h^2}\ve \sum_{T_{(i)}\in \mc T_n} 
	3\left[\int_{T_{(i)}}  |b_1| + |b_2|+ |c| \right]\\&
	\le 21 \frac{\ve}{h^2}( \|b_1\|_1 + \|b_2\|_1 +\|c\|_1 ).
	\end{align}
	Since we can take $\ve$ arbitrarily small as $n$ tends to infinity, we infer that  $n^{-1}\| Z_\nn\|_2\to 0$, so we can use \textbf{Z2} and conclude that $\ms{ Z}$ is zero-distributed.\\
	
	Let us analyse now the matrix $ P_\nn$.
	%	Let $z_{i,j}$ be the point of $\Xi_n$ of coordinates $(ih,jh)$.
	The elements of $ P_\nn$ on the row associated to $p=x_\jj$ are different from zero only when $q\in N(p)$. 
	Call $t_{p,a,b} = (P_\nn)_{p,p+a\bm e_1 + b\bm e_2}$, and a computation shows that 
	%	 the elements of $ P_\nn$ on the row associated to $p=z_{i,j}$ are different from zero only when $q\in N(p)$.  call $t_{p,a,b} =  P_{z_{i,j},z_{i+a,j+b}}$, and a computation shows that 
	\begin{align*}
	h^2t_{p,0,0} &
	=
	\int_{T_{1,p}\cup T_{4,p}}B_1A(B_1)^T
	+\int_{T_{2,p}\cup T_{5,p}}B_2A(B_2)^T
	+\int_{T_{3,p}\cup T_{6,p}}B_3A(B_3)^T
	,\\
	h^2t_{p,0,1} &
	=
	-\int_{T_{6,p}}B_3A(B_1)^T
	-\int_{T_{1,p}}B_1A(B_3)^T
	,\\
	h^2t_{p,1,0} &
	=
	-\int_{T_{1,p}}B_1A(B_2)^T
	-\int_{T_{2,p}}B_2A(B_1)^T,\\
	h^2t_{p,1,-1} &=
	\int_{T_{2,p}}B_2A(B_3)^T
	+\int_{T_{3,p}}B_3A(B_2)^T,\\
	h^2t_{p,0,-1} &=
	-\int_{T_{3,p}}B_3A(B_1)^T
	-\int_{T_{4,p}}B_1A(B_3)^T,\\
	h^2t_{p,-1,0} &= 
	-\int_{T_{4,p}}B_1A(B_2)^T
	-\int_{T_{5,p}}B_2A(B_1)^T,\\
	h^2t_{p,-1,1} &=
	\int_{T_{5,p}}B_2A(B_3)^T
	+\int_{T_{6,p}}B_3A(B_2)^T,
	\end{align*}
	and $t_{p,a,b} = 0$ for every other $a,b$.\\
	
	Assume that $A$ is a continuous function, so that there exists a modulus of continuity $\omega_A$ defined as
	\[
	\omega_A(\delta) = \max_{i,j}\sup_{p,q:|p-q|\le \delta}
	\left| (
	A(p) - A(q))_{i,j}
	\right| 
	\] 
	and such that $\lim_{\delta\to 0} \omega_A(\delta) = 0$. Let us define a 2-level GLT sequence $\serie{ G}$ as 
	\begin{align*}
	G_\nn &= D_\nn(r_{0,0})T_\nn(1) +  D_\nn(r_{0,1})T_\nn(\exp(-i\theta_2))
	+  D_\nn(r_{1,0})T_\nn(\exp(-i\theta_1)) +  D_\nn(r_{-1,0})T_\nn(\exp(i\theta_1)) \\&
	+  D_\nn(r_{0,-1})T_\nn(\exp(i\theta_2))
	+D_\nn(r_{1,-1})T_\nn(\exp(-i\theta_1+i\theta_2))
	+  D_\nn(r_{-1,1})T_\nn(\exp(i\theta_1 - i\theta_2)),
	\end{align*}
	with symbol $k(x,\theta)$. 
	%	Let $z_{i,j}$ be the point of $\Xi_n$ with coordinates $(ih,jh)$. 
	The elements of $ P_\nn -  G_\nn$ on the row associated to $p=x_\jj$ are different from zero only when $q\in N(p)$.  If we call $z_{p,a,b} =  (P_\nn)_{p,p+\bm e_1 +b\bm e_2}
	-
	(Q_\nn)_{p,p+\bm e_1 +b\bm e_2}$, then
	\begin{align*}
	|z_{p,0,0}| 
	\le&
	\left|B_1A(p)(B_1)^T - \frac 1{h^2}\int_{T_{1,p}\cup T_{4,p}}B_1A(B_1)^T\right|
	+
	\left|B_2A(p)(B_2)^T - \frac 1{h^2}\int_{T_{2,p}\cup T_{5,p}}B_2A(B_2)^T\right|\\
	& +
	\left|B_3A(p)(B_3)^T -
	\frac 1{h^2}\int_{T_{3,p}\cup T_{6,p}}B_3A(B_3)^T\right|\\
	=&
	\left|\frac 1{h^2}\int_{T_{1,p}\cup T_{4,p}}B_1(A(p)-A(x))(B_1)^Tdx\right|
	+
	\left|\frac 1{h^2}\int_{T_{2,p}\cup T_{5,p}}B_2(A(p)-A(x))(B_2)^Tdx\right|\\
	& +
	\left|
	\frac 1{h^2}\int_{T_{3,p}\cup T_{6,p}}B_3(A(p)-A(x))(B_3)^Tdx\right|
	\\
	& \le 4\omega_A(h\sqrt 2) + \omega_A(h\sqrt 2) + \omega_A(h\sqrt 2)   = 6\omega_A(h\sqrt 2),
	\end{align*}
	\begin{align*}
	|z_{p,0,1}| 
	\le&
	\left|\frac 12B_3A(p)(B_1)^T - \frac 1{h^2}\int_{T_{6,p}}B_3A(B_1)^T\right|
	+
	\left|\frac 12B_1A(p)(B_3)^T - \frac 1{h^2}\int_{T_{1,p}}B_1A(B_3)^T\right|\\
	=&
	\left|\frac 1{h^2}\int_{T_{6,p}}B_3(A(p)-A(x))(B_1)^Tdx\right|
	+
	\left|\frac 1{h^2}\int_{T_{1,p}}B_1(A(p)-A(x))(B_3)^Tdx\right|
	\\
	& \le \omega_A(h\sqrt 2) + \omega_A(h\sqrt 2)   = 2\omega_A(h\sqrt 2),
	\end{align*}
	and analogous computations show that $ |z_{p,1,0}|, |z_{p,0,-1}|, |z_{p,-1,0}|$ are also bounded by $2\omega_A(h\sqrt 2)$. Moreover,
	\begin{align*}
	|z_{p,1,-1}| 
	\le&
	\left|\frac 12B_2A(p)(B_3)^T - \frac 1{h^2}\int_{T_{2,p}}B_2A(B_3)^T\right|
	+
	\left|\frac 12B_3A(p)(B_2)^T - \frac 1{h^2}\int_{T_{3,p}}B_3A(B_2)^T\right|\\
	=&
	\left|\frac 1{h^2}\int_{T_{2,p}}B_2(A(p)-A(x))(B_3)^Tdx\right|
	+
	\left|\frac 1{h^2}\int_{T_{3,p}}B_3(A(p)-A(x))(B_2)^Tdx\right|
	\\
	& \le \frac 12\omega_A(h\sqrt 2) + \frac 12\omega_A(h\sqrt 2)   = \omega_A(h\sqrt 2),
	\end{align*}
	and a similar argument shows that $ |z_{p,-1,1}|$ is also bounded by $\omega_A(h\sqrt 2)$. Since every row of $ P_\nn -  G_\nn$ has at most 7 non-zero elements and they are all bounded in absolute value by $6 \omega_A(h\sqrt 2)$, we conclude that
	\[
	\| P_\nn -  G_\nn\|_2
	\le \sqrt{
		7n^2 \cdot 36 \omega_A(h\sqrt 2)^2
	}
	\le 18 n \omega_A(h\sqrt 2)^2 = o(n)
	\]
	and using again  \textbf{Z2}, we obtain that $ P_\nn -  G_\nn$ is zero-distributed. Since $\ms{ G}$ has GLT symbol $k(x,\theta)$,
	we conclude that
	\[
	\ms{ S} = \ms{ G} + \{ P_\nn -  G_\nn \}_n + \ms{ Z} \GLT k(x,\theta).
	\]
	
	$ $\\
	
	If we now assume that $A$ is an $L^1$ function, then we can find a sequence $A_m$ of continuous functions such that $\|A-A_m\|_1 \le 2^{-m}$, where 
	\[
	\|C\|_1 = \sum_{i,j} \|c_{i,j}\|_1 = \int_{(0,1)^2}B_1|C|(B_1)^T. 
	\]
	If we define $r_{a,b,m}$ like in (\ref{r_FE}) with $A_m$ instead of $A$, and $k_m(x,\theta)$ like in (\ref{k_FE}) with $r_{a,b,m}$ instead of $r_{a,b}$, then we get $k_m\to k$ in $L^1$. Moreover, if $\ms{ S^{(m)}}$ is defined as above, but with $A_m$ instead of $A$, then from the previous analysis, we know that $\ms{ S^{(m)}}\GLT k_m$. The difference
	\[
	\ms{ S^{(m)}} - \ms{ S} = 
	\ms{ P^{(m)}}- \ms{ P}+
	\ms{ Z^{(m)}} - \ms{ Z}
	\]
	presents two zero-distributed sequences $\ms{ Z^{(m)}}$ and $\ms{ Z}$, so we need to analyse the other two sequences. Notice that for every measurable set $U\cu [0,1]^2$ and every indices $i,j$ we know that 
	\[
	\left| \int_{U}B_iA(B_j)^T 
	-
	\int_{U}B_iA_m(B_j)^T 
	\right|
	\le B_1\left[
	\int_{U}
	\left|A
	-
	A_m 
	\right|\right]
	(B_1)^T,
	%\le 4 \|A-A_m\|_1 \le 4\cdot 2^{-m}. 
	\]
	but $A-A_m$ is also $L^1$, so given $\ve$ there exists a $\delta$ such that $\mu(U)<\delta$ implies that 
	\[B_1
	\int_{U}
	\left|A
	-
	A_m 
	\right|(B_1)^T\le \ve.
	\]
	If $\mu(T_p)=3h^2\le \delta$, 
	then
	we can bound the 1 Schatten norm of $ P_\nn^{(m)} - P_\nn$ by the sum of the absolute values of their elements, so 
	\begin{align*}
	\| P_\nn^{(m)} - P_\nn\|_1 &\le
	\sum_{p,q\in (0,1)^2\cap \Xi_n} |( P_\nn^{(m)} - P_\nn)_{p,q}| \\
	&\le 
	\sum_{p\in (0,1)^2\cap \Xi_n} 
	\sum_{q\in N(p)} |( P_\nn^{(m)} - P_\nn)_{p,q}|\\
	&\le \frac{1}{h^2}
	\sum_{p\in (0,1)^2\cap \Xi_n}
	\sum_{q\in N(p)} 
	6
	B_1 \int_{T_{p}}\left|A
	-
	A_m 
	\right| (B_1)^T
	\\
	&\le \frac{42}{h^2}
	\sum_{p\in (0,1)^2\cap \Xi_n}
	B_1 \int_{T_{p}}\left|A
	-
	A_m 
	\right| (B_1)^T
	\\
	&\le 3\frac{42}{h^2}
	\|A-A_m\|_1.
	\end{align*}
	Using   \textbf{ACS 4}, we obtain that $\{ P_\nn^{(m)}\}_n\acs\{ P_\nn\}_n$ and $\{ S_\nn^{(m)}\}_n\acs\{ S_\nn\}_n$. We conclude that $\ms { S}\GLT k$.\\
	
	When $A$ is Hermitian, we can prove that $ P_\nn$ is Hermitian. In fact
	\[
	( P_\nn)_{p,q} = \int_{(0,1)^2} (\nabla \psi_{p,n})^TA\nabla \psi_{q,n} =
	\ol{ \int_{(0,1)^2} (\nabla \psi_{p,n})^T\ol A\nabla \psi_{q,n}} =
	\ol{ \int_{(0,1)^2} (\nabla \psi_{q,n})^T\ol A\nabla \psi_{p,n}} =
	\ol {( P_\nn)_{q,p}}.
	\]
	Since $\ms{ S} = \ms{ P} +\ms{ Z}$ and from (\ref{FE_Z}), we know that $\|\wt{Z}_\nn\|_2 = o(n)$, we can apply \textbf{GLT 2} and conclude that $\ms { S}\sim_\lambda k$.
\end{proof}

\subsection{Problem on Sub-domains}
Let us now consider a closed Peano-Jordan measurable set $\om\cu[0,1]^2$ with positive measure.	
Consider the problem (\ref{weak.problem.dFE.triangle}) on $\om$, where now $A,\bm b,c$ are $L^1$ functions defined on $\om$. 
When we apply a $P_1$ discretization. The resulting  matrices   form a sequence equivalent to a  reduced GLT sequence that descends from the square case. In particular, we can prove the following theorem.
\begin{theorem}\label{FE_P1_subdomain:symbol}
	Given a closed Peano-Jordan measurable set $\Omega\cu[0,1]^2$ with positive measure. 
	Let $\wt A$, $\wt {\bm b}$ and $\wt c$ be extensions of $A$, $\bm b$ and $c$ to $(0,1)^2$, obtained by setting $\wt a_{i,j}(z) = \wt{b_j}(z)=\wt c(z)=0$ outside $\om$ for every $i,j$. Moreover, let $\wt k$ be the symbol described in Theorem \ref{FE_symbol_square} referred to the problem with coefficients $\wt A$, $\wt{\bm b}$, $\wt c$, and denote $k = \wt k|_{\om^\circ}$.	
	If $S^{\om}_{\nn}$ is the matrix resulting from the $P_1$ discretization using the grid $\Xi_{n}( \om)$, then
	\[
	\{ S^{\om}_{\nn}\}_n\svs k
	\]
	and if $A$ is Hermitian for every $x\in \om$, then $k$ is also a spectral symbol for $\{ S^{\om}_{\nn}\}_n$.
\end{theorem}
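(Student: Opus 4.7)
The strategy is to derive the symbol from its counterpart on the full square via Theorem \ref{FE_symbol_square} and then restrict to the subdomain using the operator $R_{\om^\circ}$, reconciling the two natural index sets through Lemma \ref{Different_Grids}.

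Let $\wt S_\nn$ denote the stiffness matrix on $(0,1)^2$ built from the extended coefficients $\wt A$, $\wt{\bm b}$, $\wt c$. Theorem \ref{FE_symbol_square} then gives $\{\wt S_\nn\}_n \GLT \wt k$. Since $\wt A$, $\wt{\bm b}$, $\wt c$ coincide with $A$, $\bm b$, $c$ on $\om$ and vanish outside, and the integrand defining $(\wt S_\nn)_{p,q}$ is supported on $T_p \cap T_q$, we have $(\wt S_\nn)_{p,q} = (S^\om_\nn)_{p,q}$ whenever $p,q \in \Xi_n(\om)$, because in that case $T_p \cap T_q \subseteq \om$. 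Hence $S^\om_\nn$ is exactly the principal submatrix of $\wt S_\nn$ indexed by $\Xi_n(\om)$.

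To present this as a reduction in the sense of Section \ref{Res_Exp}, pick a measurable set $\Gamma_n \subseteq [0,1]^2$ whose grid points $\bm i/(\nn+\uu) \in \Gamma_n$ are exactly the points of $\Xi_n(\om)$---for instance, a disjoint union of tiny balls of radius $h/4$ centred at each point of $\Xi_n(\om)$. Then $R_{\Gamma_n}(\wt S_\nn) = S^\om_\nn$ under the lexicographic ordering, with $d_n^{\Gamma_n} = |\Xi_n(\om)|$. The critical estimate is $d_n^{\om \triangle \Gamma_n} = o(N(\nn))$: any grid point in $\om \setminus \Gamma_n$ corresponds to some $p \in \Xi_n \cap \om$ with $T_p \not\subseteq \om$, which forces $d(p, \partial \om) \leq \sqrt{2}\,h$, and Corollary \ref{preliminaries_near_border_points_chi} bounds the number of such points by $o(N(\nn))$, while $\Gamma_n \setminus \om$ contains no grid points at all by construction. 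Applying Lemma \ref{reduced_symbol} to $\{\wt S_\nn\}_n$ gives $R_{\om^\circ}(\{\wt S_\nn\}_n) \sim_{GLT}^{\om^\circ} k$, and hence $\sim_\sigma k$ by \textbf{GLT\,1} of the reduced theory. Lemma \ref{Different_Grids} then transfers this across the grid change and delivers $\{S^\om_\nn\}_n = \{R_{\Gamma_n}(\wt S_\nn)\}_n \sim_\sigma k$.

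For the spectral claim, when $A$ is Hermitian split $S^\om_\nn = P^\om_\nn + Z^\om_\nn$, where $P^\om_\nn$ collects the $A$-contributions (and is therefore Hermitian) and $Z^\om_\nn$ collects the $\bm b$- and $c$-contributions. Running the same three-step reduction on the Hermitian sequence $\{\wt P_\nn\}_n \GLT \wt k$ (which by \textbf{GLT\,1} also satisfies $\sim_\lambda \wt k$), and invoking the Hermitian clause of Lemma \ref{Different_Grids}, yields $\{P^\om_\nn\}_n \sim_\lambda k$. The same $L^1$-based estimate that produced \eqref{FE_Z} in the proof of Theorem \ref{FE_symbol_square}, now with integrals over $\om$ instead of $(0,1)^2$, gives $\|Z^\om_\nn\|_2 = o(n) = o(|\Xi_n(\om)|^{1/2})$. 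A standard Hermitian-plus-small-Frobenius-perturbation theorem for spectral symbols (the non-GLT analogue of \textbf{GLT\,2}, available in the GLT literature) then concludes $\{S^\om_\nn\}_n \sim_\lambda k$. The principal obstacle throughout is the careful bookkeeping needed to identify $S^\om_\nn$, indexed by the combinatorial set $\Xi_n(\om)$, with the restriction $R_{\Gamma_n}(\wt S_\nn)$: the index sets $\Xi_n(\om)$ and $\Xi_n \cap \om^\circ$ differ precisely on a layer of grid points within $O(h)$ of $\partial \om$, and Lemma \ref{Different_Grids} is exactly the tool designed to absorb such near-boundary discrepancies.
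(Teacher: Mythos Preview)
Your proposal is correct and follows essentially the same route as the paper: identify $S^\om_\nn$ as the principal submatrix of the full-square stiffness matrix $\wt S_\nn$, control the discrepancy between $\Xi_n(\om)$ and $\Xi_n\cap\om^\circ$ via the near-boundary estimate, and invoke Lemma~\ref{Different_Grids} to transfer the symbol from $R_{\om^\circ}(\wt S_\nn)$. The only cosmetic difference is in the spectral part, where the paper applies the $P+Z$ split and the perturbation argument at the $\om^\circ$ level (using reduced \textbf{GLT\,2} together with Lemma~\ref{norm_reduction} to bound $\|R_{\om^\circ}(Z_\nn)\|_2$), whereas you carry out the split directly at the $\Xi_n(\om)$ level and appeal to the general Hermitian-plus-small-Frobenius perturbation result; both are valid and equivalent in substance.
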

\begin{proof}
	Let $ S_{\nn}$ be the matrix resulting from the $P_1$ discretization of the problem with coefficients $\wt A$, $\wt{\bm b}$, $\wt c$ on the square $[0,1]^2$ using the grid $\Xi_{n}$.
	We want to show that $R_{\Xi_n(\om)}( S_{\nn}) =  S^{\om}_{\nn}$, that is, for every pair of points $(p,q)$ in $\Xi_{n}(\om)$, we prove $( S_{\nn})_{p,q} = (S^{\om}_{\nn})_{p,q}$.
	From (\ref{FE}), the equations for the two quantities are
	\begin{align*}
	(S_\nn)_{p,q} &= \int_{(0,1)^2} (\nabla \psi_{p,n})^T\wt A\nabla \psi_{q,n} + (\nabla \psi_{p,n})^T\wt{\bm b} \psi_{q,n} + \wt c\psi_{p,n}\psi_{q,n},	\\
	(S_\nn^\om)_{p,q} &= \int_{\om^\circ} (\nabla \psi_{p,n})^T A\nabla \psi_{q,n} + (\nabla \psi_{p,n})^T{\bm b} \psi_{q,n} +  c\psi_{p,n}\psi_{q,n},
	\end{align*}
	but $p\in\Xi_n(\om)$ so $T_p^\circ \cu \om^\circ$ and therefore the two quantities are the same since $A,\bm b$ and $c$ coincide with   $\wt A$, $\wt{\bm b}$ and $\wt c$ on $\om$. In this case, it may happen that $\Xi_n(\om) \subsetneq \Xi_n\cap \om^\circ$ since $\om$ may not be convex, but the two sets are actually almost the same. In fact,
	\[
	E_n := (\Xi_n\cap \om^\circ) \setminus \Xi_n(\om) = \set{ p\in \Xi_n\cap \om^\circ | T_p\not\cu \om  },
	\] 
	so any point $p\in E_n$ is at distance at most $h_n=1/(n+1)$ from the boundary $\partial \om$, and using \autoref{preliminaries_near_border_points_chi}, we conclude
	\[
	E_n \cu \set{ p\in \Xi_n | d(p,\partial \om)\le h_n  } \implies |E_n| \le s_n^{K_{h_n}} = o(N(\nn)).
	\]
	As a consequence,
	\[
	s_n^{\om^\circ \triangle \Xi_n(\om)} = |\set{p\in \Xi_n\cap \om^\circ| p \not\in \Xi_n(\om) }| =|E_n| =o(N(\nn))
	\]
	and \autoref{Different_Grids} assures us that it is sufficient to prove the thesis for $R_{\om^\circ}(S_\nn)$.

	Using the definition of reduced GLT, we can  affirm that
	\[
	\{R_{\om^\circ}(S_\nn)\}_n\GLT^{\om^\circ} k \implies \{ R_{\om^\circ}(S_\nn)\}_n\svs k.
	\]
	
	If we now assume that $A$ is an Hermitian matrix for every $x\in \om$, then automatically also $\wt A$ is Hermitian for every $x$, since it is equal to $A$ or it is the zero matrix. From the proof of \autoref{FE_symbol_square}, we know that   $ S_{\nn} =   P_{\nn} +  Z_{\nn}$, where $ P_{\nn}$ is Hermitian and $\| Z_{\nn}\|_2 = o(n)$. If we call $P^{\om^\circ}_{\nn} = R_{\om^\circ}( P_{\nn})$ and $Z^{\om^\circ}_{\nn} = R_{\om^\circ}( Z_{\nn})$
	then we find that $R_{\om^\circ}(S_\nn) = P^{\om^\circ}_{\nn} + Z^{\om^\circ}_{\nn}$, $P^{\om^\circ}_{\nn}$ is Hermitian and for Lemmas \ref{ker_reduction}, \ref{norm_reduction} and \ref{dimension_chi},
	\[
	\|Z^{\om^\circ}_{\nn}\|_2 = \|R_{\om^\circ}( Z_{\nn})\|_2 \le  \| Z_{\nn}\|_2 = o(n) \implies  \|Z^{\om^\circ}_{\nn}\|_2 = o\left(\sqrt{s_n^\om}\right).
	\]
	Notice that $\{P_{\nn}^{\om^\circ}\}_k\es k$, so we can use \textbf{GLT 2}, and conclude that  
	\[
	\{R_{\om^\circ}(S_\nn)\}_k\sim_{\lambda} k.
	\]
	
\end{proof}
Notice that $k(x,\theta)$ has the same form described in (\ref{k_FE}), (\ref{r_FE}), where $A$ is now defined only on $\om$. 

\subsection{P1 on Non-Regular Grids}

When the domain $\om$ is compact, but presents an irregular boundary, or when we want to focus the discretization to particular points in the domain, the adopted grids  are usually adapted to the problem geometry.  We can find examples of such grids and relative spectral analyses already in \cite{GLT-bookII} for $\om=[0,1]^d$ and in \cite{Beck} for more general domains. 
In both cases, the grids taken into account were produced starting from a regular grid and by applying an invertible function $\phi$. For clarity sake, we start from a smooth ($C^1$) embedding $\vf$ that maps $\om$ into $[0,1]^d$, and if $D=\vf(\om)$, then we  call the inverse $\phi := \vf^{-1}:D\to \om$. Notice that $\vf$ is in particular a closed locally Lipschitz map, so $D$ is a  compact set in $[0,1]^d$ and it is still Peano-Jordan measurable. We can thus induce a discretization grid on $\om$ given by $\phi(D\cap \Xi_n)$ for every $n$. 

{% figure with straight lines
}

\begin{figure}[H]
	\centering
	\begin{tikzpicture}
	% f(x,y) = (x^2+y^3, x^2-y^3)
	%\draw[-] (0,0) -- (5,0) node[right] {$(1,0)$} -- (0,5) node[left] {$(0,1)$} -- (0,0)  node[below] {$(0,0)$};
	\foreach \x in {0,...,10}{
		\foreach \y in {0,...,10}{
			%			\node[circle,fill=black,scale=.3] (b-\x-\y) at (6+\x/2,\y/2) {};
			%					\node[circle,fill=black,scale=.3] (a-\x-\y) at (\x/20 + \y*\y*\y/160,-\x/40 + \y*\y*\y/320) {};

			\ifthenelse{\cnttest{(\x-5)*(\x-5) + (\y-5)*(\y-5)}<{25}}
			{\node[circle,fill=black,scale=.3] (b-\x-\y) at (6+\x/2,\y/2) {};
				\node[circle,fill=black,scale=.3] (a-\x-\y) at ({6*(\x+1)/sqrt(40+(\x+1)*(\x+1)+\y*\y)-2}, {6*\y/sqrt(40+(\x+1)*(\x+1)+\y*\y)}) {};}
			{\ifthenelse{ \cnttest{(\x-5)*(\x-5) + (\y-5)*(\y-5)}={25}}
				{\node[circle,fill=red,scale=.3](b-\x-\y) at (6+\x/2,\y/2) {};
					\node[circle,fill=red,scale=.3] (a-\x-\y) at ({6*(\x+1)/sqrt(40+(\x+1)*(\x+1)+\y*\y)-2}, {6*\y/sqrt(40+(\x+1)*(\x+1)+\y*\y)}) {};}
				{\node[circle,fill=black,fill opacity=.2,scale=.3](b-\x-\y) at (6+\x/2,\y/2) {};	
					\node[circle,fill opacity=0,scale=.3] (a-\x-\y) at ({6*(\x+1)/sqrt(40+(\x+1)*(\x+1)+\y*\y)-2}, {6*\y/sqrt(40+(\x+1)*(\x+1)+\y*\y)}) {};}							
			}

			%			\node[circle,fill=black,scale=.3] (a-\x-\y) at ({6*(\x+1)/sqrt(40+(\x+1)*(\x+1)+\y*\y)-2}, {6*\y/sqrt(40+(\x+1)*(\x+1)+\y*\y)}) {};
			%\ifthenelse{\cnttest{\x+\y}<{10} \AND \NOT \x =0 \AND \NOT \y = 0}
			%					{\node[circle,fill=black,scale=.3] (a) at (\x/2,\y/2) {};}
			%					{\ifthenelse{ \x =0 \OR \y = 0 \OR \cnttest{\x+\y}={10}}
			%						{\node[circle,fill=red,scale=.3](a) at (\x/2,\y/2) {};}
			%						{\node[circle,fill=black,fill opacity=.2,scale=.3](a) at (\x/2,\y/2) {};}	
			%	}}}
	}}
	
	%		\draw [domain=0:1, samples=40] 	plot ({\x^3-3*\x}, {3*\x^2-9} );
	
	\foreach \X in {0,...,9}{
		\foreach \Y in {0,...,10}{
			\ifthenelse{\NOT \cnttest{(\X-5)*(\X-5) + (\Y-5)*(\Y-5)}>{25} \AND \NOT \cnttest{(\X-4)*(\X-4) + (\Y-5)*(\Y-5)}>{25}}
			{
				\draw [domain=0:1, samples=10] 	plot (  {seg_x(\X,\Y,\X+1,\Y,\x)} , {seg_y(\X,\Y,\X+1,\Y,\x)} );
				
				%\draw let \n1={int(\X+1)} in (a-\X-\Y) -- (a-\n1-\Y); 
				\draw let \n1={int(\X+1)} in (b-\X-\Y) -- (b-\n1-\Y);}
			{\draw[opacity=.2] let \n1={int(\X+1)} in (b-\X-\Y) -- (b-\n1-\Y); 
			}
	}}
	
	\foreach \X in {0,...,10}{
		\foreach \Y in {0,...,9}{
			\ifthenelse{\NOT \cnttest{(\X-5)*(\X-5) + (\Y-5)*(\Y-5)}>{25} \AND \NOT \cnttest{(\X-5)*(\X-5) + (\Y-4)*(\Y-4)}>{25}}
			{
				\draw [domain=0:1, samples=10] 	plot (  {seg_x(\X,\Y,\X,\Y+1,\x)} , {seg_y(\X,\Y,\X,\Y+1,\x)} );
				%\draw let \n1={int(\Y+1)} in (a-\X-\Y) -- (a-\X-\n1); 
				\draw let \n1={int(\Y+1)} in (b-\X-\Y) -- (b-\X-\n1);}
			{\draw[opacity=.2] let \n1={int(\Y+1)} in (b-\X-\Y) -- (b-\X-\n1);
			}
			
	}}
	
	\foreach \X in {1,...,10}{
		\foreach \Y in {0,...,9}{
			\ifthenelse{\NOT \cnttest{(\X-5)*(\X-5) + (\Y-5)*(\Y-5)}>{25} \AND \NOT \cnttest{(\X-6)*(\X-6) + (\Y-4)*(\Y-4)}>{25}}
			{
				\draw [domain=0:1, samples=10] 	plot (  {seg_x(\X,\Y,\X-1,\Y+1,\x)} , {seg_y(\X,\Y,\X-1,\Y+1,\x)} );
				%					\draw let \n1={int(\Y+1)},\n2={int(\X-1)} in (a-\X-\Y) -- (a-\n2-\n1); 
				\draw let \n1={int(\Y+1)},\n2={int(\X-1)} in (b-\X-\Y) -- (b-\n2-\n1);}
			{\draw[opacity=.2] let \n1={int(\Y+1)},\n2={int(\X-1)} in (b-\X-\Y) -- (b-\n2-\n1);
			}			
	}}
	
	\draw [red,thick] (11,2.5) arc (0:360:2.5cm) ;
	
	\draw [red, thick,  domain=0:360, samples=80] 
	plot ({6*(5*cos(\x) + 5+1)/sqrt(40+(5*cos(\x) + 5+1)^2+(5*sin(\x) + 5)^2)-2}, {6*(5*sin(\x) + 5)/sqrt(40+(5*cos(\x) + 5+1)^2+(5*sin(\x) + 5)^2)} );
	
	\draw [->] (3.5,3) to [out=30,in=150] (5.5,3);
	\draw [->] (5.5,2) to [out=210,in=330] (3.5,2);
	
	\node () at (4.5,3.5) {$\vf$};
	\node () at (4.5,1.5) {$\phi$};

	\node[circle,fill=red,scale=.3](c) at (a-4-1) {};
	\node[circle,fill=red,scale=.3](c) at (a-5-1) {};
	\node[circle,fill=red,scale=.3](c) at (a-6-1) {};
	\node[circle,fill=red,scale=.3](c) at (a-7-1) {};
	\node[circle,fill=red,scale=.3](c) at (a-3-1) {};
	\node[circle,fill=red,scale=.3](c) at (a-8-2) {};
	\node[circle,fill=red,scale=.3](c) at (a-1-3) {};
	\node[circle,fill=red,scale=.3](c) at (a-1-4) {};
	\node[circle,fill=red,scale=.3](c) at (a-1-5) {};
	\node[circle,fill=red,scale=.3](c) at (a-1-6) {};
	\node[circle,fill=red,scale=.3](c) at (a-1-7) {};
	\node[circle,fill=red,scale=.3](c) at (a-9-3) {};
	\node[circle,fill=red,scale=.3](c) at (a-9-4) {};
	\node[circle,fill=red,scale=.3](c) at (a-9-5) {};
	\node[circle,fill=red,scale=.3](c) at (a-9-6) {};
	\node[circle,fill=red,scale=.3](c) at (a-9-7) {};
	\node[circle,fill=red,scale=.3](c) at (a-2-8) {};
	\node[circle,fill=red,scale=.3](c) at (a-3-9) {};
	\node[circle,fill=red,scale=.3](c) at (a-4-9) {};
	\node[circle,fill=red,scale=.3](c) at (a-5-9) {};
	\node[circle,fill=red,scale=.3](c) at (a-6-9) {};
	\node[circle,fill=red,scale=.3](c) at (a-7-9) {};
	
	\node[circle,fill=red,scale=.3](c) at (b-4-1) {};
	\node[circle,fill=red,scale=.3](c) at (b-5-1) {};
	\node[circle,fill=red,scale=.3](c) at (b-6-1) {};
	\node[circle,fill=red,scale=.3](c) at (b-7-1) {};
	\node[circle,fill=red,scale=.3](c) at (b-3-1) {};
	\node[circle,fill=red,scale=.3](c) at (b-8-2) {};
	\node[circle,fill=red,scale=.3](c) at (b-1-3) {};
	\node[circle,fill=red,scale=.3](c) at (b-1-4) {};
	\node[circle,fill=red,scale=.3](c) at (b-1-5) {};
	\node[circle,fill=red,scale=.3](c) at (b-1-6) {};
	\node[circle,fill=red,scale=.3](c) at (b-1-7) {};
	\node[circle,fill=red,scale=.3](c) at (b-9-3) {};
	\node[circle,fill=red,scale=.3](c) at (b-9-4) {};
	\node[circle,fill=red,scale=.3](c) at (b-9-5) {};
	\node[circle,fill=red,scale=.3](c) at (b-9-6) {};
	\node[circle,fill=red,scale=.3](c) at (b-9-7) {};
	\node[circle,fill=red,scale=.3](c) at (b-2-8) {};
	\node[circle,fill=red,scale=.3](c) at (b-3-9) {};
	\node[circle,fill=red,scale=.3](c) at (b-4-9) {};
	\node[circle,fill=red,scale=.3](c) at (b-5-9) {};
	\node[circle,fill=red,scale=.3](c) at (b-6-9) {};
	\node[circle,fill=red,scale=.3](c) at (b-7-9) {};

	\end{tikzpicture}
	\label{}
	\caption{}
\end{figure}

%	We now discretize the following diffusion problem using $P_1$ finite elements on a compact domain $\om\cu \f R^2$ with positive measure, $\mu(\partial\om) = 0$ and grids described by the function $\phi$.			

We now discretize the diffusion problem (\ref{problem.dFE.om}) using modified $P_1$ finite elements on a compact domain $\om\cu \f R^2$ with positive measure, $\mu(\partial\om) = 0$ and grids described by the function $\phi$.			
\begin{equation}\tag{\ref{problem.dFE.om}}
\left\{\begin{array}{ll}
\displaystyle-\sum_{i,j=1}^2\frac{\partial}{\partial x_j}\Bigl(a_{i,j}\frac{\partial u}{\partial x_i}\Bigr)+\sum_{i=1}^2b_i\frac{\partial u}{\partial x_i}+cu=f, & \mbox{in }\om^\circ,\\
u=0, & \mbox{on }\partial\om,
\end{array}\right.
\end{equation}
where  
$a_{i,j}$, $b_i$, $c$ and $f$ are given complex-valued $L^1$ functions defined on $\om$.			

The basis function we consider on $\om$ are produced from the classical $P_1$ elements by composition with the map $\vf$. In fact, if $p\in \Xi_n(D)\cu \Xi_n\cap D$ and $p' = \phi(p)$ we can define the basis function associated to $p'$ as
\[
\xi_{p',n} := \psi_{p,n} \circ \vf.
\]
Note that the support of $\xi_{p',n}$ is $T_{p'}:= \phi(T_p)$ and $T_p\cu D\iff T_{p'}\cu \om$. In the classical $P_1$ setting, we consider a basis function for each point in $\Xi(D)$, so here we will produce a function $\xi_{p',n}$ only for the points $p'\in \phi(\Xi(D))$, and we call the set of such points
\[
\Xi(\om):= \phi(\Xi(D)) = \phi\left( \set{p\in \Xi_n| T_p\cu D} \right) = \set{ p'\in \phi(\Xi_n \cap D) |  T_{p'}\cu \om  }.
\]

The weak form of the problem (\ref{weak.problem.dFE.triangle}) leads us to a  linear system similar to the ones already considered. In fact,  if we substitute $u=\sum_{p'\in \Xi_n(\om)} u_{p'} \xi_{p',n}$ and $w=\xi_{q',n}$ into problem (\ref{weak.problem.dFE.triangle}), then  we obtain the relation 
\begin{equation}
\label{FE_general_domain}
\sum_{p'\in \Xi_n(\om)} s^\om_{q',p'}u_{p'}=f_{q'}, \qquad 
s^\om_{q',p'}=\int_{\om^\circ} (\nabla \xi_{p',n})^TA\nabla \xi_{q',n} + (\nabla \xi_{p',n})^T\bm b \xi_{q',n} + c\xi_{p',n}\xi_{q',n}, \qquad f_{q'}  = \int_{\om^\circ} f\xi_{q',n}
\end{equation}
for every $q'$ in $\Xi_n(\om)$. 
Sorting the relations in lexicographical order with respect to the appearance of $\vf(q')$ in the grid $\Xi_n$, we obtain a linear system 
$S_n^\om \bm u_n = \bm f_n$ of size $|\Xi_n(\om)|=|\Xi_n(D)|$.

%The analysis for this case descends from the fact that $\xi_{p,n} $ and $\wt \xi_{p,n} = \psi_{\vf(p)}\circ\vf$ are asymptotically equivalent when $p\in \Xi'_n(\om,D)$, so we can approximate the solution through the linear system arising from the $P_1$ method applied to the regular grid  $\Xi_n$ on $D$ for which we already know the symbol. In particular,
The analysis of this particular instance descends from the fact that
we can find  opportune coefficients for the problem  (\ref{weak.problem.dFE.triangle})  on the domain $D$ so that
the linear system arising from the $P_1$ method applied to the regular grid  $\Xi_n(D)$ coincides with $S_n^\om$.
Consider in fact the problem
\begin{equation}\label{problem.dFE.mapped}
\left\{\begin{array}{ll}
\displaystyle-\sum_{i,j=1}^2\frac{\partial}{\partial x_j}\Bigl(\wt a_{i,j}\frac{\partial u}{\partial x_i}\Bigr)
+\sum_{i=1}^2\wt b_i\frac{\partial u}{\partial x_i}+\wt cu
=f, & \mbox{in }D^\circ,\\
u=0, & \mbox{on }\partial D,
\end{array}\right. 
\end{equation}
and its weak form
\begin{equation}\label{weak.problem.dFE.mapped}
\displaystyle
\int_{D^\circ} (\nabla u)^T\wt A\nabla w  
+ (\nabla u)^T\wt{\bm b}w 
+ u\wt c w
= \int_{D^\circ} fw,\qquad \forall w\in H_0^1(D)
.
\end{equation}
where  
\[
\wt A(\bm x) :=  J_\phi^{-1}(\bm x)A(\phi(\bm x))J_\phi^{-T}(\bm x)  |\det J_\phi(\bm x)|,\]\[ 
\wt {\bm b}(\bm x) := J_\phi^{-1}(\bm x)\bm b(\phi(\bm x)) |\det J_\phi(\bm x)|,
\qquad 
\wt c(\bm x) := c(\phi(\bm x))  |\det J_\phi(\bm x)|
\]
are $L^1$ functions on $D$. Applying the $P_1$ method to this problem we obtain the relations 
\begin{equation}
\label{FE_general_domain_mapped}
\sum_{p\in \Xi_n(D)} s^D_{q,p}\wt u_p=\wt f_q, \qquad 
s^D_{q,p}=\int_{D^\circ} (\nabla \psi_{p,n})^T\wt A\nabla \psi_{q,n}
+ (\nabla \psi_{p,n})^T\wt{\bm b} \psi_{q,n}
+ \psi_{p,n}\wt c  \psi_{q,n}
, \qquad \wt f_q  = \int_{D^\circ} f\psi_{q,n}
\end{equation}
for every $q\in \Xi_n(D)$, that give rise to the system $S_n^D \wt{\bm u} =\wt{\bm f}_n$ of size $|\Xi_n(D)|$. 
Notice that if $p',q' \in \Xi_n(\om)$ such that $p'=\phi(p)$ and $q'=\phi(q)$, then
\begin{align*}
\int_{\om^\circ} (\nabla  \xi_{p',n})^TA\nabla \xi_{{q'},n}&=\int_{D^\circ} (\nabla_{\bm x} \psi_{p}(\bm x))^T J_{\phi}(\bm x)^{-1} A(\phi(\bm x)) J_{\phi}(\bm x)^{-T}  \nabla_{\bm x} \psi_{q}(\bm x) |\det J_{\phi}(\bm x)| {{\rm d}}\bm x\\
&= \int_{D^\circ} (\nabla_{\bm x} \psi_{p}(\bm x))^T \wt A(\bm x)  \nabla_{\bm x} \psi_{q}(\bm x)  {{\rm d}}\bm x,\\
\int_{\om^\circ} (\nabla  \xi_{p',n})^T\bm b \xi_{{q'},n}&=\int_{D^\circ} (\nabla_{\bm x} \psi_{p}(\bm x))^T J_{\phi}(\bm x)^{-1} \bm b(\phi(\bm x)) \psi_{q}(\bm x) |\det J_{\phi}(\bm x)| {{\rm d}}\bm x\\
&= \int_{D^\circ} (\nabla_{\bm x} \psi_{p}(\bm x))^T \wt {\bm b}(\bm x) \psi_{q}(\bm x)  {{\rm d}}\bm x,\\
\int_{\om^\circ}  \xi_{p',n}c \xi_{{q'},n}&=\int_{D^\circ}  \psi_{p}(\bm x) c(\phi(\bm x)) \psi_{q}(\bm x) |\det J_{\phi}(\bm x)| {{\rm d}}\bm x\\
&= \int_{D^\circ} \psi_{p}(\bm x) \wt c(\bm x) \psi_{q}(\bm x)   {{\rm d}}\bm x,
\end{align*}
so 
comparing \autoref{FE_general_domain_mapped} and \autoref{FE_general_domain} we conclude that $s_{p,q}^D = s_{p',q'}^\om$ for every $p,q\in \Xi(D)$ and therefore $S_n^\om = S_n^D$. 
The symbols of the sequence can be easily computed from \autoref{FE_P1_subdomain:symbol}.

\section{Future Work}\label{FW}

We have introduced and thoroughly analysed the space of Reduced GLT, showing how they can prove useful in applications. 
Reduced GLT sequences have already been applied on discretizations of fractional PDEs on generic domains, and they can also be applied straightforwardly on graph structures, as showed in \cite{Bianchi}. 

Following the lead of the classical GLT sequences, the next step is to generalize the space of Reduced GLT to the case of block sequences, studied in \cite{GLT-bookIII,GLT-bookIV}, in order to tackle also systems of PDEs and high-order approximations on generic domains.

\section{Acknowledgment}

This preprint has not undergone peer review or any post-submission improvement or corrections. The Version of Record of this article is published in BIT Numerical Mathematics, and is available online at https://doi.org/10.1007/s10543-021-00896-7.

\end{document}